\newcommand{\vfi}{\varphi}
\newcommand{\p}{{\bf p}}
\newtheorem{theorem}{\sc Theorem}[section]
\newtheorem{thm}[theorem]{\sc Theorem}
\newtheorem{lem}[theorem]{\sc Lemma}
\newtheorem{prop}[theorem]{\sc Proposition}
\newtheorem{cor}[theorem]{\sc Corollary}
\newtheorem{rem}[theorem]{\sc Remark}
\title[non-abelian Tensor Square of $p$-groups]{The exponent of the non-abelian tensor square and related constructions of $p$-groups}
\author[Bastos]{R.  Bastos}
\author[de Melo]{E. de Melo}
\author[Gon\c calves]{N. Gon\c calves}
\address{Departamento de Matem\'atica, Universidade de Bras\'ilia,
Brasilia-DF, 70910-900 Brazil}
\email{(Bastos) bastos@mat.unb.br, (de Melo) emerson@mat.unb.br and (Gon\c calves) nathalianogueira208@yahoo.com.br}
\author[Monetta]{C. Monetta }
\address{Dipartimento di Matematica, Universit\`a di Salerno, Via Giovanni Paolo II, 132 - 84084 - Fisciano (SA), Italy}
\email{cmonetta@unisa.it}
\subjclass[2010]{20D15, 20E06, 20J06}
\keywords{Finite $p$-groups; Non-abelian tensor square of groups}
\begin{document}

\maketitle

\begin{abstract}
Let $G$ be a finite $p$-group. In this paper we obtain bounds for the exponent of the non-abelian tensor square $G \otimes G$ and of $\nu(G)$, which is a certain extension of $G \otimes G$ by $G \times G$. In particular, we bound $\exp(\nu(G))$ in terms of $\exp(\nu(G/N))$ and $\exp(N)$ when $G$ admits some specific normal subgroup $N$.
We also establish bounds for $\exp(G \otimes G)$ in terms of $\exp(G)$ and either the nilpotency class or the coclass of the group $G$, improving some existing bounds. 
\end{abstract}

\section{Introduction}

The non-abelian tensor square $G \otimes G$ of a group $G$ was introduced by Brown and Loday \cite{BL} following works of Miller \cite{Miller} and Dennis \cite{Dennis}. It is defined to be the group generated by all symbols $\; \, g\otimes h, \; g,h\in G$, subject to the relations
\[
gg_1 \otimes h = ( g^{g_1}\otimes h^{g_1}) (g_1\otimes h) \quad
\mbox{and} \quad g\otimes hh_1 = (g\otimes h_1)( g^{h_1} \otimes
h^{h_1})
\]
for all $g,g_1, h,h_1 \in G$, where  we write $x^y$ for the conjugate $y^{-1} x y$ of $x$ by $y$, for any elements $x, y \in G$. In \cite{BL}, Brown and Loday showed that the third homotopy group of the suspension of an Eilenberg-MacLane space $K(G,1)$ satisfies $\pi_3(SK(G,1)) \cong \mu(G),$ where $\mu(G)$ denotes the kernel of the derived map $\rho': G \otimes G \to G'$, given by  $g \otimes h \mapsto [g,h]$. According to \cite[Proposition 2.8]{NR2}, the sequence
$$
1 \rightarrow \Delta(G)\rightarrow \mu(G) \rightarrow H_2(G)\rightarrow 1,
$$
is exact, where $\Delta(G) = \langle g \otimes g \mid g \in G\rangle$ and $H_2(G)$ is the second homology group of the group $G$. When $G$ is finite, the Schur multiplier of $G$, denoted by $M(G)$, is defined to be $M(G)=H_2(G)$. Here $\rho'$  corresponds to the derived map $\kappa$ of \cite{BL}. 

We observe that the defining relations of the non-abelian tensor square can be viewed as abstractions of commutator relations; thus in \cite{NR1}, Rocco considered the following construction (see also Ellis and Leonard \cite{EL}). Let $G$ be a group and let $\varphi : G \rightarrow G^{\varphi}$ be an isomorphism ($G^{\varphi}$ is a copy of $G$, where $g \mapsto g^{\varphi}$, for all $g \in G$). Define the group $\nu(G)$ to be \[ \nu (G):= \langle 
G \cup G^{\varphi} \ \vert \ [g_1,{g_2}^{\varphi}]^{g_3}=[{g_1}^{g_3},({g_2}^{g_3})^{\varphi}]=[g_1,{g_2}^{\varphi}]^{{g_3}^{\varphi}},
\; \ g_i \in G \rangle .\]

The group $\nu(G)$ can be viewed as a special semi-direct product $\nu(G) \cong ((G \otimes G)\rtimes G) \rtimes G$ (see \cite[Section 2]{EL} for more details). The motivation for studying $\nu(G)$ is the commutator connection: indeed, the map  $\Phi: G \otimes G \rightarrow [G, G^{\varphi}]$,
defined by $g \otimes h \mapsto [g , h^{\varphi}]$, for all $g, h \in G$, is an isomorphism  \cite[Proposition 2.6]{NR1}. From now on we identify the non-abelian tensor square $G \otimes G$ with the subgroup $[G,G^{\varphi}]$ of $\nu(G)$. The group $\nu(G)$ provides an interesting computational tool in the study of the non-abelian tensor square of groups (see for instance \cite{BdMGN,BFM,BN08, EL,M09,NR2}). 

Our purpose is to achieve bounds for the exponent of the non-abelian tensor square and related constructions of finite $p$-groups. It is worth to mention that the bounds obtained for the exponent of the group $\nu(G)$ can be read as bounds for the exponents of its sections, and the other way around. Therefore, for the sake of completeness we summarize the relationship between the exponent of $\nu(G)$ and its sections in Remark \ref{rem:nu(G)}. 

Let $p$ be a prime. A finite $p$-group $G$ is said to be {\em powerful} if $p>2$ and $G' \leq G^p$, or $p=2$ and $G' \leq G^4$. A more general class of $p$-groups is the following. We say that a finite $p$-group is {\em potent} if $p>2$ and $\gamma_{p-1}(G) \leq G^p$, or $p=2$ and $G' \leq G^4$. Note that the family of potent $p$-groups contains all powerful $p$-groups. Recall that a subgroup $N$ of $G$ is potently embedded in $G$ if $[N,_{p-2}G]\leq N^p$ for $p$ odd, or $[N, G]\leq N^4$ for $p=2$ ($N$ is powerfully embedded in $G$ if $[N,G]\leq N^p$ for $p$ odd, or $[N, G]\leq N^4$ for $p=2$). More information about finite powerful and potent  $p$-groups can be found in \cite{D} and in \cite{JJ}, respectively. 

Let $p$ be a prime and $r$ a positive integer. We define the integer $m=m(p,r)$ by $m(p,r)=(p-1)p^{r-1}$ for $p$ odd and $m(2,r)=2^{r+2}$.  Recall that the coclass of a finite $p$-group $G$ of order $p^n$ and nilpotency class $c$ is defined to be $r(G)=n-c$. Let $G$ be a $p$-group of coclass $r=r(G)$ and nilpotency class $c$, where $c\geq 2p^r$ if $p$ is odd or $c\geq 2^{r+3}$ if $p=2$. It is well-known that in this case $\gamma_{i+s}(G)=\gamma_i(G)^p$ for all $i\geq m(p,r)$ and $s=(p-1)p^{d}$ with $0\leq d \leq r-1$ if $p$ is odd or $s=2^d$ with $0\leq d \leq r+1$ if $p=2$ (cf. \cite[Section  6.3]{LM}). In particular, note that $\gamma_m(G)$ is powerful. It is worth to mention that powerful $p$-groups satisfy some analogous power-commutator condition. Actually, if $G$ is a powerful $p$-group, then $\gamma_i(G)$ is powerfully embedded in $G$, that is, $\gamma_{i+1}(G) \leq \gamma_i(G)^{\p}$, for every $i\geq 1$. (cf. \cite[Corollary 11.6]{Khukhro}). Here and in the sequel, $\p$ denotes the prime $p$ if $p$ is odd and $4$ if $p=2$.

In \cite{M09}, Moravec proved that $[G,G^{\varphi}]$ and $\nu(G)'$ are powerfully embedded in $\nu(G)$; moreover, the exponent $\exp(\nu(G)')$ divides $\exp(G)$. Later, in \cite{BdMGN} it was shown that if $G$ is a finite potent $p$-group, then the subgroups $\gamma_k(\nu(G))$ and $[G,G^{\varphi}]$ are potently embedded in $\nu(G)$; and $\exp(\nu(G))$ divides $\p \cdot \exp(G)$. Furthermore, if $p \geq 5$ and $G$ is a powerful $p$-group, then $\exp(\nu(G)) = \exp(G)$. In some sense the next result can be viewed as an extension of these results and also as an application for $p$-groups of coclass $r$.   

\begin{thm}\label{thmA}
Let $p$ be a prime and $G$ a $p$-group. Let $m$ and $s$ be positive integers such that $m\geq s$ and   suppose that $\gamma_{i+s}(G)=\gamma_i(G)^p$ for every $i \geq m$. Then
\begin{itemize}
    \item[(a)] $\gamma_{i+s+1}(\nu(G))=\gamma_{i+1}(\nu(G))^p$ for $i> m$;
    \item[(b)] if $p$ is odd, then  $\exp(\gamma_{m+1}(\nu(G)))$ divides $\exp(\gamma_{m}(G))$;
    \item[(c)] if $p=2$ and $\gamma_m(G)$ is powerful, then $\exp(\gamma_{m+1}(\nu(G)))$ divides $\exp(\gamma_{m}(G)).$
\end{itemize}
\end{thm}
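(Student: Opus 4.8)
The plan is to reduce everything to the lower central series of $\nu(G)$ through the decomposition $\gamma_{k+1}(\nu(G)) = \gamma_{k+1}(G)\,\gamma_{k+1}(G)^{\varphi}\,R_k$, where $R_k := [G,G^{\varphi},\underbrace{G,\dots,G}_{k-1}]$ is the $(k-1)$-fold commutator of $[G,G^{\varphi}]$ with $G$ (so $R_1=[G,G^{\varphi}]$ and $R_{k+1}=[R_k,G]$). First I would establish this decomposition by induction on $k$, using the defining relations of $\nu(G)$. The essential point is that for $t\in[G,G^{\varphi}]$ one has $[t,g]=[t,g^{\varphi}]$ for all $g\in G$, so that $[G,G^{\varphi}]$ is normal, $R_k\leq[G,G^{\varphi}]$ for every $k$, and commuting with $G$ or with $G^{\varphi}$ produces the same subgroup. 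Combined with the three-subgroups lemma, this lets me show that the mixed commutators $[\gamma_{k}(G),G^{\varphi}]$ and $[\gamma_k(G)^{\varphi},G]$ are absorbed into $R_k$ modulo $\gamma_{k+1}(G)\gamma_{k+1}(G)^{\varphi}$, which closes the induction.

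Granting the decomposition, part (a) splits into two tasks. The $G$- and $G^{\varphi}$-parts are immediate: applying the hypothesis at index $i+1$ (legitimate since $i>m$ forces $i+1\geq m$) gives $\gamma_{i+s+1}(G)=\gamma_{i+1}(G)^{p}$, and likewise for the $\varphi$-copy. The real content is the tensor part, namely the identity $R_{i+s}=R_i^{\,p}$ for $i>m$. Here I would transfer the power-commutator relation of $G$ into $[G,G^{\varphi}]$: using the commutator identities in $\nu(G)$ one rewrites $R_{k}$, modulo deeper terms of the filtration $\{R_j\}$, in terms of $[\gamma_{k}(G),G^{\varphi}]$, and then the equality $\gamma_{i+s}(G)=\gamma_i(G)^{p}$ is pushed through the (bi)additive behaviour of these commutators. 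To actually split the $p$-th power of the product $\gamma_{i+1}(G)\gamma_{i+1}(G)^{\varphi}R_i$ as $\gamma_{i+1}(G)^{p}\gamma_{i+1}(G)^{\varphi p}R_i^{\,p}$ I would invoke powerfulness: since the hypothesis forces $\gamma_m(G)$ to be powerful, the relevant tail of the lower central series of $\nu(G)$ is powerfully embedded by Moravec's theorem and its potent refinement, and in a powerful $p$-group $p$-th powers of products split and $\{x^{p}\}$ is a subgroup.

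For parts (b) and (c) I would combine (a) with the powerful structure at level $m$. From the decomposition, $\exp(\gamma_{m+1}(\nu(G)))$ is governed by $\exp(\gamma_{m+1}(G))$, $\exp(\gamma_{m+1}(G)^{\varphi})$ and $\exp(R_m)$; the first two divide $\exp(\gamma_m(G))$ trivially, so the point is $\exp(R_m)\mid\exp(\gamma_m(G))$. Since $\gamma_m(G)$ is powerful (assumed outright when $p=2$, automatic from the relation when $p$ is odd), I would apply the known bound $\exp(\nu(\,\cdot\,)')\mid\exp(\,\cdot\,)$ for powerful groups to the relevant powerful section, and use that powerfulness of the tail prevents the exponents of the successive layers $R_j/R_{j+1}$ from accumulating: the exponent of $R_m$ is then the maximum of the layer exponents, each dividing $\exp(\gamma_m(G))$. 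The separation of the cases $p$ odd and $p=2$ reflects exactly the point at which $\gamma_m(G)$ being powerful is needed to split $2$-nd powers.

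I expect the main obstacle to be the tensor part $R_k=[G,G^{\varphi},{}_{k-1}G]$ in (a). Unlike the $G$- and $G^{\varphi}$-parts it is not a lower central term of $G$, so the relation $\gamma_{i+s}(G)=\gamma_i(G)^{p}$ does not apply to it directly; one must move the relation across the non-abelian tensor structure, where $p$-th powers and commutators interact nontrivially. Controlling this interaction seems to require a Hall–Petrescu/commutator-collection estimate together with the powerful embedding of $\gamma_m(G)$ to guarantee that the $p$-th-power map behaves additively on the relevant factors, and the $p=2$ case will need the extra hypothesis precisely to make these splittings valid.
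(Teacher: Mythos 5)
Your skeleton is the same as the paper's: your decomposition $\gamma_{k+1}(\nu(G))=\gamma_{k+1}(G)\,\gamma_{k+1}(G^{\varphi})\,R_k$ is exactly Proposition~\ref{gammanu} (by Lemma~\ref{basic.nu}(ii) your iterated commutator $R_k$ coincides with $[\gamma_k(G),G^{\varphi}]$), and you correctly locate the difficulty in the tensor part. But at the two places where the actual work happens, your proposal substitutes citations that do not apply. For part (a), you propose to split $p$-th powers by asserting that the tail of the lower central series of $\nu(G)$ is powerfully embedded ``by Moravec's theorem and its potent refinement.'' Those results require the ambient group $G$ itself to be powerful (resp.\ potent); here only $\gamma_m(G)$ is, and $\gamma_{i+1}(\nu(G))$ contains the mixed commutators $[\gamma_i(G),G^{\varphi}]$ whose second entry runs over all of $G^{\varphi}$, so it is not of the form $[H,H^{\varphi}]$ or $\nu(H)'$ for any powerful group $H$. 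There is nothing to quote: the power-commutator structure of this tail is precisely what must be proved, and the paper proves it (Proposition~\ref{prop.conditions}) by an absorption mechanism your outline lacks. Namely, Hall's formula (Theorem~\ref{thm.Hall}) gives, for $x\in\gamma_i(G)$, that $[x^p,y^{\varphi}]\equiv[x,y^{\varphi}]^p$ modulo $\gamma_2(L)^p\gamma_p(L)$; the error terms are then estimated so that $[\gamma_i(G)^p,G^{\varphi}]\leq \gamma_{i+1}(\nu(G))^p\,[\gamma_i(G)^p,G^{\varphi},\nu(G)]$, and Lemma~\ref{normalinc} (if $N\leq M[N,G]N^p$ then $N\leq M$) removes the commutator factor. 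This is where the dichotomy ``$p$ odd or $i>m$'' enters, and it cannot be replaced by ``biadditivity'' plus ``powers split in powerful groups.'' Note also that (a) is an equality: the reverse inclusion $\gamma_{i+1}(\nu(G))^p\leq\gamma_{i+s+1}(\nu(G))$ is not automatic either and needs its own collection argument.

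The second gap is in (b)/(c). You bound $\exp(\gamma_{m+1}(\nu(G)))$ by the maximum of the exponents of the three factors, and $\exp(R_m)$ by a ``layer'' argument. Neither step is justified: the exponent of a product of normal subgroups does not divide the least common multiple of their exponents in general, and $\exp(R_m)\mid\exp(\gamma_m(G))$ is the theorem itself, not a consequence of $\exp(\nu(\,\cdot\,)')\mid\exp(\,\cdot\,)$ for powerful groups (again, that statement needs the whole group to be powerful). The paper's route avoids any powerful embedding inside $\nu(G)$: it uses only the \emph{easy} inclusion from (a), namely $\gamma_{i+1}(\nu(G))^p\leq\gamma_{i+s+1}(\nu(G))$ for all $i\geq m$ and every prime (part (2) of Proposition~\ref{prop.conditions}). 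Powerfulness of $\gamma_m(G)$ is used only inside $G$, via Lemma~\ref{power}: writing $p^t=\exp(\gamma_m(G))$, iterating the hypothesis gives $\gamma_{m+ts}(G)=\Pi_t(\gamma_m(G))=\gamma_m(G)^{p^t}=1$, hence $\gamma_{m+ts+1}(\nu(G))=1$ by Proposition~\ref{gammanu}, and then $\gamma_{m+1}(\nu(G))^{p^t}\leq\Pi_t(\gamma_{m+1}(\nu(G)))\leq\gamma_{m+ts+1}(\nu(G))=1$. Replacing your layer argument with this chain makes (b) and (c) immediate once (a) is proved correctly, and it makes transparent that the powerfulness hypothesis is needed only to identify $\gamma_m(G)^{p^t}$ with the iterated $p$-power subgroup of $\gamma_m(G)$.
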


Despite the fact that the coclass of $\nu(G)$ grows faster than the nilpotency class of the involved group $G$ (see Remark \ref{prop.coclass}, below), Theorem \ref{thmA} (a) shows that the group $\nu(G)$ still satisfies a close power-commutator condition, as $G$ does. At the same time, according to Theorem \ref{thmA} (b) and (c), we deduce that the behaviour of the exponent $\exp(\gamma_{m+1}(\nu(G)))$ depends only on $\exp(\gamma_m(G))$.

Later, we obtain bounds for the exponent of $\nu(G)$ in terms of some specific normal subgroups of $G$.  

\begin{thm}\label{thm.potent}
Let $p$ be a prime and $N$ a normal subgroup of a $p$-group $G$. 
\begin{itemize}
    \item[(a)] If $N$ is potent or $\gamma_{p}(N)=1$, then $\exp(\nu(G))$ divides $\p\cdot \exp(\nu(G/N))\cdot\exp(N)$.
    \item[(b)] If $\gamma_{p-2}(N) \leq N^p$,  then $\exp(\nu(G))$ divides $\exp(\nu(G/N))\cdot\exp(N)$. 
\end{itemize}
\end{thm}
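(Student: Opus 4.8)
\emph{Plan.} The engine of the proof is the epimorphism $\pi\colon \nu(G)\twoheadrightarrow\nu(G/N)$ induced by the projection $G\to G/N$, which exists because $\nu$ is functorial (the defining relations of $\nu$ are preserved by homomorphisms). Writing $K=\ker\pi$, every $x\in\nu(G)$ satisfies $\pi(x)^{\exp(\nu(G/N))}=1$, so $x^{\exp(\nu(G/N))}\in K$; hence $\exp(\nu(G))$ divides $\exp(\nu(G/N))\cdot\exp(K)$, and both parts reduce to bounding $\exp(K)$. Concretely, it suffices to show that $\exp(K)$ divides $\p\cdot\exp(N)$ in case (a) and $\exp(N)$ in case (b). As a consistency check, the case $N=G$ gives $\nu(G/N)=1$ and $K=\nu(G)$, so the two reductions specialize exactly to the bounds recalled from \cite{BdMGN} (including $\exp(\nu(G))=\exp(G)$ for powerful $G$ with $p\ge 5$, which falls under case (b), since then $\gamma_{p-2}(G)\le G'\le G^{p}$).

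Next I would describe $K$ explicitly. Being the normal closure of $N\cup N^{\varphi}$ in $\nu(G)$, its determination rests on the basic commutator identities of \cite{NR1}; for instance $[n,g^{\varphi}]^{h}=[n^{h},(g^{h})^{\varphi}]$ shows that $[N,G^{\varphi}]$ and $[G,N^{\varphi}]$ are normal in $\nu(G)$ and contained in $[G,G^{\varphi}]$, while $n^{g^{\varphi}}\in N[N,G^{\varphi}]$ and $(n^{\varphi})^{g}\in N^{\varphi}[G,N^{\varphi}]$. This should give
\[
K \;=\; N\,N^{\varphi}\,[N,G^{\varphi}]\,[G,N^{\varphi}].
\]
Putting $L:=[N,G^{\varphi}][G,N^{\varphi}]=K\cap[G,G^{\varphi}]$, and identifying $L$ (under $[G,G^{\varphi}]\cong G\otimes G$) with the kernel of the natural map $G\otimes G\to (G/N)\otimes(G/N)$, one gets $K/L\cong N\times N$, so $\exp(K/L)$ divides $\exp(N)$.

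The final and most delicate step is to bound $\exp(K)$ itself. The crude extension estimate $\exp(K)\mid\exp(L)\exp(N)$ is far too weak (for $N=G$ it would demand $\exp(G\otimes G)\mid\p$), so instead I would run a Hall--Petrescu collection argument on a general element $x=n\,m^{\varphi}\,\ell\in K$ and feed in the hypotheses on $N$. The power conditions on $N$ are exactly what forces the mixed generators $[n,g^{\varphi}]$ and $[g,n^{\varphi}]$ --- the images of $N\otimes G$ and $G\otimes N$ in $G\otimes G$ --- to have order governed by $\exp(N)$, after which the $p$-th-power map on $K$ acquires defect at most $\p$ in case (a) and no defect in case (b), where $\gamma_{p-2}(N)\le N^{p}$ absorbs the loss. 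Within (a), the potent case is channelled through the embedding and exponent results for $\nu(N)$ from \cite{BdMGN}, whereas the hypothesis $\gamma_p(N)=1$ makes $N$ regular (class $<p$) and thereby sharpens its power structure.

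\emph{Main obstacle.} I expect the hard point to be bounding the exponent of the mixed subgroups $[N,G^{\varphi}]$ and $[G,N^{\varphi}]$ by $\exp(N)$ rather than by $\exp(G)$: Moravec's bound $\exp(\nu(G)')\mid\exp(G)$ only controls them through $\exp(G)$, and extracting $\exp(N)$ demands genuine use of the potency and power hypotheses on $N$ via commutator calculus in $\nu(G)$. This is the same step that separates the defect $\p$ of case (a) from its disappearance in case (b).
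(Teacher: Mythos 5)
Your reduction is exactly the paper's: the epimorphism $\widetilde{\pi}\colon\nu(G)\to\nu(G/N)$ has kernel $K=NN^{\varphi}[N,G^{\varphi}][G,N^{\varphi}]$ (Lemma \ref{lem.general}), so both parts come down to proving $\exp(K)\mid\p\cdot\exp(N)$ in case (a) and $\exp(K)\mid\exp(N)$ in case (b); your description of $K$ and of $K/L\cong N\times N$ is also correct. But at that point the proposal stops being a proof. Bounding the orders of the generators of $K$ (your ``mixed generators have order governed by $\exp(N)$'') can never bound $\exp(K)$ by itself --- a finite $p$-group generated by elements of order $p$ can have arbitrarily large exponent --- and the phrase ``the $p$-th power map on $K$ acquires defect at most $\p$'' restates the conclusion rather than arguing for it. What is missing is precisely the mechanism the paper builds. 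First, the structural identity (Proposition \ref{propN}(a))
\[
\gamma_s(K)=\gamma_s(N)\,\gamma_s(N^{\varphi})\,[\gamma_{s-1}(N),N^{\varphi}]\,[N,\gamma_{s-1}(N^{\varphi})]\qquad(s\ge 2),
\]
which shows that from weight two onwards $G$ disappears entirely and the lower central series of $K$ is controlled by $N$ alone; this is what converts the hypotheses on $N$ (potent, $\gamma_p(N)=1$, or $\gamma_{p-2}(N)\le N^p$) into potency-type power-commutator conditions on $K$ itself, via Hall's collection formula and Lemma \ref{normalinc}. Second, the omega-subgroup lemma of Fern\'andez-Alcober, Gonz\'alez-S\'anchez and Jaikin-Zapirain (Lemma \ref{lem.exponent}): since $K$ is generated by conjugates of elements of $N\cup N^{\varphi}$, all of order dividing $\exp(N)=p^e$, one has $K=\Omega_e(K)$, and the power-commutator condition on $K$ then yields $\exp(K)\mid p^{e+1}$ (resp.\ $2^{e+2}$) in case (a), and $\exp(K)\mid p^{e}$ in case (b), where $\gamma_{p-2}(N)\le N^p$ makes $K$ itself potent. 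Neither ingredient appears in your plan; your ``main obstacle'' paragraph correctly locates the difficulty but does not resolve it.

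One concrete step of your plan is moreover unworkable: channelling the potent case of (a) through the results on $\nu(N)$ from \cite{BdMGN} cannot succeed, because the image of the natural map $\nu(N)\to\nu(G)$ is $NN^{\varphi}[N,N^{\varphi}]$, a proper subgroup of $K$ in general; it misses exactly the mixed commutators $[N,G^{\varphi}]$ and $[G,N^{\varphi}]$ that you yourself single out as the hard part. Similarly, regularity of $N$ (from $\gamma_p(N)=1$) is not the property that does the work: what is needed is a power structure on $K$, whose nilpotency class can exceed that of $N$ by one --- this excess is the very source of the factor $\p$ in case (a). The paper handles $\gamma_p(N)=1$ by noting that Proposition \ref{propN}(a) forces $\gamma_{p+1}(K)=1$, and then applying Lemma \ref{lem.exponent} exactly as above.
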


In \cite{BJR}, Brown, Johnson and Robertson described the non-abelian tensor square of $2$-groups of maximal class (i.e., groups of coclass $1$). In particular, if $G$ is a $2$-group of maximal class, then $\exp([G,G^{\varphi}])$ divides $\exp(G)$ (cf. \cite[Propositions 13--15]{BJR}). Consequently, $\exp(\nu(G))$ divides $\exp(G)^2$. In \cite{Moravec.cc}, Moravec proved that if $G$ is a $p$-group of maximal class, then $\exp(M(G))$ divides $\exp(G)$.

\begin{cor} \label{cor.maximal}
Let $p$ be a prime and $G$ a $p$-group of maximal class. Then $\exp(\nu(G))$ divides $\p^2\cdot \exp(G)$.
\end{cor}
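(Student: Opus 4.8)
The plan is to deduce the corollary from Theorem \ref{thm.potent}(a) applied to the single normal subgroup $N=\gamma_2(G)=G'$. The point of this choice is that in a $p$-group of maximal class one has $|G/\gamma_2(G)|=p^2$, and since $G$ is $2$-generated the quotient cannot be cyclic; hence $G/\gamma_2(G)\cong C_p\times C_p$ for \emph{every} prime $p$. Thus the error term $\exp(\nu(G/N))$ in Theorem \ref{thm.potent}(a) is controlled by a single universal computation, independent of the order of $G$, while $\exp(N)=\exp(\gamma_2(G))$ obviously divides $\exp(G)$. This is what will keep the final bound linear in $\exp(G)$, rather than producing the $\exp(G)^2$ that a na\"ive induction on the order would give.

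First I would record that $\exp\!\big(\nu(C_p\times C_p)\big)=\p$. Since $C_p\times C_p$ is abelian, the defining relations of $\nu$ force $[G,G^{\vfi}]$ to be central, so $\nu(C_p\times C_p)$ is nilpotent of class $2$, generated by elements of order $p$ (the images of $G$ and $G^{\vfi}$), with $[G,G^{\vfi}]$ of exponent $p$. For $p$ odd the class-$2$ expansion $(xy)^p=x^py^p[y,x]^{\binom p2}$ together with $p\mid\binom p2$ gives exponent $p=\p$; for $p=2$ the term $[y,x]^{\binom 22}=[y,x]$ survives and yields exponent $4=\p$. Granting the hypotheses of Theorem \ref{thm.potent}(a) for $N=\gamma_2(G)$, one then obtains
\[
\exp(\nu(G))\ \big|\ \p\cdot \exp\!\big(\nu(G/\gamma_2(G))\big)\cdot\exp(\gamma_2(G))\ \big|\ \p\cdot\p\cdot\exp(G)=\p^2\cdot\exp(G),
\]
which is exactly the asserted bound.

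It therefore remains to verify that $N=\gamma_2(G)$ satisfies the hypothesis of Theorem \ref{thm.potent}(a), i.e. that $N$ is potent or $\gamma_p(N)=1$; this is the heart of the matter. For $p=2$ the derived subgroup of a $2$-group of maximal class (dihedral, quaternion or semidihedral) is cyclic, hence abelian, so $\gamma_2(N)=\gamma_p(N)=1$ and there is nothing more to do. For $p$ odd one must show that $\gamma_2(G)$ is potent, namely $\gamma_{p-1}(\gamma_2(G))\le\gamma_2(G)^p$, and here I would split according to the class. Since $\gamma_k(\gamma_2(G))\le\gamma_{2k}(G)$, one has $\gamma_{p-1}(\gamma_2(G))\le\gamma_{2(p-1)}(G)$, so potency is automatic whenever $2(p-1)>c$, that is for all groups of order at most $p^{\,2p-2}$; in the remaining large-class range one invokes the coclass power-commutator relations recalled in the introduction, namely $\gamma_i(G)^p=\gamma_{i+p-1}(G)$ for $i\ge p-1$, to rewrite the high terms of the lower central series as $p$-th powers and thereby absorb $\gamma_{p-1}(\gamma_2(G))$ into $\gamma_2(G)^p$. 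Establishing this containment uniformly---bridging the small regular groups, the narrow gap $2p-2\le c<2p$ not yet covered by the stated coclass relations, and controlling the degree of commutativity of $\gamma_2(G)$ so that its class stays small---is the main obstacle of the proof; once it is in place the corollary follows from the display above.
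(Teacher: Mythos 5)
Your overall strategy is viable and close in spirit to the paper's: both proofs reduce via Theorem~\ref{thm.potent}(a) to a normal subgroup with a tiny quotient whose $\nu$-group has exponent $\p$. Your supporting computations are also correct: $G/\gamma_2(G)\cong C_p\times C_p$ for maximal class, $\exp(\nu(C_p\times C_p))=\p$ (class two, generated by elements of order $p$, regular when $p$ is odd), and the $p=2$ case is fine since $\gamma_2(G)$ is cyclic. The problem is that for odd $p$ the hypothesis of Theorem~\ref{thm.potent}(a) --- potency of $N=\gamma_2(G)$ --- is never actually established, as you yourself concede. Your case analysis covers $c\leq 2p-3$, where $\gamma_{p-1}(\gamma_2(G))\leq\gamma_{2(p-1)}(G)=1$, and appeals to the coclass relations for $c\geq 2p$; the latter case can in fact be closed in one line, since Theorem~\ref{coclassp} with $m=s=p-1$ gives $\gamma_{p-1}(\gamma_2(G))\leq\gamma_{2p-2}(G)=\gamma_{p-1}(G)^p\leq\gamma_2(G)^p$ (no control of the degree of commutativity is needed). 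But the groups with $c\in\{2p-2,\,2p-1\}$, i.e.\ of order $p^{2p-1}$ or $p^{2p}$, fall into neither case: they are too large for the vanishing argument and their class is below the threshold $c\geq 2p^r$ required by the coclass power-commutator relations quoted in the paper. For these groups you offer no argument at all, so the proof is genuinely incomplete.

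The paper avoids this hole by a different choice of $N$: a maximal subgroup, so that $G/N\cong C_p$ and $\exp(\nu(C_p))=\p$. For $p=2$ one takes the cyclic maximal subgroup; for odd $p$ and $|G|\leq p^{p+1}$ every maximal subgroup has order at most $p^p$, hence class at most $p-1$, so $\gamma_p(N)=1$; and for $|G|\geq p^{p+2}$ the fundamental subgroup $G_1=C_G(\gamma_2(G)/\gamma_4(G))$ is potent, because $[G_1,G_1]=[G_1,\gamma_2(G)]\leq\gamma_4(G)$ and therefore $\gamma_{p-1}(G_1)\leq[\gamma_4(G),{}_{p-3}\,G_1]\leq\gamma_p(G)=G_1^p$ by \cite[Corollary 3.3.6]{LM}. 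The crucial point is that the identity $\gamma_p(G)=G_1^p$ holds for \emph{every} maximal class group of order at least $p^{p+2}$, with no lower bound on the class, so the paper's three cases exhaust all groups. To rescue your argument in the missing range you would need an analogous power-commutator statement for the derived subgroup (e.g.\ $\gamma_2(G)^p=\gamma_{p+1}(G)$ already for $|G|\geq p^{p+2}$), which is not among the results recalled in the paper and would have to be proved or cited separately.
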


In the literature, the exponent of several sections of the group $\nu(G)$, like $G \otimes G$, $\mu(G)$ and $M(G)$, has been investigated (see \cite{Sambonet} and the references given there). In \cite{Ellis}, Ellis proved that if $G$ is a $p$-group of class $c\geq 2$, then $\exp([G,G^{\varphi}])$ divides $\exp(G)^{c-1}$. In \cite{Moravec.Schur}, Moravec showed that if $G$ is a $p$-group of class $c\geq 2$, then $\exp(M(G))$ divides $\exp(G)^{2\lfloor \log_2(c)\rfloor}$. Later, in \cite{Sambonet17}, Sambonet proved that if $G$ is a $p$-group of class $c\geq 2$, then $\exp(M(G))$ divides $\exp(G)^{\lfloor \log_{p-1}(c)\rfloor+1}$ if $p>2$ and $\exp(M(G))$ divides $2^{\lfloor \log_2(c)\rfloor} \cdot \exp(G)^{\lfloor \log_2(c)\rfloor+1}$ if $p=2$. In \cite{APT}, Antony et al. demonstrated that $\exp(M(G))$ divides $\exp(G)^{1 + \lceil \log_{p-1}(\frac{c+1}{p+1})\rceil}$ if $p \leq c$, improving all the previous bounds.

Our contribution is a bound for the $\exp([G,G^{\varphi}])$ which, in the realm of Remark \ref{rem:nu(G)}, improves the bound obtained in \cite{APT}.

\begin{thm}\label{corlog}
Let $p$ be a prime and $G$ a $p$-group of nilpotency class $c$. Let $n=\lceil \log_{p}(c+1)\rceil$. Then $\exp([G,G^{\varphi}])$ divides $\exp{(G)}^n$.
\end{thm}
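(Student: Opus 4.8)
The plan is to bound $\exp([G,G^\varphi])$ by induction on the nilpotency class $c$, exploiting the identification $G\otimes G\cong [G,G^\varphi]$ inside $\nu(G)$ and passing to a quotient of $G$ that roughly halves—or, more precisely, $p$-folds down—the class at each step. The target exponent of the logarithm, $\lceil\log_p(c+1)\rceil$, strongly suggests that a single inductive step should replace a group of class $c$ by one of class about $\lfloor c/p\rfloor$ (or $\lfloor (c-1)/p\rfloor$), so that after $n$ steps the class drops below $2$ and $[G,G^\varphi]$ becomes tame. The natural subgroup to quotient by is $\gamma_{k}(G)$ for a suitable $k\approx c/p$ chosen so that $\gamma_k(G)$ lies deep enough in the lower central series to be controlled.

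\smallskip

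First I would set up the commutator calculus in $\nu(G)$. Writing elements of $[G,G^\varphi]$ as products of basic commutators $[g,h^\varphi]$, I would record the multilinear expansion rules coming from the defining relations, together with Moravec's fact (cited above) that $[G,G^\varphi]$ is powerfully embedded in $\nu(G)$, so that taking $p$-th powers interacts predictably with the lower central series of $\nu(G)$. The key quantitative input should be a commutator-collection estimate: for a generator $[g,h^\varphi]$ one shows that its $\exp(G)$-th power, i.e. $[g,h^\varphi]^{\exp(G)}$, lies in a term $\gamma_{t}(\nu(G))$ with $t$ substantially larger than the starting depth—the gain being a factor of roughly $p$ in depth per power. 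Iterating this collection process $n$ times should push the relevant power into $\gamma_{c+2}(\nu(G))\cap[G,G^\varphi]$, and since $G$ has class $c$ one checks that this intersection is trivial, yielding $[g,h^\varphi]^{\exp(G)^n}=1$.

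\smallskip

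For the inductive framing I would let $\bar G=G/\gamma_k(G)$ with $k$ chosen near $c/p+1$, apply the inductive hypothesis to $\bar G$ (which has class $k-1\approx c/p$, so exponent bound $\exp(\bar G)^{\lceil\log_p k\rceil}$ with the logarithm one smaller), and then analyse the kernel of the induced map $[G,G^\varphi]\to[\bar G,\bar G^\varphi]$. That kernel is generated by tensors involving $\gamma_k(G)$, and here the powerful/potent embedding results from the excerpt—especially the power-commutator identities and the fact that $\gamma_i$ is powerfully embedded in a powerful group—let me bound its exponent by $\exp(G)$. Combining the kernel bound with the quotient bound via the exact sequence gives the extra single factor of $\exp(G)$, which is exactly what turns $\lceil\log_p k\rceil$ into $\lceil\log_p(c+1)\rceil$.

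\smallskip

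The hard part will be making the depth-doubling (depth-$p$-folding) collection estimate precise and uniform in $p$, particularly reconciling the odd-$p$ case with the prime $2$ case, where $\mathbf p=4$ forces an extra factor in power maps and shifts the lower central series bookkeeping. I expect the main obstacle to be controlling the cross terms that arise when expanding $([g,h^\varphi])^{\exp(G)}$ by the relations in $\nu(G)$: one must verify that every correction term genuinely lands at least $p$ times deeper in $\gamma_\bullet(\nu(G))$, rather than merely one step deeper, since only the factor-of-$p$ gain produces the base-$p$ logarithm rather than a base-$2$ logarithm. Getting the index $k$ and the bookkeeping of $\lceil\cdot\rceil$ to match $\lceil\log_p(c+1)\rceil$ exactly, with no off-by-one loss, is where the argument will need the most care.
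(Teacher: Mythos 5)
Your inductive skeleton coincides with the paper's: quotient by $N=\gamma_j(G)$ with $j=\lceil (c+1)/p\rceil$, apply induction to $G/N$ (which has class $j-1$, so the logarithm drops by one), and use the exact sequence of Lemma \ref{lem.general}~(c) to reduce everything to showing that the kernel $H=[N,G^{\varphi}][G,N^{\varphi}]$ has exponent dividing $\exp(G)$. But that kernel bound is precisely where your proposal has a genuine gap, and the tools you name for it are the wrong ones. Moravec's powerful embedding of $[G,G^{\varphi}]$ in $\nu(G)$, and the fact that $\gamma_i$ is powerfully embedded in a \emph{powerful} group, do not apply: neither $G$, nor $N=\gamma_j(G)$, nor $H$ is powerful here, and no powerful-embedding statement yields $\exp(H)\mid\exp(N)$. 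Likewise, the ``depth-$p$-folding collection estimate'' you hope for (every correction term in the expansion of $[g,h^{\varphi}]^{\exp(G)}$ lands at least $p$ times deeper in the lower central series) is not what actually happens, and you are right to flag it as the main obstacle: proved along those lines the argument would not close.

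The missing idea is \emph{regularity}, forced by the choice of $j$. Since $H\leq\gamma_{j+1}(\nu(G))$ and $\nu(G)$ has class at most $c+1$ (Proposition \ref{gammanu}), one gets $\gamma_p(H)\leq\gamma_{p(j+1)}(\nu(G))\leq\gamma_{c+2}(\nu(G))=1$, because $pj\geq c+1$. So $H$ has class less than $p$, hence is a regular $p$-group, and its exponent equals the maximal order of a generator $[x,y^{\varphi}]$ with $x\in N$, $y\in G$. Hall's formula (Theorem \ref{thm.Hall}) applied to $L=\langle x,[x,y^{\varphi}]\rangle$ then has \emph{vanishing} higher correction terms, $\gamma_p(L)\leq\gamma_{pj+1}(\nu(G))=1$, rather than merely deeper ones; writing $p^e=\exp(N)$, this gives $[x,y^{\varphi}]^{p^e}\in\gamma_2(L)^{p^e}\leq[N,N^{\varphi}]^{p^e}$, and $[x,y^{\varphi}]^{p^e}=1$ outright when $p=2$. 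For odd $p$ one repeats the same computation with both entries in $N$, pushing $[N,N^{\varphi}]^{p^e}$ into $[N',N^{\varphi}]^{p^e}$ and so on; the iteration terminates because $\gamma_p(N)\leq\gamma_{pj}(G)=1$. This yields $\exp(H)\mid\exp(N)\mid\exp(G)$, after which your bookkeeping $\lceil\log_p\lceil(c+1)/p\rceil\rceil=\lceil\log_p(c+1)\rceil-1$ closes the induction, with base case $c\leq p-1$, where $j=1$ and $H=[G,G^{\varphi}]$. Without the observation that both $H$ and $N$ have class less than $p$, no amount of collection bookkeeping produces the base-$p$ logarithm.
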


Furthermore, in \cite{Moravec.cc} Moravec proved that if $G$ is a $p$-group of coclass $r$, then $\exp(M(G))$ and $\exp(G \wedge G)$ divide $\exp(G)^{r+1+2 \left \lfloor{\log_2(m-1)}\right \rfloor}$, where $m=m(p,r)$ is as defined before. Finally, we obtain the following bounds for the exponent of $[G,G^{\varphi}]$.

\begin{thm} \label{thm.cc}
Let $p$ be a prime and $G$ a $p$-group of coclass $r$.
\begin{itemize}
    \item[(a)] If $p$ is odd, then  $\exp([G,G^{\varphi}])$ divides $(\exp(G))^{r}\cdot \exp(\gamma_m(G))$, where $m=(p-1)p^{r-1}$.
    \item[(b)] If $p=2$, then $\exp([G,G^{\varphi}])$ divides $(\exp(G))^{r+3}\cdot \exp(\gamma_m(G))$, where $m=2^{r+2}$.
\end{itemize}
\end{thm}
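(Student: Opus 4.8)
The plan is to prove Theorem~\ref{thm.cc} by combining the coclass power-commutator structure of $G$ with Theorem~\ref{thmA}, controlling the exponent of $[G,G^\vfi]$ by peeling off successive lower central terms of $\nu(G)$. The starting point is the known fact (recalled in the excerpt, cf. \cite[Section 6.3]{LM}) that for a $p$-group of coclass $r$ one has $\gamma_{i+s}(G)=\gamma_i(G)^p$ for all $i\geq m=m(p,r)$, where $s=(p-1)p^{d}$ (resp.\ $s=2^d$) ranges over the allowed values. In particular $\gamma_m(G)$ is powerful, which supplies exactly the hypothesis needed to invoke Theorem~\ref{thmA}(b) and (c). So the first step is to apply Theorem~\ref{thmA} with this $m$ (and $s=(p-1)$ for $p$ odd, $s=1$ for $p=2$, say) to conclude that $\exp(\gamma_{m+1}(\nu(G)))$ divides $\exp(\gamma_m(G))$.

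\medskip

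The second and main step is to bound the exponent of the quotient $[G,G^\vfi]\,\gamma_{m+1}(\nu(G))/\gamma_{m+1}(\nu(G))$, i.e.\ to control $[G,G^\vfi]$ modulo the subgroup whose exponent we have just pinned down. Here I would use the fact that $[G,G^\vfi]$ lies in $\nu(G)'$ and that modulo $\gamma_{m+1}(\nu(G))$ the relevant section of $\nu(G)$ has bounded nilpotency class, so that a power of $\exp(G)$ annihilates it. Concretely, I would argue that the image of $[G,G^\vfi]$ in $\nu(G)/\gamma_{m+1}(\nu(G))$ has class controlled by $m$, and then invoke a class-based exponent bound (in the spirit of Theorem~\ref{corlog}, or the iterated commutator-collection estimates) to see that its exponent divides a suitable power of $\exp(G)$. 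The count of that power is where the cases split: for $p$ odd one gets the factor $(\exp(G))^{r}$, while for $p=2$ the extra collection losses at the prime $2$ produce $(\exp(G))^{r+3}$. The exponents $r$ and $r+3$ should emerge from iterating the coclass relation $\gamma_{i+s}(G)=\gamma_i(G)^p$ across the $r$ (resp.\ $r+2$) admissible values of $d$, each iteration costing one factor of $\exp(G)$, together with a small additive correction in the $p=2$ case coming from the $G'\le G^4$ convention encoded in $\p$.

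\medskip

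Finally I would assemble the two contributions via the standard fact that if $1\to A\to H\to B\to 1$ (or here the relevant subnormal filtration $[G,G^\vfi]\cap\gamma_{m+1}(\nu(G))$ and its quotient) has the outer terms of known exponent, then $\exp(H)$ divides the product of the exponents of the factors. This yields $\exp([G,G^\vfi])$ dividing $(\exp(G))^{r}\cdot\exp(\gamma_m(G))$ for $p$ odd and $(\exp(G))^{r+3}\cdot\exp(\gamma_m(G))$ for $p=2$, matching the statement. The main obstacle I anticipate is the second step: getting the \emph{sharp} power of $\exp(G)$ for the top section $[G,G^\vfi]/\gamma_{m+1}(\nu(G))$ without losing extra factors, since the coclass of $\nu(G)$ grows faster than that of $G$ (cf.\ Remark~\ref{prop.coclass}), so one cannot simply apply a coclass bound to $\nu(G)$ directly. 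The key will be to work with the lower central series of $\nu(G)$ up to level $m+1$ and exploit Theorem~\ref{thmA}(a), which guarantees that the power-commutator relation $\gamma_{i+s+1}(\nu(G))=\gamma_{i+1}(\nu(G))^p$ persists for $i>m$, so the tame structure of $G$ transfers to $\nu(G)$ precisely in the range we need.
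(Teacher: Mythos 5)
Your overall decomposition is exactly the paper's: split $[G,G^{\varphi}]$ along the kernel of the map onto the tensor square of $\bar{G}=G/\gamma_{m+1}(G)$ (this kernel lies in $\gamma_{m+1}(\nu(G))$ by Lemma~\ref{lem.general} and Proposition~\ref{gammanu}), bound the bottom piece by Theorem~\ref{thmA} and the top piece by a class-based bound, then multiply the two exponents. The bottom step of your plan is fine. The genuine gap is in your second step, namely the mechanism you propose for producing the powers $r$ and $r+3$. You say these ``should emerge from iterating the coclass relation $\gamma_{i+s}(G)=\gamma_i(G)^p$ across the $r$ (resp.\ $r+2$) admissible values of $d$, each iteration costing one factor of $\exp(G)$,'' with an extra correction at $p=2$ coming from the convention $\p$. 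This would not work: the relation $\gamma_{i+s}(G)=\gamma_i(G)^p$ holds only for $i\geq m$, so it says nothing about the section of $[G,G^{\varphi}]$ lying above $\gamma_{m+1}(\nu(G))$; its entire contribution is already compressed into the single factor $\exp(\gamma_m(G))$ via Theorem~\ref{thmA}, and the admissible values of $d$ never enter the count. Likewise there are no ``collection losses at the prime $2$'' in the top bound, since Theorem~\ref{corlog} is uniform in $p$; and Theorem~\ref{thmA}(a), which you flag as the key to the top section, plays no role there either.

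What actually produces $r$ and $r+3$ (and is what the paper does) is plain arithmetic fed into Theorem~\ref{corlog}: the quotient $\bar{G}$ has nilpotency class at most $m$, so $\exp([\bar{G},\bar{G}^{\varphi}])$ divides $\exp(\bar{G})^{n}$ with $n=\lceil\log_p(m+1)\rceil$; for $p$ odd, $m+1=(p-1)p^{r-1}+1\leq p^r$ gives $n\leq r$, while for $p=2$, $m+1=2^{r+2}+1$ forces $n=r+3$. The asymmetry between the two cases comes solely from the different sizes of $m(p,r)$, not from worse behaviour of commutator collection at $p=2$. Two smaller repairs: (i) Theorem~\ref{corlog} takes as input the nilpotency class of the base group, not the class of the image of $[G,G^{\varphi}]$ in $\nu(G)/\gamma_{m+1}(\nu(G))$, so the clean route is to pass to $\bar{G}$ and use the exact sequence of Lemma~\ref{lem.general}(c) rather than estimate the class of that image; (ii) you cannot simply declare ``$s=p-1$ for $p$ odd, $s=1$ for $p=2$'': the value of $s$ is $d(\gamma_m(G))$, prescribed by Theorems~\ref{coclass2} and~\ref{coclassp}, although this slip is harmless because the conclusion of Theorem~\ref{thmA} does not depend on which admissible $s$ occurs.
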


We point out that both previous results implies a bound for $\exp(\nu(G))$ (cf. Remark \ref{rem:nu(G)}). 

As a consequence of Theorem~\ref{thm.cc} we obtain the following.

\begin{cor}\label{cor.explicit.cc}
Let $p$ be a prime and $G$ a $p$-group of coclass $r$. 
\begin{itemize}
   \item[(a)] If $p\geq 3$ then $\exp(M(G))$ and $\exp(\mu(G))$ divide $ \exp(G)^{r+1}$;
    \item[(b)] If $p=2$ then $\exp(M(G))$ and $\exp(\mu(G))$ divide $ \exp(G)^{r+3}$.
\end{itemize}
\end{cor}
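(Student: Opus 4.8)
The plan is to read off the corollary from Theorem~\ref{thm.cc} via the fact that $\mu(G)$ and $M(G)$ are sections of the tensor square $[G,G^{\varphi}]$. Indeed $\mu(G)=\ker\bigl(\rho'\colon[G,G^{\varphi}]\to G'\bigr)$ is a subgroup of $[G,G^{\varphi}]$, and the exact sequence $1\to\Delta(G)\to\mu(G)\to M(G)\to1$ presents $M(G)$ as a quotient of $\mu(G)$; hence $\exp(M(G))\mid\exp(\mu(G))\mid\exp([G,G^{\varphi}])$. Since $\gamma_m(G)\leq G$ we also have $\exp(\gamma_m(G))\mid\exp(G)$. For $p\geq3$ this settles part~(a) at once: Theorem~\ref{thm.cc}(a) gives $\exp([G,G^{\varphi}])\mid\exp(G)^{r}\cdot\exp(\gamma_m(G))$, and absorbing $\exp(\gamma_m(G))$ into a further power of $\exp(G)$ yields $\exp(G)^{r+1}$.

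For $p=2$ the same substitution into Theorem~\ref{thm.cc}(b) gives only $\exp(G)^{r+3}\cdot\exp(\gamma_m(G))\mid\exp(G)^{r+4}$, one power too many, so the real content of part~(b) is to save this factor. I would do so by splitting off the powerful part of $G$: set $N=\gamma_m(G)$, powerful with $m=2^{r+2}$, so that $Q=G/N$ has class $c\leq m-1$ and therefore $\lceil\log_2(c+1)\rceil\leq\lceil\log_2 m\rceil=r+2$. Theorem~\ref{corlog} applied to $Q$ controls the top, giving $\exp(M(Q))\mid\exp([Q,Q^{\varphi}])\mid\exp(Q)^{r+2}\mid\exp(G)^{r+2}$, while the powerful bottom $N$ should contribute only $\exp(N)\mid\exp(G)$; combining a factor $\exp(G)^{r+2}$ from $Q$ with a single factor $\exp(G)$ from $N$ gives the target $\exp(G)^{r+3}$, and the identical scheme is to be run with $\mu$ in place of $M$.

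The main obstacle is precisely this combination step for $p=2$. The naive device — the three-term filtration of $M(G)$ coming from the Lyndon–Hochschild–Serre spectral sequence of $1\to N\to G\to Q\to1$, whose graded pieces are subquotients of $M(Q)$, of $H_1(Q,N^{\mathrm{ab}})$ and of $M(N)$ — overcounts, since the two $N$-governed pieces would each cost a factor of $\exp(G)$ and land one back at $\exp(G)^{r+4}$. What is needed is a sharper exact sequence in which $M(G)$, respectively $\mu(G)$, is an extension of a subgroup of $M(Q)$, respectively $\mu(Q)$, by a single term whose exponent divides $\exp(N)$; I expect this to come from the Brown–Loday/Ganea-type sequences for the tensor square under the quotient $G\to Q$, exploiting that $N=\gamma_m(G)$ is powerful and potently embedded so that the cross-term collapses. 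Securing this collapse, together with the powerful-group estimate $\exp(M(N))\mid\exp(N)$ and the verification that the $p=2$ convention $\p=4$ introduces no spurious factor of $2$, is where the work lies; once it is in place the bounds $\exp(G)^{r+1}$ for $p\geq3$ and $\exp(G)^{r+3}$ for $p=2$ follow for both $M(G)$ and $\mu(G)$.
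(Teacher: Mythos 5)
Your part (a) is correct and is exactly the paper's (implicit) argument: $\exp(M(G))\mid\exp(\mu(G))\mid\exp([G,G^{\varphi}])$, then Theorem \ref{thm.cc}(a) together with $\exp(\gamma_m(G))\mid\exp(G)$. The genuine gap is in part (b): what you offer there is a plan, not a proof. The decisive step --- an exact sequence exhibiting $\mu(G)$ (resp.\ $M(G)$) as an extension of a subgroup of $\mu(Q)$ (resp.\ $M(Q)$) by a single term of exponent dividing $\exp(N)$, where $N=\gamma_m(G)$ and $Q=G/N$ --- is never constructed; you explicitly defer it (``I expect this to come from\dots'', ``where the work lies''). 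Moreover, the natural way to produce such a sequence from the paper's own machinery does not do what you need: Lemma \ref{lem.general}(c) does give $1\to[N,G^{\varphi}][G,N^{\varphi}]\to[G,G^{\varphi}]\to[Q,Q^{\varphi}]\to 1$, and restricting it to $\mu(G)$ is harmless, but the only bound the potent-subgroup machinery supplies for that kernel is Corollary \ref{corN}, namely $\exp(K)\mid\p\cdot\exp(N)=4\exp(N)$ when $p=2$; the spurious factor $4$ you were worried about is precisely what survives, so your scheme as described yields only $4\exp(G)^{r+3}$, i.e.\ nothing better than $\exp(G)^{r+4}$ in the worst case. (Your preliminary observation is right, though: substituting $\exp(\gamma_m(G))\mid\exp(G)$ into the bare statement of Theorem \ref{thm.cc}(b) gives only $r+4$, so part (b) really does require the finer information contained in the paper's proofs rather than the statement of Theorem \ref{thm.cc} alone.)

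The missing ingredient is in the paper, but it is Theorem \ref{thmA}(c), not a Brown--Loday/Ganea-type sequence or the Lyndon--Hochschild--Serre spectral sequence. Keep your decomposition $Q=G/\gamma_m(G)$: its class is at most $m-1=2^{r+2}-1$, so Theorem \ref{corlog} bounds the top by $\exp(Q)^{r+2}\mid\exp(G)^{r+2}$. For the kernel, note that $[\gamma_m(G),G^{\varphi}][G,\gamma_m(G)^{\varphi}]\leq\gamma_{m+1}(\nu(G))$, since $\gamma_m(G)$ and $\gamma_m(G^{\varphi})$ lie in $\gamma_m(\nu(G))$ (see also Proposition \ref{gammanu}). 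If $c\geq 2^{r+3}$, Theorem \ref{coclass2} provides the hypothesis $\gamma_{i+s}(G)=\gamma_i(G)^2$ for all $i\geq m$ (with $s=2^d\leq 2^{r+1}\leq m$), and then Theorem \ref{thmA}(c) --- this is where powerfulness of $\gamma_m(G)$ is actually exploited --- gives $\exp(\gamma_{m+1}(\nu(G)))\mid\exp(\gamma_m(G))$ with no factor of $4$. Hence $\exp([G,G^{\varphi}])\mid\exp(G)^{r+2}\cdot\exp(\gamma_m(G))\mid\exp(G)^{r+3}$; in the remaining case $c<2^{r+3}$, Theorem \ref{corlog} alone already gives $\exp([G,G^{\varphi}])\mid\exp(G)^{\lceil\log_2(c+1)\rceil}\mid\exp(G)^{r+3}$. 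Restricting to the subgroup $\mu(G)\leq[G,G^{\varphi}]$ and passing to its quotient $M(G)$ finishes (b). In short: your skeleton (split at $\gamma_m(G)$, pay $r+2$ on top and $1$ on the bottom) is the right shape, but the ``collapse'' you hoped to find in homological sequences is delivered instead by the power-commutator structure of $\gamma_{m+1}(\nu(G))$, i.e.\ by Theorem \ref{thmA}.
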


It is worth to mention that for every prime $p$ the bounds obtained in Corollary~\ref{cor.explicit.cc} improve the ones obtained in \cite{Moravec.cc} when the coclass $r$ is at least $2$ (cf. \cite[Corollary 4.8]{Moravec.cc}). Furthermore, in the context of Sambonet theorem \cite[Theorem 3.3]{Sambonet17}, the improvement occurs for $e\leq r$ if $p>2$ and $e\leq r+2$ if $p=2$, where $\exp(G)=p^e$. \\[2mm]

The paper is organized as follows. In Section 2 we collect results of general nature that are later used in the proofs of our main theorems. The third section is devoted to the proof of Theorem \ref{thmA}. The proofs of Theorems \ref{thm.potent} and \ref{corlog} are given in Section 4. We also obtain bounds for the exponent $\exp([G,G^{\varphi}])$ in terms of some potent normal subgroups (see Corollary \ref{cor.potent}, below). In Section 5 we prove Corollary \ref{cor.maximal} and Theorem \ref{thm.cc}. 


\section{Preliminaries}

\subsection{Finite $p$-groups}
In this subsection we summarize without proofs the relevant material on finite $p$-groups. 

\begin{lem}(\cite[Lemma 2.2]{JJJ}) \label{normalinc}
    Let $G$ be a finite $p$-group and $N$, $M$ normal subgroups of $G$. If $N\leq M[N,G]N^p$ then $N \leq M$.
\end{lem}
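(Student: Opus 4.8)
The plan is to reduce to the case $M = 1$ and then to show that a normal subgroup $N$ of a finite $p$-group satisfying $N = [N,G]N^p$ must be trivial. First I would pass to the quotient $\bar G = G/M$, writing $\bar X$ for the image of a subgroup $X$. Applying the natural projection to the hypothesis $N \le M[N,G]N^p$ and using $\bar M = 1$ together with $\overline{[N,G]} = [\bar N, \bar G]$ gives $\bar N \le [\bar N, \bar G]\,\bar N^p$. Since the reverse inclusion $[\bar N, \bar G]\bar N^p \le \bar N$ always holds (because $\bar N \trianglelefteq \bar G$), this reads $\bar N = [\bar N, \bar G]\bar N^p$. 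As $\bar N = 1$ is exactly the desired conclusion $N \le M$, it suffices to treat the case $M = 1$, that is, to prove that $N = [N,G]N^p$ forces $N = 1$.

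For this I would linearize by passing to the Frattini quotient. Assume $N \ne 1$ and set $W = N/\Phi(N)$, where $\Phi(N) = N^p[N,N]$ is the Frattini subgroup of the finite $p$-group $N$; by the Burnside basis theorem $W$ is a nonzero elementary abelian $p$-group, hence a nonzero vector space over $\mathbb F_p$. Since $N \trianglelefteq G$ and $\Phi(N)$ is characteristic in $N$, conjugation makes $W$ a nonzero module over the group algebra $\mathbb F_p[G]$. Writing $W$ additively, the image of a commutator $[n,g]$ equals $(g-1)\cdot\bar n$, so the image of $[N,G]$ in $W$ is $IW$, where $I \trianglelefteq \mathbb F_p[G]$ is the augmentation ideal; moreover $N^p \le \Phi(N)$ maps to $0$. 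Hence the relation $N = [N,G]N^p$ projects to $W = IW$.

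The key step is that the augmentation ideal $I$ of $\mathbb F_p[G]$ is nilpotent whenever $G$ is a finite $p$-group, say $I^t = 0$. Iterating $W = IW$ then gives $W = I^t W = 0$, contradicting $W \ne 0$. Thus $N = \Phi(N)$, which is impossible for a nontrivial finite $p$-group, so $N = 1$, as required. (Equivalently, one may cite that a finite $p$-group acting on a nonzero $\mathbb F_p$-module has nonzero coinvariants $W/IW$; the nilpotency of $I$ is the cleanest justification.) The only real obstacle is carrying the group-theoretic identity $N = [N,G]N^p$ correctly across to the additive statement $W = IW$; once $W$ is set up this passage is routine, and the reduction to $M = 1$ is purely formal, so I expect no further difficulties.
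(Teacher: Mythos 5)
Your proof is correct, but there is no internal proof to compare it against: the paper states this lemma in its preliminaries section, which the authors explicitly summarize \emph{without} proofs, citing it as Lemma 2.2 of Fern\'andez-Alcober, Gonz\'alez-S\'anches and Jaikin-Zapirain (the reference the paper labels [JJJ]). Taken on its own terms, every step of your argument checks out: the reduction to $M=1$ is legitimate because the images of $[N,G]$ and $N^p$ in $G/M$ are $[\bar{N},\bar{G}]$ and $\bar{N}^p$; the image of $[N,G]$ in $W=N/\Phi(N)$ is exactly $IW$, since $[n,g]=n^{-1}n^g$ maps to $\bar{n}(g-1)$ and such elements span $IW$; the subgroup $N^p$ dies in $W$; and the augmentation ideal of $\mathbb{F}_p[G]$ is indeed nilpotent for a finite $p$-group $G$ (it is the Jacobson radical of a finite-dimensional algebra whose only simple module is the trivial one), so $W=IW$ forces $W=0$, hence $N=\Phi(N)$ and $N=1$. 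For the record, the standard elementary argument needs no group algebra: assuming $M=1$ and $N\neq 1$, nilpotency of $G$ gives $[N,G]<N$ (otherwise $N=[N,{}_{k}\,G]\leq\gamma_{k+1}(G)=1$ for large $k$), so $A=N/[N,G]$ is a nontrivial abelian finite $p$-group, central in $G/[N,G]$; the hypothesis $N=[N,G]N^p$ then reads $A=A^p=\Phi(A)$, which is absurd. Your module-theoretic route costs the (standard but not free) fact that the augmentation ideal is nilpotent, and in exchange gives the formulation that generalizes most readily, e.g.\ to coinvariants and to the pro-$p$ setting of [JJJ]; the commutator argument is shorter and stays entirely within the toolkit the paper itself uses.
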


The following theorem is known as P. Hall's collection formula.

\begin{thm}(\cite[Appendix A]{D}) \label{thm.Hall}
Let $G$ be a $p$-group and $x, y$ elements of $G$. Then for any $k\geq 0$ we have  
\[
(xy)^{p^k}\equiv x^{p^k}y^{p^k} \pmod{\gamma_{2}(L)^{p^k}\gamma_{p}(L)^{p^{k-1}}\gamma_{p^2}(L)^{p^{k-2}}\gamma_{p^3}(L)^{p^{k-3}}\cdots \gamma_{p^k}(L)},
\]
where $L=\langle x,y\rangle$. We also have 
\[
[x,y]^{p^k}\equiv [x^{p^k}, y] \pmod {\gamma_{2}(L)^{p^k}\gamma_{p}(L)^{p^{k-1}}\gamma_{p^2}(L)^{p^{k-2}}\ldots \gamma_{p^k}(L)},
\]
where $L=\langle x,[x,y]\rangle$.
\end{thm}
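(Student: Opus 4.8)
The plan is to reduce the second congruence to the first, and then to prove the first by the classical collection process, the only genuinely arithmetic input being an estimate on the $p$-adic valuations of binomial coefficients.

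\emph{Reduction of the second congruence to the first.} Write $z=[x,y]$, so that $L=\langle x,[x,y]\rangle=\langle x,z\rangle$. Starting from the trivial identity $(x^{y})^{n}=(x^{n})^{y}$ and using $x^{y}=x[x,y]=xz$ together with $(x^{n})^{y}=x^{n}[x^{n},y]$, one obtains
\[
(xz)^{n}=x^{n}\,[x^{n},y]\qquad\text{for all }n\ge 1 .
\]
Specialising $n=p^{k}$ and applying the first congruence to the pair $(x,z)$ inside $L=\langle x,z\rangle$ gives $(xz)^{p^{k}}\equiv x^{p^{k}}z^{p^{k}}\pmod{M}$, where $M=\gamma_{2}(L)^{p^{k}}\gamma_{p}(L)^{p^{k-1}}\cdots\gamma_{p^{k}}(L)$. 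Comparing this with $(xz)^{p^{k}}=x^{p^{k}}[x^{p^{k}},y]$ and cancelling $x^{p^{k}}$ yields $[x^{p^{k}},y]\equiv z^{p^{k}}=[x,y]^{p^{k}}\pmod{M}$, which is exactly the second assertion. Hence it suffices to establish the first congruence.

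\emph{The collection formula.} Let $L=\langle x,y\rangle$; as a subgroup of a $p$-group it is nilpotent, so all products below are finite modulo a suitable $\gamma_{c+1}(L)$. The core is P.\ Hall's collection theorem in the free group on $x,y$: ordering the basic commutators of weight $\ge 2$ as $c_{3},c_{4},\dots$ by non-decreasing weight, one has
\[
(xy)^{n}=x^{n}y^{n}\prod_{i\ge 3}c_{i}^{f_{i}(n)},
\]
where, for a basic commutator $c_{i}$ of weight $w_{i}$, the exponent $f_{i}(n)$ is a $\mathbb{Z}$-linear combination of $\binom{n}{2},\binom{n}{3},\dots,\binom{n}{w_{i}}$. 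I would prove this structural statement by induction on the nilpotency class: the recursion $(xy)^{n+1}=(xy)(xy)^{n}$ forces a finite-difference relation expressing $f_{i}(n+1)-f_{i}(n)$ through the $f_{j}$ of strictly smaller weight, and since $\Delta\binom{n}{j}=\binom{n}{j-1}$ while $f_{i}(0)=f_{i}(1)=0$ for $w_{i}\ge 2$, an antidifference argument produces precisely the claimed binomial range, with no $\binom{n}{1}$ term and degree at most $w_{i}$.

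\emph{Specialisation and conclusion.} Set $n=p^{k}$. From $\binom{p^{k}}{j}=\tfrac{p^{k}}{j}\binom{p^{k}-1}{j-1}$ and the fact that $\binom{p^{k}-1}{j-1}$ is prime to $p$ (Lucas), one gets $v_{p}\!\binom{p^{k}}{j}=k-v_{p}(j)$ for $1\le j\le p^{k}$. Fix $c_{i}$ of weight $w_{i}$ and put $l=\lfloor\log_{p}w_{i}\rfloor$, so $p^{l}\le w_{i}$ and $v_{p}(j)\le l$ whenever $2\le j\le w_{i}$; hence $v_{p}\!\binom{p^{k}}{j}\ge k-l$ on that range, so $p^{k-l}\mid f_{i}(p^{k})$. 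Since $c_{i}\in\gamma_{w_{i}}(L)\le\gamma_{p^{l}}(L)$, we obtain $c_{i}^{f_{i}(p^{k})}\in\gamma_{p^{l}}(L)^{p^{k-l}}$, which is one of the factors of $M$ when $l\ge 1$ and lies in $\gamma_{2}(L)^{p^{k}}\subseteq M$ when $l=0$ (the case $2\le w_{i}<p$). As $M$ is a product of subgroups normal in $L$, it is itself a subgroup, so $\prod_{i\ge 3}c_{i}^{f_{i}(p^{k})}\in M$ and therefore $(xy)^{p^{k}}\equiv x^{p^{k}}y^{p^{k}}\pmod{M}$, as required. The main obstacle is entirely in the second paragraph: controlling the shape of the Hall polynomials $f_{i}$ through the collection process and its finite-difference bookkeeping. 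This is the classical part, and in a final write-up I would invoke the standard references on the collection process rather than reproduce the collection moves; by contrast the valuation estimate and the membership $c_{i}^{f_{i}(p^{k})}\in M$ are routine once that shape is known.
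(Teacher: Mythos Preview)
The paper does not prove this theorem at all: it is quoted with a citation to \cite[Appendix A]{D} and used as a black box throughout. So there is no ``paper's own proof'' to compare against; you have supplied a proof where the authors merely invoke the literature.

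That said, your argument is essentially the standard one from the cited reference, and it is correct. The reduction of the second congruence to the first via $(xz)^{n}=x^{n}[x^{n},y]$ with $z=[x,y]$ is clean; note that although $y$ need not lie in $L=\langle x,z\rangle$, the element $[x^{p^{k}},y]=x^{-p^{k}}(xz)^{p^{k}}$ visibly does, so cancelling $x^{p^{k}}$ inside $L$ is legitimate. For the main congruence, the Hall--Petresco shape $f_{i}(n)\in\mathbb{Z}\binom{n}{2}+\cdots+\mathbb{Z}\binom{n}{w_{i}}$ together with the valuation identity $v_{p}\binom{p^{k}}{j}=k-v_{p}(j)$ (your Lucas argument for the coprimality of $\binom{p^{k}-1}{j-1}$ is fine) gives exactly $c_{i}^{f_{i}(p^{k})}\in\gamma_{p^{l}}(L)^{p^{k-l}}$ with $l=\lfloor\log_{p}w_{i}\rfloor$, which lands in the stated modulus. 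One small point you could add for completeness: when $l\ge k$ the exponent $k-l$ is nonpositive, but then $c_{i}\in\gamma_{p^{l}}(L)\le\gamma_{p^{k}}(L)$, the last factor of $M$, so the conclusion still holds. In a final write-up, citing Hall's collection theorem (or the Hall--Petresco identity) for the structure of the $f_{i}$ is entirely appropriate, as the paper itself does.
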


\begin{cor}(\cite[Theorem 2.3]{JJJ}) \label{cor.Hall}
    Let $G$ be a $p$-group and $x_1,\ldots, x_r$ elements of $G$. Then for any $k\geq 0$ we have
    \[
    (x_1\ldots x_r)^{p^k}\equiv x_1^{p^k}\ldots x_r^{p^k} \pmod{\gamma_{2}(L)^{p^k}\gamma_{p}(L)^{p^{k-1}}\gamma_{p^2}(L)^{p^{k-2}}\gamma_{p^3}(L)^{p^{k-3}}\cdots \gamma_{p^k}(L)},
    \]
    where $L=\langle x_1,\ldots, x_r\rangle$.
\end{cor}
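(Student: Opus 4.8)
The plan is to argue by induction on the number $r$ of elements, taking the two-generator collection formula of Theorem~\ref{thm.Hall} as the base case. Throughout, for a subgroup $H\leq G$ write $\mathcal{N}(H)=\gamma_{2}(H)^{p^k}\gamma_{p}(H)^{p^{k-1}}\gamma_{p^2}(H)^{p^{k-2}}\cdots\gamma_{p^k}(H)$, so that the claimed congruence reads $(x_1\cdots x_r)^{p^k}\equiv x_1^{p^k}\cdots x_r^{p^k}\pmod{\mathcal{N}(L)}$. Two elementary facts about $\mathcal{N}$ will be used repeatedly. First, it is monotone: if $H\leq L$ then $\gamma_i(H)\leq\gamma_i(L)$ for all $i$, and hence $\mathcal{N}(H)\leq\mathcal{N}(L)$. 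Second, $\mathcal{N}(L)$ is normal in $L$: each $\gamma_i(L)$ is characteristic in $L$, each power $\gamma_i(L)^{p^j}$ is therefore characteristic, and a product of normal subgroups is normal. For $r=1$ the statement is trivial, and for $r=2$ it is precisely the first congruence of Theorem~\ref{thm.Hall}.

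For the inductive step, suppose the result holds for any $r-1$ elements and set $M=\langle x_1,\ldots,x_{r-1}\rangle$ and $y=x_1\cdots x_{r-1}$, so that $M\leq L$ and $\langle y,x_r\rangle\leq L$. Applying Theorem~\ref{thm.Hall} to the pair $y,x_r$ gives $(y\,x_r)^{p^k}\equiv y^{p^k}x_r^{p^k}\pmod{\mathcal{N}(\langle y,x_r\rangle)}$, and by monotonicity this holds modulo $\mathcal{N}(L)$. The inductive hypothesis applied inside $M$ gives $y^{p^k}=(x_1\cdots x_{r-1})^{p^k}\equiv x_1^{p^k}\cdots x_{r-1}^{p^k}\pmod{\mathcal{N}(M)}$, and again $\mathcal{N}(M)\leq\mathcal{N}(L)$. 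Writing these as $(x_1\cdots x_r)^{p^k}=y^{p^k}x_r^{p^k}\,n_1$ and $y^{p^k}=x_1^{p^k}\cdots x_{r-1}^{p^k}\,n_2$ with $n_1,n_2\in\mathcal{N}(L)$, substitution yields $(x_1\cdots x_r)^{p^k}=x_1^{p^k}\cdots x_{r-1}^{p^k}\,n_2\,x_r^{p^k}\,n_1$.

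It remains to move $n_2$ to the right of $x_r^{p^k}$. Since $\mathcal{N}(L)$ is normal in $L$ and $x_r^{p^k}\in L$, the conjugate $n_2^{\,x_r^{p^k}}$ again lies in $\mathcal{N}(L)$, so $n_2\,x_r^{p^k}=x_r^{p^k}\,n_2^{\,x_r^{p^k}}$. Substituting this gives $(x_1\cdots x_r)^{p^k}=x_1^{p^k}\cdots x_r^{p^k}\cdot n_2^{\,x_r^{p^k}}\,n_1$, and since $n_2^{\,x_r^{p^k}}n_1\in\mathcal{N}(L)$ this is exactly the required congruence, completing the induction. The only point needing genuine care — and the natural obstacle — is the bookkeeping across the two congruences: each is derived modulo an a priori different subgroup ($\mathcal{N}(\langle y,x_r\rangle)$ and $\mathcal{N}(M)$), and the argument closes only because both embed in $\mathcal{N}(L)$ and because $\mathcal{N}(L)$ is normal in $L$, which is precisely what allows the error term $n_2$ to be commuted past $x_r^{p^k}$ without leaving $\mathcal{N}(L)$.
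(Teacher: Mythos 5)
Your proof is correct. There is, however, nothing in the paper to compare it against: the statement sits in the preliminaries, which are explicitly ``summarized without proofs,'' and the result is simply quoted from \cite[Theorem 2.3]{JJJ}. Your argument is therefore a genuine, self-contained derivation rather than a reproduction of the paper's reasoning. It is also sound: the two facts you isolate --- monotonicity of $H \mapsto \gamma_{2}(H)^{p^k}\gamma_{p}(H)^{p^{k-1}}\cdots\gamma_{p^k}(H)$ under inclusion of subgroups, and normality of this subgroup in $L$ (each $\gamma_i(L)$ is fully invariant, so its power subgroups are characteristic in $L$, and a product of normal subgroups is normal) --- are exactly what is needed to splice the inductive congruence for $x_1\cdots x_{r-1}$ into the two-element congruence for the pair $(x_1\cdots x_{r-1},\, x_r)$ and to commute the error term past $x_r^{p^k}$. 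For comparison, the cited source (in the spirit of the classical Hall--Petrescu collection argument) obtains the $r$-factor formula directly from the collection process applied in the free group on $r$ generators, which yields the statement for all factors at once together with explicit information about the commutator terms; your induction trades that finer information for elementarity, requiring nothing beyond Theorem~\ref{thm.Hall} and routine coset bookkeeping. Both routes prove the corollary as stated.
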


A consequence of P. Hall's collection formula is given by the following lemma
\begin{lem}(\cite[Theorem 2.4]{JJJ}) \label{lem.hallformula}
Let $G$ be a finite $p$-group and $N$, $M$ normal subgroups of $G$. Then $$[N^{p^k},M]\equiv [N,M]^{p^{k}}(\text{mod}\ [M,_pN]^{p^{k-1}} [M,_{p^2}N]^{p^{k-2}} \ldots [M,_{p^k}N]).$$
\end{lem}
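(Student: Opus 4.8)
The statement is a direct consequence of P. Hall's collection formula (Theorem~\ref{thm.Hall}), and the plan is to run that formula on single generators and then promote the result to the normal subgroups $N,M$. First I would record that all three subgroups in play — $[N^{p^k},M]$, $[N,M]^{p^k}$, and the modulus $C:=[M,_pN]^{p^{k-1}}\cdots[M,_{p^k}N]$ — are normal in $G$, so the asserted congruence only has to be checked modulo $C$ on a generating set. Since $N^{p^k}=\langle x^{p^k}\mid x\in N\rangle$ and $M$ are normal, the expansion $[ab,c]=[a,c]^b[b,c]$ together with normality shows that $[N^{p^k},M]$ is generated, modulo $C$, by the elements $[x^{p^k},y]$ with $x\in N$, $y\in M$; dually, collection (Corollary~\ref{cor.Hall}) reduces $[N,M]^{p^k}$ to the powers $[x,y]^{p^k}$, at the cost of collection error terms that will themselves have to be absorbed into $C$. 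Thus, up to these errors, it suffices to compare $[x^{p^k},y]$ with $[x,y]^{p^k}$ for single generators.

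For that comparison I would apply the second collection formula of Theorem~\ref{thm.Hall} to obtain
\[
[x,y]^{p^k}\equiv[x^{p^k},y]\pmod{\gamma_2(L)^{p^k}\gamma_p(L)^{p^{k-1}}\cdots\gamma_{p^k}(L)},\qquad L=\langle x,[x,y]\rangle,
\]
so the whole problem becomes showing that every correction factor $\gamma_{p^i}(L)^{p^{k-i}}$, as well as the leading factor $\gamma_2(L)^{p^k}$, lands inside $C$. The engine here is a weight count inside $L$: a basic commutator of weight $j$ in the two generators $x$ and $u:=[x,y]$ that involves $u$ exactly once carries one entry from $M$ and $j$ entries from $N$, hence lies in $[M,_jN]$; in particular $\gamma_{p^i}(L)$ is, up to deeper terms, contained in $[M,_{p^i}N]$, which matches the exponent $p^{k-i}$ attached to it in the formula. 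Basic commutators involving $u$ at least twice carry two or more entries from $M$, and since $[M,M]\le M$ these are swallowed by the same $[M,_jN]$ after rearranging by the Hall--Witt identity.

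The delicate point — and what I expect to be the main obstacle — is the leading factor $\gamma_2(L)^{p^k}$, together with the bookkeeping that channels each $\gamma_{p^i}(L)^{p^{k-i}}$ into the prescribed term $[M,_{p^i}N]^{p^{k-i}}$ rather than into a coarser factor such as $[M,_2N]^{p^k}$. I would organise this as an induction on $k$: writing $N^{p^k}=(N^p)^{p^{k-1}}$, I apply the case $k-1$ to the pair $(N^p,M)$ and then feed in the base case $k=1$, namely $[N^p,M]\equiv[N,M]^p\pmod{[M,_pN]}$, which is the single-generator computation above specialised to one power. The nested correction terms $[M,_{p^i}(N^p)]$ produced this way must be unfolded, again by repeated use of the $k=1$ relation, which trades each power of $p$ against $p-1$ additional commutator entries from $N$. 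Verifying that all of these contributions telescope exactly into $C$, with nothing surviving outside it, is the heart of the argument.
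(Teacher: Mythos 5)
The paper itself contains no proof of this lemma: it sits in a preliminaries subsection explicitly introduced by ``we summarize without proofs the relevant material'', and is quoted from \cite[Theorem 2.4]{JJJ}. So there is no in-paper argument to compare against, and your proposal must stand on its own. On its own, it establishes exactly half of the statement. A congruence between subgroups means $[N^{p^k},M]\,C=[N,M]^{p^k}C$ with $C=[M,_pN]^{p^{k-1}}\cdots[M,_{p^k}N]$, i.e.\ two inclusions. Your single-generator computation plus the weight count does give $[N^{p^k},M]\le [N,M]^{p^k}C$: one has $\gamma_j(L)\le[M,_jN]$ for all $j\ge 2$ (cleanest by induction on $j$, using $L\le N$, $[x,[x,y]]\in[M,_2N]$ and normality of $[M,_jN]$ in $G$, rather than basic-commutator counting), so each $\gamma_{p^i}(L)^{p^{k-i}}$ lands in $C$, while the leading factor is harmless because $\gamma_2(L)^{p^k}\le[M,_2N]^{p^k}\le[M,N]^{p^k}$ --- it is absorbed by the \emph{main term}, not by the modulus. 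In particular the ``channeling'' issue you single out as the delicate point (that $\gamma_{p^i}(L)^{p^{k-i}}$ might only land in a coarser factor) is a non-issue; and this forward inclusion happens to be the only direction the paper ever uses (Proposition \ref{propN}, Corollary \ref{s-lowercentral}).

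The genuine gap is the reverse inclusion $[N,M]^{p^k}\le[N^{p^k},M]\,C$, and your plan does not close it --- already at $k=1$. Collecting $w^{p}$ for $w\in[N,M]$ (a product of commutators $[x_i,y_i]^{\pm1}$) and applying Hall's formula to each $[x_i,y_i]^{p}$ leaves error terms lying in $[M,_2N]^{p}$ (from $\gamma_2$ of the collection subgroup and of each $L_i$), and there is no a priori reason why $[M,_2N]^{p}\le[N^{p},M]\,[M,_pN]$; absorbing it into $[N,M]^{p}$, as in the other direction, is circular here. Hence the base case of your induction on $k$ is \emph{not} ``the single-generator computation specialised to one power'' --- that computation yields only the forward inclusion --- and an induction on $k$ built on it inherits this one-sidedness at every stage. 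The missing idea is a second induction, on $M$ itself: in a finite $p$-group $[M,N]\lneq M$ whenever $M\neq 1$, and $[M,N]$ is normal with $[M,N]\le M\cap N$, so writing $[M,_2N]=[N,[M,N]]$ the statement for the smaller pair $(N,[M,N])$ gives
$$[M,_2N]^{p^k}\le [N^{p^k},[M,N]]\,[[M,N],_pN]^{p^{k-1}}\cdots[[M,N],_{p^k}N]\le [N^{p^k},M]\,C,$$
which is precisely what absorbs the $\gamma_2$-errors both in the base case and in each step of the induction on $k$. Without some mechanism of this kind, your proposal proves only one of the two inclusions asserted by the lemma.
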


\begin{lem}[\cite{JJ}]\label{lem.lower.potent}
Let $G$ be a potent $p$-group and $k\geq 1$. If $p=2$, then $\gamma_{k+1}(G)\leq\gamma_k(G)^4$, and if $p\geq 3$ then $\gamma_{p-1+k}(G)\leq\gamma_{k+1}(G)^p$.
\end{lem}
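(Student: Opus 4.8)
The plan is to treat the two primes separately, since for $p=2$ the notion of ``potent'' coincides with ``powerful'' and one may invoke a known structural fact, whereas for $p\ge 3$ the statement lies genuinely in the potent regime (where the powerful-embedding machinery is unavailable) and calls for an inductive argument based on the collection machinery of Section~2.

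For $p=2$ a potent group is by definition powerful, so each $\gamma_k(G)$ is powerfully embedded in $G$; as recalled in the introduction (cf.\ \cite[Corollary 11.6]{Khukhro}) this means $\gamma_{k+1}(G)\le\gamma_k(G)^{\p}=\gamma_k(G)^4$ for every $k\ge1$, which is precisely the asserted inclusion. Hence the case $p=2$ needs no further work.

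For $p\ge 3$ I would establish $\gamma_{p-1+k}(G)\le\gamma_{k+1}(G)^p$ by induction on $k\ge 1$, using as the base of the recursion the defining inequality of potency $\gamma_{p-1}(G)\le G^p$ (the ``$k=0$'' instance; note that for $k=1$ this is exactly the hypothesis invoked below). For the inductive step I assume $\gamma_{p-2+k}(G)\le\gamma_k(G)^p$ and write
\[
\gamma_{p-1+k}(G)=[\gamma_{p-2+k}(G),G]\le[\gamma_k(G)^p,G].
\]
Applying the Hall-type congruence of Lemma~\ref{lem.hallformula} with $N=\gamma_k(G)$, $M=G$ and exponent $p$ (so that the modulus reduces to the single term $[G,_p\gamma_k(G)]$) then gives
\[
[\gamma_k(G)^p,G]\le\gamma_{k+1}(G)^p\cdot[G,_p\gamma_k(G)].
\]

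The decisive point is to absorb the error term $[G,_p\gamma_k(G)]$. Since $[G,_p\gamma_k(G)]\le\gamma_{pk+1}(G)$ (a repeated application of $[\gamma_a(G),\gamma_b(G)]\le\gamma_{a+b}(G)$) and $pk+1\ge p+k$ --- equivalently $(p-1)(k-1)\ge0$, which holds for all $k\ge1$ --- this term is contained in $\gamma_{p+k}(G)=[\gamma_{p-1+k}(G),G]$. Setting $N=\gamma_{p-1+k}(G)$ and $M=\gamma_{k+1}(G)^p$, the preceding inclusions collapse to $N\le M[N,G]\le M[N,G]N^p$, and Lemma~\ref{normalinc} at once yields $N\le M$, which is the desired inclusion at level $k$. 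I expect the main obstacle to be exactly this absorption step: one has to verify that the higher-commutator modulus produced by the collection formula is swallowed by $[N,G]$ so that Lemma~\ref{normalinc} becomes applicable, and it is here that both hypotheses $p\ge3$ and $k\ge1$ enter, through the elementary inequality $(p-1)(k-1)\ge0$.
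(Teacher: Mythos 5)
Your proof is correct; note that the paper itself gives no proof of this lemma at all --- it is quoted from \cite{JJ} in the preliminaries, which are explicitly stated ``without proofs'' --- so there is no in-paper argument to compare against. Your route (for $p=2$, the observation that potent coincides with powerful together with Khukhro's result that each $\gamma_k(G)$ is powerfully embedded; for $p\ge 3$, induction on $k$ combining the collection formula of Lemma~\ref{lem.hallformula} with absorption via Lemma~\ref{normalinc}, the step hinging on the verified inequality $pk+1\ge p+k$) is the standard one, and it is precisely the pattern of argument the paper itself deploys in analogous places such as Proposition~\ref{propN}(b) and Corollary~\ref{s-lowercentral}.
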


The next lemma will be useful to determine the exponent of the group $\nu(G)$ in terms of $\exp(G)$ for some $p$-groups. 

\begin{lem}[\cite{JJJ}] \label{lem.exponent}
Let $G$ be a finite $p$-group and $k\geq 1$. Assume that $\gamma_{k(p-1)}(G) \leq \gamma_r(G)^{p^s}$ for some $r$ and $s$ such that $k(p-1) < r + s(p-1)$. Then the exponent $\exp(\Omega_i(G))$ is at most $p^{i+k-1}$ for all $i$. 
\end{lem}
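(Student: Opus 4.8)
The plan is to prove the equivalent statement $\Omega_i(G)^{p^{i+k-1}}=1$ for every $i$, by induction on $i$ (with, for fixed $i$, an inner induction on the nilpotency class of $G$). Writing $n=i+k-1$, I first record that the hypothesis passes to every quotient $G/M$, since $\gamma_j(G/M)=\gamma_j(G)M/M$ and $\gamma_r(G/M)^{p^s}=\gamma_r(G)^{p^s}M/M$; thus the inequality $k(p-1)<r+s(p-1)$ and the containment $\gamma_{k(p-1)}(G)\leq\gamma_r(G)^{p^s}$ survive in all sections, which is what makes the induction legitimate. The subgroup $N:=\Omega_i(G)^{p^n}$ is characteristic, hence normal, so the target $N=1$ is exactly the conclusion of Lemma~\ref{normalinc} applied with $M=1$, provided one can establish the containment $N\leq [N,G]\,N^p$.

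Before the main estimate I would \emph{propagate the hypothesis up the lower central series}. Applying Lemma~\ref{lem.hallformula} with $\gamma_r(G)$ in the role of the power-bearing subgroup and iterated commutators with $G$ in the role of $M$, one shows $\gamma_{k(p-1)+t}(G)\leq\gamma_{r+t}(G)^{p^s}\cdot(\text{deeper terms})$ for all $t\geq0$. The effect is that the deep lower-central terms $\gamma_{p^j}(G)$ are themselves $p^s$-divisible modulo still deeper terms; this is the lever that converts \emph{depth} in the series into extra \emph{$p$-divisibility}. The inequality $k(p-1)<r+s(p-1)$ is precisely what guarantees that each such step strictly raises the $p$-weight, so the propagation is genuinely useful rather than circular.

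The crux is the collection step. Let $g_1,\dots,g_t$ be generators of $\Omega_i(G)$, each of order dividing $p^i$, and set $L=\langle g_1,\dots,g_t\rangle$. Corollary~\ref{cor.Hall} gives
\[
(g_1\cdots g_t)^{p^n}\equiv g_1^{p^n}\cdots g_t^{p^n}\pmod{\gamma_2(L)^{p^n}\gamma_p(L)^{p^{n-1}}\gamma_{p^2}(L)^{p^{n-2}}\cdots\gamma_{p^n}(L)}.
\]
Since $n\geq i$, each $g_m^{p^n}=1$, so $(g_1\cdots g_t)^{p^n}$ lies in the product of error terms $\gamma_2(L)^{p^n}\prod_{j\geq1}\gamma_{p^j}(L)^{p^{\,n-j}}$. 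I would then show that each factor is absorbed into $[N,G]\,N^p$: the shallow factor $\gamma_2(L)^{p^n}$ carries full $p^n$-divisibility and is dispatched by the inner (class) induction, while the deep factors $\gamma_{p^j}(L)^{p^{\,n-j}}\leq\gamma_{p^j}(G)^{p^{\,n-j}}$ are controlled by feeding them into the propagation of the previous paragraph, which supplies at least the $j$ extra powers of $p$ that are missing from $p^{\,n-j}$ — and does so exactly under $k(p-1)<r+s(p-1)$. This yields $N\leq[N,G]\,N^p$, and Lemma~\ref{normalinc} forces $N=1$, closing the induction.

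I expect essentially all the difficulty to be concentrated in the last paragraph: the term-by-term matching, across $j=1,\dots,n$, of the $p$-power \emph{losses} $p^{\,n-j}$ produced by the collection formula against the $p$-power \emph{gains} produced by the propagated hypothesis, together with the verification that $k(p-1)<r+s(p-1)$ is the break-even inequality (if it failed, the deepest terms such as $\gamma_{p^n}(L)$, which carry no compensating power of $p$, would obstruct the argument). A secondary but real subtlety is the passage from powers of a product of finitely many generators to the full subgroup $\Omega_i(G)$, which is precisely what the outer induction on $i$ and the reduction modulo $\gamma_c(G)$ are designed to handle.
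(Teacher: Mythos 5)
A point of reference first: the paper gives no proof of this statement at all --- it appears in the preliminaries, which the authors explicitly ``summarize without proofs,'' and is quoted from \cite{JJJ}, where it is a main theorem proved via the machinery of potent filtrations (PF-embedded subgroups), not by a direct collection argument. So your attempt must stand on its own, and it does not: the gap sits exactly where you predict ``all the difficulty'' to be, and the sketch you give for closing it would fail. After the collection step you must absorb the error terms $\gamma_2(L)^{p^n}$ and $\gamma_{p^j}(L)^{p^{\,n-j}}$ into $[N,G]N^p$, where $N=\Omega_i(G)^{p^n}$ and $L\leq\Omega_i(G)$. For the deep factors you propose the bound $\gamma_{p^j}(L)^{p^{\,n-j}}\leq\gamma_{p^j}(G)^{p^{\,n-j}}$ and then invoke the ``propagated'' hypothesis. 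But the propagation (via Lemma~\ref{lem.hallformula}) only says that deep lower central terms of $G$ are highly $p$-divisible --- they are generated by $p^s$-th powers of elements of $\gamma_{r+t}(G)$ modulo deeper terms. Divisibility of subgroups of $G$ yields no containment in $[N,G]N^p$, because $\gamma_{p^j}(G)$ is built from all of $G$ and can be enormous while $N$ is trivial. Concretely, take $p$ odd and $G=\langle a,b \mid a^{p^m}=b^{p^m}=1,\ a^b=a^{1+p}\rangle$ with $m$ large: $G$ is powerful, so the hypothesis holds with $k=r=s=1$, and for $i=1$ the lemma asserts $N=\Omega_1(G)^p=1$; here $\Omega_1(G)$ is elementary abelian, so in fact $\gamma_p(L)=1$ for every $L\leq\Omega_1(G)$, yet your substitute $\gamma_p(G)=\langle a^{p^{p-1}}\rangle$ is a large (and highly divisible) subgroup that certainly does not lie in $[N,G]N^p=1$. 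Replacing $\gamma_{p^j}(L)$ by $\gamma_{p^j}(G)$ discards the only information that matters --- that $L$ is generated by elements of order dividing $p^i$ --- and no bookkeeping with the inequality $k(p-1)<r+s(p-1)$ at the level of $G$'s lower central series can recover it.

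The shallow factor is no better: the claim that $\gamma_2(L)^{p^n}$ is ``dispatched by the inner (class) induction'' is circular. The inner induction, applied to $G/\gamma_c(G)$, yields only $N\leq\gamma_c(G)\leq Z(G)$; after that the goal becomes $w^{p^n}\in N^p$ for each $w\in\Omega_i(G)$, and nothing so far controls the orders of the commutators $[g_a,g_b]$ of elements of order dividing $p^i$. Bounding the exponent of the subgroup they generate is not a smaller instance of the problem handled by your inductions --- $\gamma_2(L)$ is again a subgroup of $\Omega_i(G)$ generated by elements whose orders you do not know --- it \emph{is} the problem. This is precisely why \cite{JJJ} does not argue by naive collection: they first show the hypothesis forces suitable terms of the lower central series to admit potent filtrations, prove power and exponent statements for PF-embedded subgroups, and only then assemble $\Omega_i(G)$. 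If you want to complete a proof along your lines, the missing ingredient is a structural statement about subgroups generated by elements of order dividing $p^i$ (on which the hypothesis must be spent), not divisibility statements about $\gamma_{p^j}(G)$.
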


Let $G$ be a $p$-group. We define $\Pi_i(G)$  inductively by: $\Pi_0(G)=G$ and $\Pi_i(G)=(\Pi_{i-1}(G))^p$ for $i>0$. The next result will be needed in the proof of Theorem \ref{thmA}.

\begin{lem}[\cite{D}]\label{power}
Let $G$ be a powerful $p$-group and $i\geq 1$. Then $\Pi_i(G) = G^{p^{i}}$ for every $i\geq 1$.
\end{lem}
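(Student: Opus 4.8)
The plan is to argue by induction on $i$. For the base case $i=1$ there is nothing to prove, since by definition $\Pi_1(G)=(\Pi_0(G))^p=G^p=G^{p^1}$. For the inductive step I would assume that $\Pi_{i-1}(G)=G^{p^{i-1}}$ and, crucially, carry along the auxiliary hypothesis that the subgroup $G^{p^{i-1}}$ is again powerful. The claim $\Pi_i(G)=G^{p^i}$ then reduces to the single power identity
\[
\bigl(G^{p^{i-1}}\bigr)^p=G^{p^i},
\]
because $\Pi_i(G)=(\Pi_{i-1}(G))^p=(G^{p^{i-1}})^p$ by the inductive hypothesis.

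One inclusion is immediate and uses no hypothesis on $G$: every generator $g^{p^i}$ of $G^{p^i}$ can be written as $g^{p^i}=(g^{p^{i-1}})^p$ with $g^{p^{i-1}}\in G^{p^{i-1}}$, whence $g^{p^i}\in(G^{p^{i-1}})^p$ and therefore $G^{p^i}\leq(G^{p^{i-1}})^p$. The reverse inclusion $(G^{p^{i-1}})^p\leq G^{p^i}$ is the real content. A generator of $(G^{p^{i-1}})^p$ has the form $x^p$ with $x\in G^{p^{i-1}}$, but in general $x$ is only a \emph{product} of $p^{i-1}$-th powers, and since $(ab)^p\neq a^pb^p$ the element $x^p$ need not visibly be a $p^i$-th power. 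To overcome this I would invoke the structural fact for powerful $p$-groups (cf.\ \cite{D}) that in a powerful $p$-group the set of $p^k$-th powers is already a subgroup, namely $G^{p^k}=\{\,g^{p^k}\mid g\in G\,\}$, and that each $G^{p^k}$ is itself powerful. Granting this for the powerful group $G^{p^{i-1}}$, every $x\in G^{p^{i-1}}$ is a single power $x=g^{p^{i-1}}$, so $x^p=g^{p^i}\in G^{p^i}$; this yields the desired inclusion and closes the induction, with the auxiliary hypothesis propagated since $G^{p^i}$ is again powerful.

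The main obstacle is exactly this structural fact, that the set of $p^k$-th powers coincides with the subgroup $G^{p^k}$, which is what makes the $p$-th power map behave multiplicatively on the relevant sections. If I were to prove it from scratch rather than cite \cite{D}, I would run a secondary induction on the nilpotency class of $G$ using P.\ Hall's collection formula (Theorem \ref{thm.Hall}) together with Lemma \ref{lem.hallformula}: the congruence $(ab)^p\equiv a^pb^p$ modulo $\gamma_2(L)^p\gamma_p(L)$, combined with the powerful relation $G'\leq G^{\p}$, lets one absorb the error terms into higher powers and show simultaneously that products of $p$-th powers are $p$-th powers and that $G^p$ inherits powerfulness. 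These are precisely the auxiliary statements the outer induction requires, so they should be packaged into the inductive hypothesis alongside $\Pi_{i-1}(G)=G^{p^{i-1}}$.
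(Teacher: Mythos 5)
The paper never proves this lemma: it sits in the preliminaries, which the authors explicitly ``summarize without proofs,'' and is quoted directly from \cite{D}. So the only meaningful comparison is with the source, and there your proposal is essentially the standard derivation: in \cite{D} the three statements you juggle --- that $G^{p^k}$ coincides with the \emph{set} of $p^k$-th powers, that each $G^{p^k}$ is again powerful, and that $\Pi_i(G)=G^{p^i}$ --- are all clauses of one theorem, proved by a single simultaneous induction, and your outer induction correctly unwinds the last clause from the first. Two small remarks. First, the structural fact must be applied to $G$ at level $k=i-1$ (every element of the subgroup $G^{p^{i-1}}$ is a single $p^{i-1}$-th power of an element of $G$); your phrase ``granting this for the powerful group $G^{p^{i-1}}$'' reads as if you were applying it internally to the group $G^{p^{i-1}}$, which would only describe $p$-th powers of its own elements and is not what the step needs. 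Second, once the set-of-powers fact for $G$ is granted at every level, the auxiliary hypothesis that $G^{p^{i-1}}$ is powerful is never actually used --- the generators of $(G^{p^{i-1}})^p$ are $p$-th powers of single $p^{i-1}$-th powers, hence $p^i$-th powers, and the induction closes; that hypothesis only becomes indispensable in your final paragraph, where you sketch proving the set-of-powers fact from scratch. That sketch (Hall's collection formula plus absorption of the error terms $\gamma_2(L)^p\gamma_p(L)$ using $G'\leq G^{\p}$) is indeed the right engine and is where all the real content of the lemma lives; be aware, though, that the argument in \cite{D} runs by induction on the order of $G$ with quotient reductions rather than on the nilpotency class, and making a class-based induction airtight would take some care.
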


\subsection{The group $\nu(G)$}

This subsection will be devoted to describe some properties of the group $\nu(G)$. 

The following basic properties are consequences of 
the defining relations of $\nu(G)$ and the commutator rules (see \cite[Section 2]{NR1} and \cite[Lemma 1.1]{BFM} for more details). 

\begin{lem} 
\label{basic.nu}
The following relations hold in $\nu(G)$, for all 
$g, h, x, y \in G$.
\begin{itemize}
\item[$(i)$] $[g, h^{\varphi}]^{[x, y^{\varphi}]} = [g, h^{\varphi}]^{[x, 
y]}$; 
\item[$(ii)$] $[g, h^{\varphi}, x^{\varphi}] = [g, h, x^{\varphi}] = [g, 
h^{\varphi}, x] = [g^{\vfi}, h, x^{\vfi}] = [g^{\vfi}, h^{\vfi}, x] = 
[g^{\vfi}, 
h, x]$;
\item[$(iii)$] $[[g,h^{\varphi}],[x,y^{\varphi}]] = [[g,h],[x,y]^{\varphi}]$.
\end{itemize}
\end{lem}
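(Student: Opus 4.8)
The plan is to extract from the presentation a single \emph{conjugation principle} and then derive all three assertions from it. Writing the two defining relations as $[g_1,g_2^{\varphi}]^{g_3}=[g_1^{g_3},(g_2^{g_3})^{\varphi}]$ and $[g_1^{g_3},(g_2^{g_3})^{\varphi}]=[g_1,g_2^{\varphi}]^{g_3^{\varphi}}$, I first record the identity $[g_1,g_2^{\varphi}]^{g_3}=[g_1,g_2^{\varphi}]^{g_3^{\varphi}}$, call it $(\ast)$. Since the elements $[g_1,g_2^{\varphi}]$ generate the normal subgroup $[G,G^{\varphi}]$ and $(\ast)$ is preserved under products and inverses, I would upgrade it to the statement that $w^{t}=w^{t^{\varphi}}$ for every $w\in[G,G^{\varphi}]$ and every $t\in G$. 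This is the only tool needed for most of the lemma.

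For $(i)$ I would write the conjugating element as the word $[x,y^{\varphi}]=x^{-1}(y^{-1})^{\varphi}x\,y^{\varphi}$ and conjugate $w=[g,h^{\varphi}]$ by it one letter at a time. Each partial conjugate stays inside the normal subgroup $[G,G^{\varphi}]$, so $(\ast)$ lets me replace each $\varphi$-decorated letter $(y^{\pm1})^{\varphi}$ by its plain counterpart $y^{\pm1}$ without changing the result. The conjugating word thereby collapses to $x^{-1}y^{-1}xy=[x,y]$, giving $[g,h^{\varphi}]^{[x,y^{\varphi}]}=[g,h^{\varphi}]^{[x,y]}$.

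For $(ii)$ I would exploit two symmetries to cut the six-term chain down to essentially one genuine identity. Applying $(\ast)$ to $w=[g,h^{\varphi}]$ gives $[g,h^{\varphi},x]=[g,h^{\varphi},x^{\varphi}]$, and applying it to $w=[g^{\varphi},h]=[h,g^{\varphi}]^{-1}\in[G,G^{\varphi}]$ gives $[g^{\varphi},h,x]=[g^{\varphi},h,x^{\varphi}]$; these settle the equalities that only move the $\varphi$ on the last entry. The presentation is invariant under the involution $\theta\colon g\mapsto g^{\varphi},\ g^{\varphi}\mapsto g$, which one checks preserves the relations (again via $(\ast)$), and $\theta$ matches the first three terms of the chain with the last three ($1\!-\!6$, $2\!-\!5$, $3\!-\!4$). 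Thus everything reduces to proving one cross-type equality, for instance $[g,h,x^{\varphi}]=[g,h^{\varphi},x]$, together with a single bridge between the two $\theta$-orbits such as $[g,h^{\varphi},x]=[g^{\varphi},h,x]$. I expect these to be the main obstacle: unlike the previous steps they cannot be obtained from $(\ast)$ alone, and require expanding the basic commutators through the full relation $[g_1,g_2^{\varphi}]^{g_3}=[g_1^{g_3},(g_2^{g_3})^{\varphi}]$ and the derived rule $[g,h^{\varphi}]^{h_1^{\varphi}}=[g^{h_1},(h^{h_1})^{\varphi}]$, and then matching terms by a careful commutator-calculus computation.

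Finally, $(iii)$ should fall out of $(i)$ and $(ii)$. By $(i)$, conjugation of $w=[g,h^{\varphi}]$ by $[x,y^{\varphi}]$ agrees with conjugation by $[x,y]\in G$, so $[[g,h^{\varphi}],[x,y^{\varphi}]]=[[g,h^{\varphi}],[x,y]]=[g,h^{\varphi},[x,y]]$. Now $[x,y]\in G$, so this is an instance of the equality $[g,h^{\varphi},z]=[g,h,z^{\varphi}]$ from $(ii)$ with $z=[x,y]$; since $[x,y]^{\varphi}=[x^{\varphi},y^{\varphi}]$, the right-hand side is exactly $[[g,h],[x,y]^{\varphi}]$, which is the desired identity.
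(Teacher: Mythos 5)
Your scaffolding is correct as far as it goes: the identity $(\ast)$, its upgrade to $w^{t}=w^{t^{\varphi}}$ for all $w\in[G,G^{\varphi}]$ (two conjugation homomorphisms agreeing on the generators of $[G,G^{\varphi}]$), the normality of $[G,G^{\varphi}]$, the letter-by-letter conjugation argument for $(i)$, the existence of the swap involution $\theta$ (which does preserve the presentation, though the reason is that $\theta$ carries each defining relation to the inverse of another defining relation, not "$(\ast)$ again"), and the derivation of $(iii)$ from $(i)$ and $(ii)$ are all fine. The problem is that part $(ii)$ is never actually proved: you reduce it, correctly, to one cross-type equality such as $[g,h,x^{\varphi}]=[g,h^{\varphi},x]$ plus one bridge between the two $\theta$-orbits such as $[g,h,x^{\varphi}]=[g^{\varphi},h^{\varphi},x]$, and then you stop, saying these "require \ldots a careful commutator-calculus computation." Those two identities are the mathematical heart of the lemma; everything you do establish follows in a few lines from the defining relations, and what you defer is exactly what does not. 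To gauge the size of the hole: given the equality $1=3$ you already have, the cross-type equality is equivalent to the assertion that the element $[g,h^{\varphi}][h,g]$ is centralized by every $x^{\varphi}\in G^{\varphi}$, and the bridge amounts to $[a,x^{\varphi}]=[a^{\varphi},x]$ for $a=[g,h]\in G'$ (since $[g^{\varphi},h^{\varphi}]=[g,h]^{\varphi}$). Neither statement is formal; both require expanding commutators $[ab,c^{\varphi}]$, $[a^{-1},c^{\varphi}]$ via the standard identities, pushing conjugations through the defining relations $[g_1,g_2^{\varphi}]^{g_3}=[g_1^{g_3},(g_2^{g_3})^{\varphi}]$, and matching the resulting products of generators. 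Since $(iii)$ is deduced from $(ii)$, the gap propagates there as well.

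For comparison: the paper itself gives no proof of this lemma. It states the relations as "consequences of the defining relations of $\nu(G)$ and the commutator rules" and cites \cite{NR1} (Section 2) and \cite[Lemma 1.1]{BFM}. The computations you postponed are precisely the content supplied by those references, so your proposal, as written, reproves only the easy outer layer and outsources the core to a calculation that is announced but never performed.
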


Let $N$ be a normal subgroup of a finite group $G$. We denote by $K$ the subgroup $[N,G^{\varphi}] [G,N^{\varphi}] \cdot \langle N,N^{\varphi}\rangle$ in $\nu(G)$, where the dot means internal semidirect product. We set $\overline{G}$ for the quotient group $G/N$ and the canonical epimorphism $\pi: G \to \overline{G}$ gives rise to an epimorphism $\widetilde{\pi}: \nu(G) \to \nu(\overline{G})$ such that $g \mapsto \overline{g}$, $g^{\varphi} \mapsto \overline{g^{\varphi}}$, where $\overline{G^{\varphi}} = G^{\varphi}/N^{\varphi}$ is identified with $\overline{G}^{\varphi}$. 

\begin{lem}(Rocco, \cite[Proposition 2.5 and  Remark 3]{NR1})\label{lem.general} With the above notation we have 

\begin{itemize}
 \item[$(a)$] $[N,G^{\varphi}] \unlhd \nu(G)$, $[G,N^{\varphi}] \unlhd \nu(G)$;
 \item[$(b)$] $\ker(\widetilde{\pi}) = [N,G^{\varphi}] [G,N^{\varphi}] \cdot \langle N,N^{\varphi}\rangle = ([N,G^{\varphi}] [G,N^{\varphi}]\cdot  N) \cdot N^{\varphi}$.
 \item[$(c)$] There is an exact  sequence
\[
1 \rightarrow [N,G^{\varphi}][G,N^{\varphi}] \rightarrow{} [G,G^{\varphi}] \rightarrow   \left[ G/N,\left( G/N\right)^{\varphi}\right] \rightarrow 1.
\] 
 \end{itemize}
\end{lem}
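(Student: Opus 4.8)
The plan is to verify each statement by working directly with the generating set $G \cup G^{\varphi}$ of $\nu(G)$ together with the defining relations, treating $\widetilde{\pi}$ throughout as the homomorphism determined on generators by $g \mapsto \overline{g}$ and $g^{\varphi}\mapsto \overline{g}^{\varphi}$.

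For part (a), I would show that $[N,G^{\varphi}]$ is carried into itself by conjugation by every generator, which suffices for normality. This subgroup is generated by the elements $[n,g^{\varphi}]$ with $n \in N$ and $g \in G$. Conjugating by $g_3 \in G$ and by $g_3^{\varphi}$, the defining relations give $[n,g^{\varphi}]^{g_3} = [n^{g_3},(g^{g_3})^{\varphi}] = [n,g^{\varphi}]^{g_3^{\varphi}}$; since $N \unlhd G$ we have $n^{g_3}\in N$, so both conjugates again lie in $[N,G^{\varphi}]$. Hence $[N,G^{\varphi}] \unlhd \nu(G)$, and the argument for $[G,N^{\varphi}]$ is entirely symmetric.

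For part (b), write $K=[N,G^{\varphi}][G,N^{\varphi}]\cdot\langle N,N^{\varphi}\rangle$. The inclusion $K \leq \ker(\widetilde{\pi})$ is immediate, since $\widetilde{\pi}$ sends each of $n$, $n^{\varphi}$, $[n,g^{\varphi}]$ and $[g,n^{\varphi}]$ to $1$ in $\nu(\overline{G})$ because $\overline{n}=1$. For the reverse inclusion I would first check that $K \unlhd \nu(G)$: the factors $[N,G^{\varphi}]$ and $[G,N^{\varphi}]$ are normal by part (a), while conjugation of $N$ and $N^{\varphi}$ by generators produces, via $n^{g^{\varphi}}=n[n,g^{\varphi}]$ and $(n^{\varphi})^{g}=n^{\varphi}[n^{\varphi},g]$, elements lying in $K$. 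Then I would produce a homomorphism $\psi:\nu(\overline{G}) \to \nu(G)/K$ by $\overline{g}\mapsto gK$ and $\overline{g}^{\varphi}\mapsto g^{\varphi}K$: this is well defined because $N,N^{\varphi}\leq K$ makes it independent of the chosen coset representatives, and it respects the relations of $\nu(\overline{G})$ since the cosets $gK$ already satisfy the defining relations of $\nu(G)$. As $\widetilde{\pi}$ factors through $K$ to give $\overline{\widetilde{\pi}}:\nu(G)/K \to \nu(\overline{G})$, and $\psi$ and $\overline{\widetilde{\pi}}$ are mutually inverse on generators, they are inverse isomorphisms, which forces $\ker(\widetilde{\pi})=K$. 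The second description in (b) then follows by noting that $[N,N^{\varphi}] \leq [N,G^{\varphi}]$, so that $\langle N,N^{\varphi}\rangle$ contributes only $N\cdot N^{\varphi}$ modulo $[N,G^{\varphi}][G,N^{\varphi}]$, after which one regroups the product and checks triviality of the relevant intersection to read off the internal semidirect structure.

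For part (c), I would restrict $\widetilde{\pi}$ to $[G,G^{\varphi}]$. Surjectivity onto $[G/N,(G/N)^{\varphi}]$ is clear, since $\widetilde{\pi}$ maps the generator $[g,h^{\varphi}]$ to $[\overline{g},\overline{h}^{\varphi}]$. The kernel of this restriction is $[G,G^{\varphi}] \cap \ker(\widetilde{\pi}) = [G,G^{\varphi}] \cap K$, and it remains to identify this with $[N,G^{\varphi}][G,N^{\varphi}]$, one inclusion being clear. For the other I would use the structural fact that $\nu(G)/[G,G^{\varphi}] \cong G \times G$, with $G$ and $G^{\varphi}$ mapping injectively onto the two factors, so that $G \cap [G,G^{\varphi}] = G^{\varphi} \cap [G,G^{\varphi}] = 1$. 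Writing an element of $[G,G^{\varphi}] \cap K$ as $t\,n\,m^{\varphi}$ with $t \in [N,G^{\varphi}][G,N^{\varphi}] \leq [G,G^{\varphi}]$, $n \in N$ and $m^{\varphi}\in N^{\varphi}$, membership in $[G,G^{\varphi}]$ forces the image $(n,m)$ of $n m^{\varphi}$ in $G\times G$ to be trivial, whence $n=m=1$ and the element equals $t$. This gives $[G,G^{\varphi}]\cap K = [N,G^{\varphi}][G,N^{\varphi}]$ and hence the exact sequence. The main obstacle I anticipate is the reverse inclusion in part (b): establishing $\ker(\widetilde{\pi}) \subseteq K$ cleanly requires both the normality of $K$ and the identification $\nu(G)/K \cong \nu(\overline{G})$ via the universal-type argument above, together with the bookkeeping for the semidirect refinement; part (c) is then comparatively routine once the quotient $\nu(G)/[G,G^{\varphi}] \cong G\times G$ is in hand.
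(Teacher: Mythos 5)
The paper itself offers no proof of this lemma: it is quoted directly from Rocco (\cite[Proposition 2.5 and Remark 3]{NR1}), so there is no internal argument to compare yours against; your proposal is, in effect, a reconstruction of the cited proof, and it is correct. Part (a) is exactly right: the defining relations give $[n,g^{\varphi}]^{g_3}=[n^{g_3},(g^{g_3})^{\varphi}]=[n,g^{\varphi}]^{g_3^{\varphi}}$, and normality of $N$ in $G$ plus the fact that the generating set $G\cup G^{\varphi}$ is closed under inverses finishes it. In part (b), the two key steps --- normality of $K$ in $\nu(G)$ and the von Dyck construction of $\psi\colon\nu(\overline{G})\to\nu(G)/K$ inverse to the induced map $\overline{\widetilde{\pi}}$ --- are the right mechanism for the nontrivial inclusion $\ker(\widetilde{\pi})\leq K$, and your verification that $\psi$ is well defined (because $N,N^{\varphi}\leq K$) and relation-preserving (because the lifted relations already hold in $\nu(G)$, for every choice of representatives) is sound. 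Part (c) correctly reduces to the identification $[G,G^{\varphi}]\cap K=[N,G^{\varphi}][G,N^{\varphi}]$, and your appeal to the exact sequence $1\to[G,G^{\varphi}]\to\nu(G)\to G\times G\to 1$ (recorded in Remark \ref{rem:nu(G)}, likewise due to Rocco) is legitimate and gives the claim cleanly. The only point you leave dangling is the ``internal semidirect product'' refinement at the end of (b), which you defer to an unspecified intersection check: to close it, observe that $[N,G^{\varphi}][G,N^{\varphi}]\cap N=1$ since $[N,G^{\varphi}][G,N^{\varphi}]$ lies in $[G,G^{\varphi}]$, which is the kernel of $\nu(G)\to G\times G$, while $N\leq G$ maps injectively into $G\times 1$; and $\bigl([N,G^{\varphi}][G,N^{\varphi}]N\bigr)\cap N^{\varphi}=1$ because the left factor maps into $G\times 1$ whereas $N^{\varphi}$ maps injectively into $1\times G$. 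In other words, the same structural fact you invoke in (c) also completes (b), so with that sentence added your proof is complete.
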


We need the following description to the lower central series of $\nu(G)$. 

\begin{prop}\cite[Proposition 2.7]{BuenoRocco}\label{gammanu}
Let $k$ be a positive integer and $G$ a group. Then
$\gamma_{k+1}(\nu(G)) = \gamma_{k+1}(G)\gamma_{k+1}(G^{\varphi})[\gamma_{k}(G), G^{\varphi}]$.
\end{prop}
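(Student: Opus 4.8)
The plan is to argue by induction on $k$, building the lower central series of $\nu(G)$ term by term from the generating set $G\cup G^{\varphi}$ and repeatedly applying the commutator identities of Lemma \ref{basic.nu}(ii), which are the only substantial input. Throughout I will use the elementary fact that if $N\unlhd \nu(G)$ and $\nu(G)=\langle G\cup G^{\varphi}\rangle$, then $[N,\nu(G)]$ is the normal closure in $\nu(G)$ of the commutators $[u,w]$ with $u$ ranging over a generating set of $N$ and $w\in G\cup G^{\varphi}$.

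For the base case $k=1$, the subgroup $\gamma_2(\nu(G))=[\nu(G),\nu(G)]$ is the normal closure of the commutators of the generators, of which there are exactly three types: $[g,h]\in\gamma_2(G)$, $[g^{\varphi},h^{\varphi}]\in\gamma_2(G^{\varphi})$, and $[g,h^{\varphi}]\in[G,G^{\varphi}]=[\gamma_1(G),G^{\varphi}]$. By Lemma \ref{lem.general}(a) the factor $[G,G^{\varphi}]$ is normal in $\nu(G)$, and $\gamma_2(G),\gamma_2(G^{\varphi})$ are normalized modulo it, so the product $\gamma_2(G)\gamma_2(G^{\varphi})[G,G^{\varphi}]$ is a normal subgroup; it contains all the generating commutators and is itself contained in $\gamma_2(\nu(G))$, whence equality.

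For the inductive step I assume $\gamma_{k+1}(\nu(G))=\gamma_{k+1}(G)\gamma_{k+1}(G^{\varphi})[\gamma_k(G),G^{\varphi}]$ and compute $\gamma_{k+2}(\nu(G))=[\gamma_{k+1}(\nu(G)),\nu(G)]$ as the normal closure of the $[u,w]$ with $u$ a group-generator of one of the three factors and $w\in G\cup G^{\varphi}$. This yields six pieces. The two homogeneous ones, $[\gamma_{k+1}(G),G]\leq\gamma_{k+2}(G)$ and $[\gamma_{k+1}(G^{\varphi}),G^{\varphi}]\leq\gamma_{k+2}(G^{\varphi})$, together with $[\gamma_{k+1}(G),G^{\varphi}]$, already have the desired shape. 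The three remaining mixed pieces all funnel into $[\gamma_{k+1}(G),G^{\varphi}]$ through Lemma \ref{basic.nu}(ii): writing a generator of $\gamma_{k+1}(G^{\varphi})$ as $[a,h]^{\varphi}=[a^{\varphi},h^{\varphi}]$ with $a\in\gamma_k(G)$, one has $[[a^{\varphi},h^{\varphi}],x]=[[a,h],x^{\varphi}]\in[\gamma_{k+1}(G),G^{\varphi}]$, and likewise $[[a,h^{\varphi}],x]=[[a,h],x^{\varphi}]$ and $[[a,h^{\varphi}],x^{\varphi}]=[[a,h],x^{\varphi}]$ dispose of the commutators of $[\gamma_k(G),G^{\varphi}]$ with $G$ and with $G^{\varphi}$. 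Hence every generating commutator lies in $\gamma_{k+2}(G)\gamma_{k+2}(G^{\varphi})[\gamma_{k+1}(G),G^{\varphi}]$. This product is normal in $\nu(G)$ (its factor $[\gamma_{k+1}(G),G^{\varphi}]$ is normal by Lemma \ref{lem.general}(a) with $N=\gamma_{k+1}(G)$, and the two $\gamma_{k+2}$-factors are normalized modulo it), so it contains the whole normal closure; the reverse inclusion is automatic since $G,G^{\varphi}\leq\nu(G)$ force $\gamma_{k+2}(G),\gamma_{k+2}(G^{\varphi})\leq\gamma_{k+2}(\nu(G))$ and $\gamma_{k+1}(G)\leq\gamma_{k+1}(\nu(G))$ gives $[\gamma_{k+1}(G),G^{\varphi}]\leq\gamma_{k+2}(\nu(G))$. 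This closes the induction.

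I expect the genuine obstacle to be purely bookkeeping rather than conceptual: one must verify at each stage that the three-factor product is really a normal subgroup before identifying it with $\gamma_{k+2}(\nu(G))$, and must justify reducing $[\gamma_{k+1}(\nu(G)),\nu(G)]$ to commutators of generators. Both rest on the normality of lower-central terms and on Lemma \ref{lem.general}(a). The only properly group-theoretic content is the $\varphi$-shuffling of Lemma \ref{basic.nu}(ii), which is exactly what collapses all mixed commutators into the single term $[\gamma_{k+1}(G),G^{\varphi}]$.
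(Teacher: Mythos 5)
Your proof is correct. Note that the paper itself offers no argument for this proposition: it is quoted verbatim from \cite[Proposition 2.7]{BuenoRocco}, so there is no in-paper proof to compare against. Your induction — reducing $\gamma_{k+2}(\nu(G))=[\gamma_{k+1}(\nu(G)),\nu(G)]$ to the normal closure of commutators of generators against $G\cup G^{\varphi}$, and collapsing the three mixed pieces into $[\gamma_{k+1}(G),G^{\varphi}]$ via Lemma \ref{basic.nu}(ii) — is a sound, self-contained argument of exactly the kind one expects in the cited source, and all the facts you lean on (normality of $[N,G^{\varphi}]$ from Lemma \ref{lem.general}(a), the reduction of $[N,\nu(G)]$ to generators) are available and correctly invoked. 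Two small points deserve to be made explicit. First, in the base case there is a fourth type of generating commutator, $[g^{\varphi},h]$; it is harmless because $[g^{\varphi},h]=[h,g^{\varphi}]^{-1}\in[G,G^{\varphi}]$, but it should be mentioned. Second, your parenthetical claim that ``the two $\gamma_{k+2}$-factors are normalized modulo'' $[\gamma_{k+1}(G),G^{\varphi}]$ is asymmetric in difficulty: for $\gamma_{k+2}(G)$ it is immediate, since $[\gamma_{k+2}(G),G^{\varphi}]\leq[\gamma_{k+1}(G),G^{\varphi}]$, but for $\gamma_{k+2}(G^{\varphi})$ one needs $[\gamma_{k+2}(G^{\varphi}),G]\leq[\gamma_{k+1}(G),G^{\varphi}]$, which again requires the shuffling $[[d^{\varphi},h^{\varphi}],y]=[[d,h],y^{\varphi}]$ on generators together with normality of $[\gamma_{k+1}(G),G^{\varphi}]$ to handle products. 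Since this is the same manipulation you already perform for the mixed pieces, the gap is one of bookkeeping, not of substance.
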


The above result shows that if $G$ is nilpotent of class $c$, then the group $\nu(G)$ is nilpotent of class at most $c+1$. On the other hand, the coclass of the group $\nu(G)$ has a different behaviour.

\begin{rem} \label{prop.coclass}
Let $G$ be a finite $p$-group. Assume that $G$ has coclass $r$ and order $|G| = p^n$.  Then the coclass $r(\nu(G))$ is at least $r+2n-1$.
\end{rem}

\begin{proof}
First we prove that $|G| \leq |[G,G^{\varphi}]|$. Since $[G,G^{\varphi}]/\mu(G)$ is isomorphic to $G'$, it suffices to show that the order of the abelianization $|G^{ab}|$ divides $|\mu(G)|$. Indeed, by \cite[Remark 5]{NR1}, $|G^{ab}| \leq |\Delta(G)|$, where $\Delta(G) = \langle [g,g^{\varphi}] \mid g \in G\rangle \leq \mu(G)$. From this we deduce that $|\nu(G)| = p^{\alpha}\geq p^{3n}$. By Proposition \ref{gammanu}, the nilpotency class $c(\nu(G))$ of $\nu(G)$ is at most $c+1$, where $c=n-r$. Consequently,  $$r(\nu(G)) =  \alpha - c(\nu(G)) \ \geq \ 3n - (c+1) \ = \ r + 2n-1,  $$
which establishes the formula.
\end{proof}

As $\nu(G)$ is an extension of $[G,G^{\varphi}]$ by $G \times G$, we have $\exp(\nu(G))$ divides $\exp(G) \cdot \exp([G,G^{\varphi}])$.
Combining \cite{BFM} and \cite{NR2}, we deduce the following bounds for $\exp(\nu(G))$ in terms of $\exp(\mu(G))$, $\exp(M(G))$.  

\begin{rem} \label{rem:nu(G)}
Consider the following exact sequences  (Rocco, \cite{NR2}),
$$
 1 \rightarrow [G,G^{\varphi}] \rightarrow \nu(G) \rightarrow G \times G \rightarrow 1, 
$$

$$
1 \rightarrow \Theta(G) \rightarrow \nu(G) \rightarrow G \rightarrow 1
$$
and 
$$
 1 \rightarrow \Delta(G) \rightarrow \mu(G) \rightarrow M(G) \rightarrow 1,
$$
where $\Theta(G)$ is the kernel of the epimorphism $\rho: \nu(G) \to G$, given by $g \mapsto g$ and $g^{\varphi} \mapsto g$. By \cite[Section 2]{NR2}, $\mu(G) = \Theta(G) \cap [G,G^{\varphi}]$. 

Let $G$ be a finite group. By the first and second exact sequence, we deduce that $\nu(G)/\mu(G)$ is isomorphic to a subgroup of $G \times G \times G$ and so, $\exp(\nu(G))$ divides $\exp(G) \cdot \exp(\mu(G))$. Moreover, by the third exact sequence, we conclude that $\exp(\mu(G))$ divides $\exp(M(G)) \cdot \exp(\Delta(G))$. Moreover, as $[g^j,g^{\varphi}] = [g,g^{\varphi}]^j$ for any $g \in G$, we have $\exp(\Delta(G))$ divides $\exp(G)$. Consequently, $$\exp(\nu(G)) \ \text{divides} \  \exp(G)
^2 \cdot \exp(M(G)).$$ 
Assume that $2$ does not divide $|G^{ab}|$, where $G^{ab}=G/G'$. According to \cite[Corollary 1.4]{BFM}, we deduce that $\mu(G) \cong M(G) \times \Delta(G)$ and so, $$\exp(\nu(G)) \ \text{divides} \ \exp(G) \cdot \max \{\exp(G), \exp(M(G))\}.$$  
\end{rem}


\section{Power-commutator conditions and the exponent of the lower central terms of $\nu(G)$}

Under the hypothesis of Theorem~\ref{thmA} we will prove the following proposition.
\begin{prop}\label{prop.conditions}
\begin{itemize}
\ 
    \item[(1)] For every odd prime $p$ or $i>m$ we have $\gamma_{i+s+1}(\nu(G)) \leq \gamma_{i+1}(\nu(G))^p$.
    \item[(2)] For every prime $p$ and $i\geq m$ we have $\gamma_{i+1}(\nu(G))^p \leq \gamma_{i+s+1}(\nu(G))$.
\end{itemize}
\end{prop}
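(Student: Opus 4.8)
The plan rests on two tools: the decomposition $\gamma_{k+1}(\nu(G)) = \gamma_{k+1}(G)\gamma_{k+1}(G^{\varphi})[\gamma_{k}(G), G^{\varphi}]$ from Proposition \ref{gammanu}, and P. Hall's collection formula (Theorem \ref{thm.Hall}), whose second identity gives, for $a \in \gamma_i(G)$ and $h \in G$, the congruence $[a^p, h^{\varphi}] \equiv [a, h^{\varphi}]^p \pmod{\gamma_2(L')^p \gamma_p(L')}$ with $L' = \langle a, [a, h^{\varphi}]\rangle$ inside $\nu(G)$. Writing the hypothesis of Theorem \ref{thmA} as $\gamma_{j+s}(G) = \gamma_j(G)^p$ for $j \geq m$, the decomposition sends the three factors of $\gamma_{i+s+1}(\nu(G))$ to $\gamma_{i+1}(G)^p$, $\gamma_{i+1}(G^{\varphi})^p$ and $[\gamma_i(G)^p, G^{\varphi}]$; since $\gamma_{i+1}(G), \gamma_{i+1}(G^{\varphi}) \leq \gamma_{i+1}(\nu(G))$, the heart of both inclusions is the comparison of $[\gamma_i(G)^p, G^{\varphi}]$ with $[\gamma_i(G), G^{\varphi}]^p$. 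Because $\gamma_{i+1}(\nu(G))^p$ is characteristic in $\nu(G)$ and the mixed subgroups $[\gamma_i(G), G^{\varphi}]$, $[\gamma_i(G)^p, G^{\varphi}]=[\gamma_{i+s}(G),G^\varphi]$ are normal (Lemma \ref{lem.general}), it suffices to argue on the generators $[a, h^{\varphi}]$, $[a^p, h^{\varphi}]$ and transfer between them by the congruence above. Throughout, indices are controlled by $\nu(G)$-weight, using that $a$ has weight $i$ and $[a,h^{\varphi}]$ weight $i+1$.

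For part (2) no induction is needed. Given $w \in \gamma_{i+1}(\nu(G))$, I write $w = w_1 \cdots w_r$ with each $w_j$ a generator of one of the three factors, set $L = \langle w_1, \ldots, w_r\rangle \le \gamma_{i+1}(\nu(G))$, and expand $w^p \equiv w_1^p \cdots w_r^p \pmod{\gamma_2(L)^p \gamma_p(L)}$ by Corollary \ref{cor.Hall}. A power $w_j^p$ arising from $\gamma_{i+1}(G)$ or $\gamma_{i+1}(G^{\varphi})$ lies in $\gamma_{i+1+s}(G) \le \gamma_{i+s+1}(\nu(G))$, while for $w_j = [a, h^{\varphi}]$ the congruence replaces $w_j^p$ by $[a^p, h^{\varphi}] \in [\gamma_{i+s}(G), G^{\varphi}] \le \gamma_{i+s+1}(\nu(G))$ up to error. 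Every error term is a $\gamma_2$ or a $\gamma_p$ of a subgroup whose generators have $\nu(G)$-weight at least $i$, so a direct weight count places all of them in $\gamma_{i+s+1}(\nu(G))$: indeed $i \ge m \ge s$ forces $2i+1,\ pi+1,\ 2(i+1),\ p(i+1) \ge i+s+1$. Hence $w^p \in \gamma_{i+s+1}(\nu(G))$ for every $w$, giving $\gamma_{i+1}(\nu(G))^p \le \gamma_{i+s+1}(\nu(G))$ for all primes $p$ and $i \ge m$.

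For part (1) I decompose the left-hand side and must place each factor of $\gamma_{i+s+1}(\nu(G))$ inside $\gamma_{i+1}(\nu(G))^p$. The factors $\gamma_{i+1}(G)^p$ and $\gamma_{i+1}(G^{\varphi})^p$ are immediate from $\gamma_{i+1}(G),\gamma_{i+1}(G^\varphi) \le \gamma_{i+1}(\nu(G))$, and in the mixed factor the main term $[a, h^{\varphi}]^p$ and the error $\gamma_2(L')^p$ are harmless, since $\gamma_2(L') \le \gamma_{2i+1}(\nu(G)) \le \gamma_{i+1}(\nu(G))$ yields $\gamma_2(L')^p \le \gamma_{i+1}(\nu(G))^p$. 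The one real obstacle is the factor $\gamma_p(L')$: as every nontrivial weight-$p$ commutator in $a$ and $[a,h^{\varphi}]$ must use at least one copy of the heavier generator, I only get $\gamma_p(L') \le \gamma_{pi+1}(\nu(G))$, and there is no elementary containment of $\gamma_{pi+1}(\nu(G))$ in the power $\gamma_{i+1}(\nu(G))^p$. I would overcome this by a descending induction on $i$, well founded because $\nu(G)$ is nilpotent, so $P(i)\colon \gamma_{i+s+1}(\nu(G)) \le \gamma_{i+1}(\nu(G))^p$ is vacuous once $i+s+1$ exceeds the class. Writing $pi+1 = (pi-s)+s+1$ and applying the inductive instance $P(pi-s)$ at the strictly larger index $pi-s$ yields $\gamma_{pi+1}(\nu(G)) \le \gamma_{(pi-s)+1}(\nu(G))^p$, whence $\gamma_{(pi-s)+1}(\nu(G)) \le \gamma_{i+1}(\nu(G))$ gives $\gamma_p(L') \le \gamma_{i+1}(\nu(G))^p$, completing $P(i)$.

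This induction is legitimate precisely when $pi-s$ is a strictly larger admissible index, i.e. when $(p-1)i > s$, since only then may $P(pi-s)$ be invoked. For odd $p$ this is automatic, as $(p-1)i \ge 2i \ge 2m \ge 2s > s$; for $p=2$ it reduces to $i > s$, which is exactly what the hypothesis $i > m \ge s$ secures. This accounts for the dichotomy ``odd prime or $i>m$'' in part (1). I expect the control of the $\gamma_p(L')$-term, together with checking that $pi-s$ stays in the admissible range, to be the crux of the argument; the remaining steps are routine weight bookkeeping with the collection formula.
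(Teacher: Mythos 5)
Your proposal is correct, and your part (2) is essentially the paper's own argument: both reduce to Hall's collection formula (Theorem \ref{thm.Hall} and Corollary \ref{cor.Hall}) applied to the decomposition of Proposition \ref{gammanu}, with every error term absorbed by the weight count $2i+1\geq i+s+1$ coming from $i\geq m\geq s$; the paper merely organizes this in two passes (first $w=\alpha\beta\delta$, then each commutator factor of $\delta$) where you use one. The interesting divergence is in part (1). The skeleton is the same --- reduce to $[\gamma_i(G)^p,G^{\varphi}]\leq\gamma_{i+1}(\nu(G))^p$ and apply the second Hall identity to a generator $[a^p,h^{\varphi}]$ --- but the critical term $\gamma_p(L)$ is killed by genuinely different means. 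The paper proves $\gamma_p(L)\leq[\gamma_i(G)^p,G^{\varphi},\nu(G)]$ (for odd $p$ via the inequality $2i+p-3\geq i+s$; for $p=2$ and $i>m$ by re-expanding $\gamma_{i+s+2}(\nu(G))$ with Proposition \ref{gammanu}), so that $N=[\gamma_i(G)^p,G^{\varphi}]$ satisfies $N\leq M[N,\nu(G)]$ with $M=\gamma_{i+1}(\nu(G))^p$, and then invokes Lemma \ref{normalinc}; this single stroke also disposes of the passage from the individual commutators $[x^p,y^{\varphi}]$ to the whole subgroup $N$. You instead bound $\gamma_p(L')\leq\gamma_{pi+1}(\nu(G))$ by a pure weight count and close the loop with a descending induction, invoking the proposition itself at the strictly larger admissible index $pi-s$ (well-founded because $\nu(G)$ is nilpotent), while normality of $\gamma_{i+1}(\nu(G))^p$ in $\nu(G)$ stands in for Lemma \ref{normalinc} in passing from generators to all of $N$. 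Your verification that $pi-s$ is admissible and exceeds $i$ --- i.e.\ $(p-1)i>s$, automatic for odd $p$ and equivalent to $i>s$ (secured by $i>m\geq s$) for $p=2$ --- is exactly what reproduces the paper's dichotomy ``odd $p$ or $i>m$''. As for what each route buys: yours is more self-contained (no Frattini-type Lemma \ref{normalinc}, no commutator identities of Lemma \ref{basic.nu} in this step) and makes the role of the hypothesis transparent; the paper's is local in $i$, requiring no bootstrapping from higher terms of the lower central series, and its intermediate containments of the form $[\,\cdot\,,G^{\varphi},\nu(G)]$ are the standard currency of this literature, reused in later arguments such as Proposition \ref{propN}.
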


\begin{proof}
 (1) We start proving that $\gamma_{i+s+1}(\nu(G)) \leq \gamma_{i+1}(\nu(G))^p.$ From Proposition~\ref{gammanu} and by hypothesis we have
\begin{align*}
\gamma_{i+s+1}(\nu(G)) &= \gamma_{i+s+1}(G)\  \gamma_{i+s+1}(G^{\varphi})\ [\gamma_{i+s}(G), G^{\varphi}]\\
&= \gamma_{i+1}(G)^p \ \gamma_{i+1}(G^{\varphi})^p \ [\gamma_i(G)^p, G^{\varphi}].
\end{align*}

Since both $\gamma_{i+1}(G)^p$ and $\gamma_{i+1}(G^{\varphi})^p$ are contained in $\gamma_{i+1}(\nu(G))^p$, it suffices to prove that $[\gamma_{i}(G)^p, G^{\varphi}] \leq \gamma_{i+1}(\nu(G))^p$. For, let $x \in \gamma_i(G)$ and $y^{\varphi} \in G^{\varphi}$. Then, applying Theorem~\ref{thm.Hall} we have 
\[
[x^p, y^{\varphi}] \equiv [x,y^{\varphi}]^p  \pmod{\gamma_2(L)^p \ \gamma_p(L)},
\]
where $L= \langle x, [x,y^{\varphi}]\rangle$. On the one hand, 
\[
\gamma_2(L)^p \leq [\gamma_i(G), G^{\varphi}, \gamma_i(G)]^p \leq \gamma_{2i+1}(\nu(G))^p.
\]

For every prime $p$, if $i>m$
\begin{align*}
 \gamma_2(L) &\leq [\gamma_i(G), G^{\varphi}, \gamma_i(G)] \leq \gamma_{i+s+2}(\nu(G))\\
&=\gamma_{i+s+2}(G)\gamma_{i+s+2}(G^{\varphi})[\gamma_{i+s+1}(G),G^{\varphi}]\\
&\leq \gamma_{i+1}(\nu(G))^p[\gamma_{i}(G)^p,G,G^{\varphi}]  .   
\end{align*}

On the other hand, $p \geq 3$ implies $2i+p-3 \geq i+s$ and we have
\begin{align*}
\gamma_p(L) & \leq [\gamma_i(G), G^{\varphi}, \gamma_i(G),_{p-2} \nu(G)] = [\gamma_{i+1}(G), \gamma_i(G^{\varphi}),_{p-2} \nu(G)]\\[2mm]
& \leq [\gamma_{2i+1}(G),_{p-3} \nu(G), G^{\varphi}] \leq [\gamma_{2i+p-3}(G), G^{\varphi}, G^{\varphi}] \\[2mm]
& \leq [\gamma_{2i+p-3}(G), G^{\varphi}, \nu(G)] \leq [\gamma_{s+i}(G), G^{\varphi}, \nu(G)] = [\gamma_i(G)^p, G^{\varphi}, \nu(G)].
\end{align*}
Therefore, it follows that \[ [x^p, y^{\varphi}] \in \gamma_{i+1}(\nu(G))^p  [\gamma_i(G)^p, G^{\varphi}, \nu(G)] \] which yields
\[
[\gamma_{i}(G)^p, G^{\varphi}] \leq	\gamma_{i+1}(\nu(G))^p  [\gamma_i(G)^p, G^{\varphi}, \nu(G)].
\]
Applying Lemma~\ref{normalinc} with $N=[\gamma_{i}(G)^p, G^{\varphi}]$ and $M=\gamma_{i+1}(\nu(G))^p$, we can conclude that  $[\gamma_{i}(G)^p, G^{\varphi}] \leq \gamma_{i+1}(\nu(G))^p$.\\
(2) In order to prove that $\gamma_{i+1}(\nu(G))^p \leq \gamma_{i+s+1}(\nu(G))$, consider the subgroup $W=\gamma_{i+1}(G)^p \ \gamma_{i+1}(G^{\varphi})^p \ [\gamma_i(G), G^{\varphi}]^p$. Firstly, we show that
\[
W \equiv \gamma_{i+1}(\nu(G))^p \pmod{\gamma_{i+s+1}(\nu(G))}.
\]
By definition, $W \leq \gamma_{i+1}(\nu(G))^p\leq \gamma_{i+1}(\nu(G))^p\gamma_{i+s+1}(\nu(G))$, so we only need to prove that $\gamma_{i+1}(\nu(G))^p \leq W \gamma_{i+s+1}(\nu(G))$. For, let $\alpha \in \gamma_{i+1}(G)$, $\beta \in \gamma_{i+1}(G^{\varphi})$ and $\delta \in [\gamma_i(G), G^{\varphi}]$. Then, applying Corollary~\ref{cor.Hall} we have
\[
(\alpha \beta \delta)^p \equiv \alpha^p \beta^p \delta^p \pmod{ \gamma_2(J)^p \gamma_p(J)},
\]
where $J=\langle \alpha, \beta, \delta \rangle$. It is straightforward to see that $\alpha^p \beta^p \delta^p \in W$. Moreover, all the generators of $\gamma_2(J)$ belong to $\gamma_{i+s+1}(\nu(G))$. Indeed, 
\begin{align*}
&[\alpha, \beta] \in [\gamma_{i+1}(G), \gamma_{i+1}(G^{\varphi})] \leq [\gamma_{i+1}(\nu(G)), \gamma_{i+1}(\nu(G))] \leq \gamma_{i+s+1}(\nu(G));\\[2mm]
&[\delta, \alpha] \in [\gamma_{i}(G), G^{\varphi}, \gamma_{i+1}(G)] \leq [\gamma_{i+1}(\nu(G)), \gamma_{i+1}(\nu(G)] \leq \gamma_{i+s+1}(\nu(G));\\[2mm]
&[\delta, \beta] \in [\gamma_{i}(G), G^{\varphi}, \gamma_{i+1}(G^{\varphi})] \leq [\gamma_{i+1}(\nu(G)), \gamma_{i+1}(\nu(G)] \leq \gamma_{i+s+1}(\nu(G)).
\end{align*}
Therefore, $\gamma_2(J)^p \leq \gamma_2(J) \leq \gamma_{i+s+1}(\nu(G))$. Furthermore, $\gamma_p(J) \leq  \gamma_2(J) \leq \gamma_{i+s+1}(\nu(G)).$
It follows that $(\alpha \beta \delta)^p \in W \gamma_{i+s+1}(\nu(G))$, and so
\[
\gamma_{i+1}(\nu(G))^p \leq W \gamma_{i+s+1}(\nu(G)).
\]

To conclude, we prove that $W \leq \gamma_{i+s+1}(\nu(G))$, that is, $[\gamma_i(G), G^{\varphi}]^p \leq \gamma_{i+s+1}(\nu(G))$. Let $\alpha=\alpha_1^p\ldots\alpha_n^p\in [\gamma_i(G), G^{\varphi}]^p$, where each $\alpha_j\in [\gamma_i(G), G^{\varphi}]$. We can write $\alpha_j=[x_{j1}, y_{j1}^{\varphi}]\ldots [x_{jl}, y_{jl}^{\varphi}]$, with $x_{jk}\in \gamma_i(G)$ and $y_{jk}^\varphi\in G^\varphi$, for all $k\in\{1, \ldots, l\}$, where $l$ depends on $j$. Applying Corollary~\ref{cor.Hall}
\[
([x_{j1}, y_{j1}^{\varphi}]\ldots [x_{jl}, y_{jl}^{\varphi}])^p \equiv [x_{j1}, y_{j1}^{\varphi}]^p\ldots [x_{jl}, y_{jl}^{\varphi}]^p \pmod{ \gamma_2(S)^p \gamma_p(S)},
\]
where $S=\langle [x_{j1}, y_{j1}^{\varphi}], \ldots, [x_{jl}, y_{jl}^{\varphi}] \rangle$. Observe that
\begin{align*}
&\gamma_2(S)^p \gamma_p(S) \leq \gamma_2(S) \leq [\gamma_i(G), G^{\varphi}, [\gamma_i(G), G^{\varphi}]] \leq  \gamma_{2i+1}(\nu(G))  \leq \gamma_{i+s+1}(\nu(G)).
\end{align*}

Furthermore each element $[x_{jk}, y_{jk}^{\varphi}]^p\in \gamma_{i+s+1}(\nu(G))$. Indeed by Proposition~\ref{thm.Hall} we have
\[
[x_{jk},y_{jk}^{\varphi}]^p \equiv [x_{jk}^p,y_{jk}^{\varphi}] \pmod{\gamma_2(K)^p \gamma_p(K)},
\]
where $K=\langle x_{jk}, [x_{jk},y_{jk}^{\varphi}]\rangle$. Observe that 
\begin{align*}
&[x_{jk}^p, y_{jk}^{\varphi}] \in [\gamma_i(G)^p, G^{\varphi}] \leq \gamma_{i+s+1}(\nu(G));\\[2mm]
&\gamma_2(K)^p \gamma_p(K) \leq \gamma_2(K) \leq [\gamma_i(G), G^{\varphi}, \gamma_i(G)] \leq  \gamma_{2i+1}(\nu(G))  \leq \gamma_{i+s+1}(\nu(G)). 
\end{align*}
This means that each $\alpha_j^p\in \gamma_{i+s+1}(\nu(G))$, so $\alpha \in \gamma_{i+s+1}(\nu(G))$. Therefore $[\gamma_i(G), G^{\varphi}]^p \leq \gamma_{i+s+1}(\nu(G))$. This concludes the proof. 
\end{proof}

We are now in a position to complete the proof of Theorem \ref{thmA}.

\begin{proof}[Proof of Theorem \ref{thmA}]
\noindent (a) follows directly from Proposition~\ref{prop.conditions}. Therefore, we need to prove (b) and (c). 

First notice that if $p$ is odd, then $\gamma_{m}(G)$ is a powerful $p$-group. Indeed, by hypothesis we have $$[\gamma_{m}(G),\gamma_{m}(G)]\leq \gamma_{2m}(G)\leq \gamma_{m+s}(G)=\gamma_{m}(G)^p.$$ 

Therefore, we can assume that $\gamma_{m}(G)$ is a powerful $p$-group  for every $p$, and we prove (b) and (c) simultaneously. Now,  by Lemma \ref{power} we have $\Pi_j(\gamma_{m}(G)))=\gamma_{m}(G)^{p^{j}}$ for all $j\geq 1$ and for every $p$. Let $p^t$ be the exponent of $\gamma_m(G)$. Thus
\[
\gamma_{m+ts}(G)=~\gamma_m(G)^{p^t}=~1.
\]
Therefore, from Proposition \ref{gammanu} we obtain that $\gamma_{m+ts+1}(\nu(G))=~1$. On the other hand, by item (a), we have $\gamma_{i+1}(\nu(G))^p\leq \gamma_{i+s+1}(\nu(G))$ for $i\geq m$. Therefore $$\gamma_{m+1}(\nu(G))^{p^t}\leq\Pi_t(\gamma_{m+1}(\nu(G))) \leq \gamma_{m+ts+1}(\nu(G))=1,$$ and the proof is concluded.
\end{proof}


\section{The exponent of $\nu(G)$}

Throughout the sequel $N$ denotes a normal subgroup of a finite $p$-group $G$. For the sake of brevity, we write $K = NN^{\varphi}[N,G^{\varphi}][G,N^{\varphi}]$ instead of $\ker(\widetilde{\pi})$ (see Lemma \ref{lem.general}, above). 

\begin{prop}\label{propN}
\ \ 
\begin{itemize}
\item[(a)] $\gamma_s(K)= \gamma_s(N)\gamma_s(N^{\varphi})[\gamma_{s-1}(N),N^{\varphi}][N,\gamma_{s-1}(N^{\varphi})]$ for $s\geq 2$.
    \item[(b)] If $p\geq 3$, $n\in \mathbb{N}$ such that $1<n<p$ and $\gamma_n(N)\leqslant N^p$, then  $\gamma_{n+1}(K)\leq \gamma_2(N)^p\gamma_2(N^{\varphi})^p[N,N^{\varphi}]^p$;  
    \item[(c)] If $p=2$ and $N$ is powerful, then $\gamma_3(K)\leq \gamma_2(N)^4\gamma_2(N^{\varphi})^4[N,N^{\varphi}]^4$.
\end{itemize}
\end{prop}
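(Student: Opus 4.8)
\textbf{Proof proposal for Proposition \ref{propN}.}

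The plan is to treat the three parts through a single mechanism: compute the relevant lower central terms of $K$ using its explicit generating set and the commutator identities of Lemma \ref{basic.nu}, then feed the resulting products into P. Hall's machinery (Theorem \ref{thm.Hall} and Lemma \ref{lem.hallformula}) to turn lower-central containment into a power condition. For part (a) I would argue by induction on $s$. The base case $s=2$ is a direct expansion: since $K = NN^{\varphi}[N,G^{\varphi}][G,N^{\varphi}]$, every commutator of two generators lands in one of the four advertised pieces, where the crucial reductions are the relations $[g,h^{\varphi},x]=[g,h,x^{\varphi}]$ and their symmetric variants from Lemma \ref{basic.nu}$(ii)$, which collapse mixed commutators such as $[[n_1,g^{\varphi}],n_2]$ into terms lying inside $[\gamma_2(N),N^{\varphi}]$ or $\gamma_2(N^\varphi)$. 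For the inductive step I would use $\gamma_{s}(K)=[\gamma_{s-1}(K),K]$, substitute the inductive description of $\gamma_{s-1}(K)$, and expand each bracket against the four generator types of $K$; the identity $(iii)$ of Lemma \ref{basic.nu}, which identifies $[[g,h^\varphi],[x,y^\varphi]]$ with $[[g,h],[x,y]^\varphi]$, is what keeps the mixed tensor-type commutators from producing new kinds of terms.

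For parts (b) and (c) I would start from the description in (a), specialized to $s=n+1$ in the odd case and $s=3$ in the case $p=2$. Concretely, $\gamma_{n+1}(K)=\gamma_{n+1}(N)\gamma_{n+1}(N^\varphi)[\gamma_n(N),N^\varphi][N,\gamma_n(N^\varphi)]$. The hypothesis $\gamma_n(N)\le N^p$ (resp. $N$ powerful with $N'\le N^4$) immediately gives $\gamma_{n+1}(N)\le\gamma_2(N)^{\p}$ using Lemma \ref{lem.lower.potent}, and identically for $N^\varphi$, so the two ``pure'' factors are already inside the target subgroup $\gamma_2(N)^{\p}\gamma_2(N^\varphi)^{\p}[N,N^\varphi]^{\p}$. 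The work concentrates on the mixed factor $[\gamma_n(N),N^\varphi]$: here I would invoke $\gamma_n(N)\le N^p$ to write this inside $[N^p,N^\varphi]$, then apply Lemma \ref{lem.hallformula} with the roles of $N^p$ and $N^\varphi$, which yields $[N^p,N^\varphi]\equiv[N,N^\varphi]^p$ modulo correction terms of the form $[N^\varphi,{}_pN]^{p^{k-1}}\cdots$. The point is that each correction term is an iterated commutator of weight at least $p+1$ sitting in $K$, hence by part (a) already lies in a sufficiently high $\gamma_j(K)$; controlling these correction terms is where I expect the real friction.

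The main obstacle, then, is verifying that the Hall-collection correction terms for the mixed factor are absorbed into $\gamma_2(N)^{\p}\gamma_2(N^\varphi)^{\p}[N,N^\varphi]^{\p}$ rather than merely into some higher $\gamma_j(K)$. The natural device is Lemma \ref{normalinc}: I would set up a congruence showing $[\gamma_n(N),N^\varphi]\le M\,[\text{stuff}]\,(\text{stuff})^p$ with $M=\gamma_2(N)^{\p}\gamma_2(N^\varphi)^{\p}[N,N^\varphi]^{\p}$, check that all leftover commutator and $p$-power debris is normal in $\nu(G)$ and contained in $M[\,\cdot\,,G]\,(\,\cdot\,)^p$, and then conclude $[\gamma_n(N),N^\varphi]\le M$. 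The constraint $1<n<p$ in (b) is exactly what guarantees $n+1\le p$, so that weight-$(p+1)$ debris genuinely vanishes or folds into the $p$-th power layer; in the $p=2$ case the powerful hypothesis plays the analogous structural role. A secondary subtlety is bookkeeping the $\varphi$-decorated analogues symmetrically, but those follow by the evident symmetry $g\leftrightarrow g^\varphi$ together with Lemma \ref{basic.nu}, so I would prove the statement for one mixed factor and remark that the other is identical.
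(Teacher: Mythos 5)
Your outline for part (a) is essentially the paper's proof (induction on $s$, expanding commutators of the generators of $K$ via Lemma \ref{basic.nu}), and your global strategy for (b) and (c) --- decompose $\gamma_{n+1}(K)$ by part (a), then absorb each factor into $\gamma_2(N)^{p}\gamma_2(N^{\varphi})^{p}[N,N^{\varphi}]^{p}$ via Hall collection plus Lemma \ref{normalinc} --- is also the paper's. But two of your concrete steps fail. First, for $p\geq 3$ the pure factors are \emph{not} immediate from Lemma \ref{lem.lower.potent}. For $p$ odd that lemma reads $\gamma_{p-1+k}(N)\leq\gamma_{k+1}(N)^p$; taking $k=1$ it bounds $\gamma_p(N)$, whereas you need to bound $\gamma_{n+1}(N)$, which \emph{contains} $\gamma_p(N)$ because $n+1\leq p$. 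So the cited inclusion points the wrong way (only the extreme case $n=p-1$ is covered); your use of the lemma in the case $p=2$ of part (c) is fine, since there it gives $\gamma_3(N)\leq\gamma_2(N)^4$ directly. The paper instead writes $\gamma_{n+1}(N)\leq[\gamma_n(N),N]\leq[N^p,N]$ and proves $[N^p,N]\leq\gamma_2(N)^p$ by Lemma \ref{lem.hallformula} --- legitimately, since $N\trianglelefteq G$ --- together with Lemma \ref{normalinc}, using $n\leq p-1$ to push the correction term $[N,{}_pN]=[\gamma_{p-1}(N),N,N]$ into $[N^p,N,N]$. In other words, the pure factors require exactly the collection-plus-Lemma-\ref{normalinc} work that you reserve for the mixed factor; they are not free.

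Second, and more seriously, you cannot invoke Lemma \ref{lem.hallformula} for the pair $(N^p,N^{\varphi})$: that lemma requires both subgroups to be normal in a common finite $p$-group, and here there is no such ambient group. Indeed $N$ is not normal in $\nu(G)$, nor even in $K$ or in $\langle N,N^{\varphi}\rangle$: for $n,m\in N$ one has $n^{m^{\varphi}}=n[n,m^{\varphi}]$ with $[n,m^{\varphi}]\in[N,N^{\varphi}]$, and $[N,N^{\varphi}]\cap N=1$ inside $\nu(G)$, so $n^{m^{\varphi}}\notin N$ unless the commutator is trivial. Hence the congruence $[N^p,N^{\varphi}]\equiv[N,N^{\varphi}]^p$ modulo correction terms cannot be obtained by citing that lemma. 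The paper's substitute is the element-wise formula of Theorem \ref{thm.Hall}: for $x,y\in N$ it gives $[x^p,y^{\varphi}]\equiv[x,y^{\varphi}]^p \pmod{\gamma_2(L)^p\gamma_p(L)}$ with $L=\langle x,[x,y^{\varphi}]\rangle$; then Lemma \ref{basic.nu}(ii) yields $\gamma_2(L)^p\leq[N,N,N^{\varphi}]^p\leq[N,N^{\varphi}]^p$ and $\gamma_p(L)\leq[\gamma_{p-1}(N),N,N^{\varphi}]\leq[N^p,N^{\varphi},\nu(G)]$, and only after collecting over all $x,y$ --- which is possible because the target subgroups are normal in $\nu(G)$ --- does one apply Lemma \ref{normalinc} to the normal subgroups $[N^p,N^{\varphi}]$ and $[N,N^{\varphi}]^p$ of $\nu(G)$. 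Your closing Lemma \ref{normalinc} step is the right idea, but it must be fed by this element-wise argument rather than by Lemma \ref{lem.hallformula}.
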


\begin{proof}
(a) Clearly  $$ \gamma_s(N)\gamma_s(N^{\varphi})[\gamma_{s-1}(N),N^{\varphi}][N,\gamma_{s-1}(N^{\varphi})] \leq \gamma_s(K),$$ for every $s\geq 2$. To prove the other inclusion we argue by induction on~ $s$.

Let $X=\{n_1,n_2^{\varphi},[n_3,g_1^{\varphi}],[g_2,n_4^{\varphi}] \ | \ n_i\in N, g_j\in G\}$ be a set of generators of $K$. Assume that $s=2$. It suffices to show that each commutator of weight 2 in the generators belongs to $\gamma_2(N)\gamma_2(N^{\varphi})[N,N^{\varphi}]$ since it is a normal subgroup of $\nu(G)$. 

Let $n_1, n_2\in N$, $g_1, g_2 \in G$ and $n = [n_1,g_1], n'=[g_2,n_2] \in N$. Then 
\begin{align*}
&[n_1, n_2^{\varphi}] \in [N,N^{\varphi}];\\[2mm]
&[n_1, g_1^{\varphi},n_2] =[n_1, g_1,n_2^{\varphi}]=[n^{-1},n_2^{\varphi}]\in [N, N^{\varphi }];\\[2mm]
&[g_2,n_2^{\varphi},n_1]=[g_2,n_2,n_1^{\varphi}]=[n',n_2^{\varphi}] \in [N, N^{\varphi }];\\[2mm]
&[[g_2,n_2^{\varphi}],[n_1, g_1^{\varphi}]]=[n',n^{\varphi}]\in [N, N^{\varphi }].
\end{align*}

Of course, $[n_1,n_2]\in \gamma_2(N)$ and $[n_1^{\varphi},n_2^{\varphi}]\in \gamma_2(N^{\varphi})$. Then for $s=2$ the inclusion holds.
Now assume $s \geq 2$ and suppose by induction hypothesis that $\gamma_s(K)= \gamma_s(N)\gamma_s(N^{\varphi})[\gamma_{s-1}(N),N^{\varphi}][N,\gamma_{s-1}(N^{\varphi})]$. In particular $Y=\{x, x^{\varphi}, [y_1, n_1^{\varphi}], [n_2, y_2^{\varphi}] \ | \ x \in \gamma_s(N), y_i \in \gamma_{s-1}(N), n_i \in N\}$ is a set of generators for $\gamma_s(K)$. Therefore we need to show that $[\alpha, \beta] \in \gamma_{s+1}(N)\gamma_{s+1}(N^{\varphi})[\gamma_{s}(N),N^{\varphi}][N,\gamma_{s}(N^{\varphi})]$ for every $\alpha \in X$ and $\beta \in Y$.
Let $x\in \gamma_s(N)$, $y\in \gamma_{s-1}(N)$,  $m,n,n'\in N$ and $g_1, g_2 \in G$ and $n = [n_1,g_1], n'=[g_2,n_2] \in N$. Then

\begin{align*}
&[x, m] \in \gamma_{s+1}(N); \hspace{0.5cm}[x,m^{\varphi}]\in [\gamma_{s}(N),N^{\varphi}];\\[2mm]
&[x^{\varphi}, m] \in [\gamma_s(N^\varphi),N];\hspace{0.5cm}[x^{\varphi},m^{\varphi}]\in \gamma_{s+1}(N^{\varphi});\\[2mm]
&[n_1, g_1^{\varphi},x] =[n_1, g_1, x^\varphi]=[n,x^{\varphi}]\in [N, \gamma_s(N^{\varphi })];\\[2mm]
&[g_2,n_2^{\varphi},x]=[g_2, n_2, x^\varphi]=[n',x^{\varphi}] \in [N, \gamma_s(N^{\varphi })];
\\[2mm]
&[[y, m^{\varphi}],n_1]=[[y, m], n_1^\varphi] \in [\gamma_s(N), N^\varphi];
\\[2mm]
&[[y, m^{\varphi}],[n_1,g_1^{\varphi}]]=[[y, m], n^{\varphi}]\in [\gamma_s(N), N^{\varphi }];\\[2mm]
&[[y, m^{\varphi}],[g_2, n_2^{\varphi}]]=[[y,m],(n')^{\varphi}]\in [\gamma_s(N), N^{\varphi }];\\[2mm]
&[[m, y^{\varphi}],n_1]=[[m, y], n_1^\varphi] \in [\gamma_s(N), N^\varphi];\\[2mm]
&[[m, y^{\varphi}],[n_1,g_1^{\varphi}]]=[[m,y],n^{\varphi}] \in [\gamma_s(N), N^\varphi];\\[2mm]
&[[m, y^{\varphi}],[g_2, n_2^{\varphi}]]=[[m,y],(n')^{\varphi}] \in [\gamma_s(N), N^\varphi].
\end{align*}
This suffices to conclude that $\gamma_{s+1}(K)\leq \gamma_{s+1}(N)\gamma_{s+1}(N^{\varphi})[\gamma_{s}(N),N^{\varphi}][N,\gamma_{s}(N^{\varphi})]$, and we are done.\\ 

\noindent (b) Consider $p\geq3$ and $1<n<p$, with $n\in \mathbb{N}$. By the previous item we have $\gamma_{n+1}(K) = \gamma_{n+1}(N) \gamma_{n+1}(N^\varphi) [\gamma_n(N), N^{\varphi}] [N, \gamma_n(N^\varphi)]$. Since $\gamma_n(N)\leq N^p$ follows that $$\gamma_{n+1}(K)\leq [N^p, N] [(N^{\varphi})^p, N^{\varphi}] [N^p, N^{\varphi}] [N, (N^{\varphi})^p].$$

First we will prove that $[N^p, N]\leq \gamma_2(N)^p$. Since $n<p$, by Lemma \ref{lem.hallformula} we have 
$$\begin{array}{ccl}
     [N^p, N] & \leq &  [N, N]^p [N,\ _p\ N] = \gamma_2(N)^p[\gamma_{p-1}(N), N, N] \\
     & \leq & \gamma_2(N)^p[\gamma_n(N), N, N] \leq \gamma_2(N)^p[N^p, N, N].
\end{array}$$

Applying Lemma~\ref{normalinc} to $[N^p, N]$ and $[N, N]^p$, we deduce that $[N^p, N] \leq \gamma_2(N)^p$. Clearly, in the same way we have $[(N^{\varphi})^p, N^{\varphi}] \leq \gamma_{2}(N^{\varphi})^p$.

Now, it remains to prove that $[N^p, N^{\varphi}] \leq [N, N^{\varphi}]^p$. For, let $x,y\in N$. By Theorem~\ref{thm.Hall}, 
$$[x^p, y^\varphi] \equiv\ [x, y^\varphi]^p (\text{mod}\ \gamma_2(L)^p\gamma_p(L)),$$
where $L=\langle x, [x, y^\varphi]\rangle$. Note that  $\gamma_2(L)^p\leq [N, N^{\varphi}, N]^p=[N, N, N^{\varphi}]^p\leq[N, N^{\varphi}]^p$ and 
$$\begin{array}{ccl}
    \gamma_p(L) & \leq & [N, N^{\varphi}, N,\ _{p-2}\ N]=[\gamma_{p-1}(N), N, N^{\varphi}]\\
     & \leq & [N^p, N, N^{\varphi}]=[N^p, N^{\varphi}, N^{\varphi}]\leq [N^p, N^{\varphi}, \nu(G)]
\end{array}$$
Considering all the elements $x, y\in N$, we deduce that
$$[N^p, N^{\varphi}]\leq [N, N^{\varphi}]^p[N^p, N^{\varphi}, \nu(G)].$$ 

Note that $[N^p, N^{\varphi}]$ and $[N, N^{\varphi}]^p$ are normal subgroups of $\nu(G)$. So, applying Lemma \ref{normalinc} to these normal subgroups we get $[N^p, N^{\varphi}] \leq [N, N^{\varphi}]^p$. Similarly we obtain $[N, (N^{\varphi})^p]\leq  [N, N^{\varphi}]^p$.

Therefore $$\gamma_{n+1}(N) \gamma_{n+1}(N^{\varphi}) [N^p, N^{\varphi}] [N,(N^p)^{\varphi})] \leq \gamma_2(N)^p\gamma_2(N^{\varphi})^p[N,N^{\varphi}]^p$$ and the proof is complete for $p\geq3$. \\

\noindent (c) Now consider $p=2$. Since $N$ is a potent $p$-group, by Lemma~\ref{lem.lower.potent} we have $\gamma_3(N) \leq \gamma_2(N)^4$ so $$\gamma_3(K)=\gamma_{3}(N)\gamma_{3}(N^{\varphi})[\gamma_2(N),N^{\varphi}][N,\gamma_2(N^{\varphi})]\leq \gamma_{2}(N)^4\gamma_{2}(N^{\varphi})^4[N^4,N^{\varphi}][N,(N^{\varphi})^4]$$

We need to prove that $[N^4, N^{\varphi}]\leq  [N, N^{\varphi}]^4$. Let $n, m\in N$, by Theorem~\ref{thm.Hall}, 
$$[n^4, m^\varphi] \equiv\ [n, m^\varphi]^4 (\text{mod}\ \gamma_2(L)^4\gamma_2(L)^2\gamma_4(L)),$$
where $L=\langle n, [n, m^\varphi]\rangle$. Note that  $\gamma_2(L)\leq [N, N^{\varphi}, N]=[N, N, N^{\varphi}]$, this implies that  
\begin{align*}
&\gamma_2(L)^{4} \leq \gamma_2(L)^2\leq[N, N, N^{\varphi}]^2\leq[N^4, N^{\varphi}]^2\\[2mm]
&\gamma_4(L) \leq [N, N, N^{\varphi}, L, L] \leq [N, N, N^{\varphi}, \nu(G), \nu(G)] \leq [N^4, N^{\varphi}, \nu(G)].
\end{align*}

Therefore $[n^4, m^{\varphi}] \in [N, N^{\varphi}]^4[N^4, N^{\varphi}, \nu(G)][N^4, N^{\varphi}]^2$. 
By commutators relations we can conclude that for each element $\alpha\in N^4$ and each $m\in N$  we have $[\alpha, m^{\varphi}] \in [N, N^{\varphi}]^4[N^4, N^{\varphi}, \nu(G)][N^4, N^{\varphi}]^2$, that is, 
$$[N^4, N^{\varphi}]\leq [N, N^{\varphi}]^4[N^4, N^{\varphi}, \nu(G)][N^4, N^{\varphi}]^2.$$ 

Since the subgroups $[N^4, N^{\varphi}]$ and $[N, N^{\varphi}]^4$ are normal in $\nu(G)$, we can apply Lemma~\ref{normalinc}, obtaining $[N^4, N^{\varphi}] \leq [N, N^{\varphi}]^4$. In the same way $[N, (N^{\varphi})^4]\leq  [N, N^{\varphi}]^4$.

Therefore $$\gamma_{3}(N)\gamma_{3}(N^{\varphi})[N^4,N^{\varphi}][N,(N^{\varphi})^4]\leq \gamma_2(N)^4\gamma_2(N^{\varphi})^4[N,N^{\varphi}]^4$$ and the proof is complete.
\end{proof}

\begin{cor}\label{s-lowercentral}
If $N$ is potent and $s \geq 2$, then the $s$-th term of the lower central series $\gamma_s(K)$ is potently embedded in $K$. 
\end{cor}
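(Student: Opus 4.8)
The plan is to verify the defining inequality of potent embedding directly. For $p$ odd this asks for $[\gamma_s(K),_{p-2}K]\leq\gamma_s(K)^p$, and for $p=2$ (where a potent $N$ is the same as a powerful $N$) for $[\gamma_s(K),K]\leq\gamma_s(K)^4$. Since $[\gamma_s(K),_{p-2}K]\leq\gamma_{s+p-2}(K)$ and $[\gamma_s(K),K]\leq\gamma_{s+1}(K)$, it is enough to prove the stronger inclusions $\gamma_{s+p-2}(K)\leq\gamma_s(K)^p$ for $p$ odd and $\gamma_{s+1}(K)\leq\gamma_s(K)^4$ for $p=2$, for every $s\geq 2$. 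This is what I would establish.

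I would expand both members with Proposition~\ref{propN}(a). Each of the four generating subgroups $\gamma_s(N)$, $\gamma_s(N^\varphi)$, $[\gamma_{s-1}(N),N^\varphi]$, $[N,\gamma_{s-1}(N^\varphi)]$ of $\gamma_s(K)$ is normal in $\nu(G)$ — the mixed ones because the conjugation relations of $\nu(G)$ send $[a,b^\varphi]$ to $[a^g,(b^g)^\varphi]$ and both $\gamma_{s-1}(N)$ and $N$ are normal in $G$ — so each of their $p$-th powers lies in $\gamma_s(K)^p$. Hence it suffices to put each of the four generating pieces of the higher term into $\gamma_s(K)^p$. The two homogeneous pieces are immediate from the potency of $N$ via Lemma~\ref{lem.lower.potent}: for $p$ odd $\gamma_{s+p-2}(N)=\gamma_{(p-1)+(s-1)}(N)\leq\gamma_s(N)^p\leq\gamma_s(K)^p$, the case $s=2$ being the defining inequality $\gamma_{p-1}(N)\leq N^p$; for $p=2$, $\gamma_{s+1}(N)\leq\gamma_s(N)^4\leq\gamma_s(K)^4$; and symmetrically on the $\varphi$-copy.

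The mixed pieces are the crux. Potency also gives $\gamma_{s+p-3}(N)\leq\gamma_{s-1}(N)^p$ (resp. $\gamma_s(N)\leq\gamma_{s-1}(N)^4$ for $p=2$), so everything comes down to the inclusion
\[
[\gamma_{s-1}(N)^p,N^\varphi]\leq[\gamma_{s-1}(N),N^\varphi]^p
\]
and its $\varphi$-symmetric companion. I would prove it by the argument used for $[N^p,N^\varphi]\leq[N,N^\varphi]^p$ in Proposition~\ref{propN}(b) (and for $[N^4,N^\varphi]\leq[N,N^\varphi]^4$ in Proposition~\ref{propN}(c) when $p=2$), with $\gamma_{s-1}(N)$ in the role of $N$. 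For $x\in\gamma_{s-1}(N)$, $y\in N$ and $L=\langle x,[x,y^\varphi]\rangle$, Hall's collection formula (Theorem~\ref{thm.Hall}) yields $[x^p,y^\varphi]\equiv[x,y^\varphi]^p$ modulo $\gamma_2(L)^p\gamma_p(L)$. As $[\gamma_{s-1}(N),N^\varphi]$ is normal in $\nu(G)$ one has $\gamma_2(L)\leq[\gamma_{s-1}(N),N^\varphi]$, so $\gamma_2(L)^p$ is absorbed into $[\gamma_{s-1}(N),N^\varphi]^p$. For $\gamma_p(L)$ one uses the identities of Lemma~\ref{basic.nu} to rewrite each of its basic commutators, each carrying at least one $\varphi$-entry, in the form $[w,N^\varphi]$ with $w\in\gamma_{p+s-2}(N)$; since potency gives $\gamma_{p+s-2}(N)=[\gamma_{p+s-3}(N),N]\leq[\gamma_{s-1}(N)^p,N]$, a final flip lands $\gamma_p(L)$ inside $[\gamma_{s-1}(N)^p,N^\varphi,\nu(G)]$. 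Thus $[\gamma_{s-1}(N)^p,N^\varphi]\leq[\gamma_{s-1}(N),N^\varphi]^p[\gamma_{s-1}(N)^p,N^\varphi,\nu(G)]$, and Lemma~\ref{normalinc} (applied to the normal subgroups $[\gamma_{s-1}(N)^p,N^\varphi]$ and $[\gamma_{s-1}(N),N^\varphi]^p$) gives the inclusion. Since $[\gamma_{s-1}(N),N^\varphi]^p\leq\gamma_s(K)^p$, every mixed piece sits in $\gamma_s(K)^p$, finishing the argument.

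The step I expect to be delicate is the bound on $\gamma_p(L)$ (resp. $\gamma_4(L)$ for $p=2$). Because the repeated entries of $L$ now come from $\gamma_{s-1}(N)$ rather than from $N$, one must first collapse the several $\varphi$-entries to a single one using $[\,\cdot\,,h^\varphi,x^\varphi]=[\,\cdot\,,h,x^\varphi]$ from Lemma~\ref{basic.nu}, then check that the resulting $N$-part has weight at least $p+s-2$ so that potency (Lemma~\ref{lem.lower.potent}) can extract the required $p$-th power, and finally move the $\varphi$-entry outward to reach $[\gamma_{s-1}(N)^p,N^\varphi,\nu(G)]$. Keeping every intermediate subgroup normal in $\nu(G)$ is precisely what makes the concluding appeal to Lemma~\ref{normalinc} legitimate; the remaining reduction of $[\gamma_{s-1}(N)^p,N^\varphi]$ to single generators $[x^p,y^\varphi]$ by commutator expansion is routine.
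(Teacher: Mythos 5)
Your proposal is correct, but it takes a genuinely different route from the paper. The paper proves the corollary by induction on $s$: the base case $s=2$ is exactly Proposition~\ref{propN}(b),(c), which gives $\gamma_p(K)\leq \gamma_2(K)^p$ (resp.\ $\gamma_3(K)\leq\gamma_2(K)^4$ for $p=2$); the inductive step is purely formal, never returning to the structure of $\nu(G)$: from $[\gamma_s(K),_{p-2}K]\leq\gamma_s(K)^p$ one gets $[\gamma_{s+1}(K),_{p-2}K]\leq[\gamma_s(K)^p,K]$, then Lemma~\ref{lem.hallformula} yields $[\gamma_s(K)^p,K]\leq \gamma_{s+1}(K)^p\,[\gamma_{s+1}(K),_{p-2}K,K]$, and Lemma~\ref{normalinc} closes the step. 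Thus the paper pays the price of the delicate $\nu(G)$-commutator calculus only once, at level $2$, and afterwards uses a general fact about finite $p$-groups. You instead work directly at every level: you expand $\gamma_{s+p-2}(K)$ via Proposition~\ref{propN}(a), kill the homogeneous pieces with Lemma~\ref{lem.lower.potent}, and re-run the Hall-formula argument of Proposition~\ref{propN}(b),(c) with $\gamma_{s-1}(N)$ in place of $N$ to get $[\gamma_{s-1}(N)^p,N^\varphi]\leq[\gamma_{s-1}(N),N^\varphi]^p$. This works, and it even yields that extra family of inclusions (a genuine generalization of Proposition~\ref{propN}(b)); the cost is that the bound on $\gamma_p(L)$ must be re-verified for each $s$, including the commutators with two or more entries from $[\gamma_{s-1}(N),N^\varphi]$, where Lemma~\ref{basic.nu}(iii) places a whole commutator $[x,y]$ in the $\varphi$-slot and one must check that the weight of the non-$\varphi$ part still reaches $p+s-2$ (it does: the margin is $(p-2)(s-2)\geq 0$). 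So: same toolkit (Hall's formula, Lemma~\ref{basic.nu}, Lemma~\ref{normalinc}), but direct generalization at each level versus the paper's one-shot base case plus formal induction.

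Two small corrections that do not affect the argument. First, your claim that all four generating subgroups of $\gamma_s(K)$ are normal in $\nu(G)$ fails for the homogeneous ones: $\gamma_s(N)$ is normal in $G$ but not in $\nu(G)$, since conjugation by $g^\varphi$ sends $n$ to $n[n,g^\varphi]$; this is harmless, because the conclusion you draw there (that $p$-th powers land in $\gamma_s(K)^p$) needs only the containment $\gamma_s(N)\leq\gamma_s(K)$, while normality is genuinely needed, and genuinely holds, only for the mixed subgroups entering Lemma~\ref{normalinc}. Second, for $s=2$ the homogeneous bound $\gamma_p(N)\leq\gamma_2(N)^p$ comes from Lemma~\ref{lem.lower.potent} with $k=1$, not from the defining inequality $\gamma_{p-1}(N)\leq N^p$; the defining inequality is what you use in the mixed piece at $s=2$, where your argument reduces verbatim to Proposition~\ref{propN}(b).
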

\begin{proof}
The proof is by induction on $s$. If $s=2$, by Proposition~\ref{propN} and by definition we have

$$\begin{array}{lcl}
[\gamma_2(K), _{p-2}K]=\gamma_p(K) & \leq &  \gamma_2(N)^p\gamma_2(N^{\varphi})^p[N,N^{\varphi}]^p\\
\ & \leq & (\gamma_2(N)\gamma_2(N^{\varphi})[N,N^{\varphi}])^p = \gamma_2(K)^p,\ \text{for}\ p\geq 3,
\end{array}$$

$$\begin{array}{lcl}
[\gamma_2(K), K]=\gamma_3(K) & \leq & \gamma_2(N)^4\gamma_2(N^{\varphi})^4[N,N^{\varphi}]^4\\
\ & \leq & (\gamma_2(N)\gamma_2(N^{\varphi})[N,N^{\varphi}])^4 = \gamma_2(K)^4, \ \text{for}\ p=2.  
\end{array}$$

This means that $\gamma_2(K)$ is potently embedded in $K$.

Suppose by induction hypothesis that $[\gamma_{s}(K), _{p-2}K] \leq \gamma_{s}(K)^p$, if $p\geq 3$, and $[\gamma_{s}(K), K] \leq \gamma_{s}(K)^4$, if $p=2$. Now Lemma \ref{lem.hallformula} yields  
$$\begin{array}{lcl}
[\gamma_{s+1}(K), _{p-2}K]& \leq & [\gamma_s(K)^p, K]\\
	    \ & \leq & [\gamma_s(K), K]^p[K,\  _p\ \gamma_s(K)]\\
	    \ & \leq & \gamma_{s+1}(K)^p[\gamma_{s+1}(K),_{p-2} K, K], \ \text{for} \  p\geq 3,
\end{array}$$

$$\begin{array}{lcl}
[\gamma_{s+1}(K), K]& \leq & [\gamma_s(K)^4, K]\\
	    \ & \leq & [\gamma_s(K), K]^4[K,\  _2\ \gamma_s(K)]^2[K, _4\gamma_s(K)]\\
	    \ & \leq & \gamma_{s+1}(K)^4[\gamma_{s+1}(K),  K]^2 [\gamma_{s+1}(K), K, K, K], \ \text{for} \ p=2.
\end{array}$$

Therefore, by Lemma \ref{normalinc} $[\gamma_{s+1}(K), _{p-2}K]\leq \gamma_{s+1}(K)^p$, if $p\geq 3$ and $[\gamma_{s+1}(K),K]\leq\gamma_{s+1}(K)^4$, if $p=2$, as desired.
\end{proof}

\begin{cor}\label{corN}
If $N$ is potent or $\gamma_p(N)=1$, then $\exp(K)$ divides $\p \cdot \exp(N)$. 
\end{cor}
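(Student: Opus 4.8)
The plan is to combine the potent power--commutator structure of $K$ furnished by Corollary~\ref{s-lowercentral} with Lemma~\ref{lem.exponent}, reducing the whole statement to the assertion that $K$ is generated by elements of order dividing $\exp(N)$. Write $\exp(N)=p^e$, so that $\exp(N)=\exp(N^{\varphi})=p^e$; since $\exp(K)$ and $\p\cdot p^e$ are both powers of $p$, it suffices to prove $\exp(K)\le \p\cdot p^e$, and this will be read off from the two displays at the end.

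First I produce the power--commutator hypothesis required by Lemma~\ref{lem.exponent}. When $N$ is potent, Corollary~\ref{s-lowercentral} says that every $\gamma_s(K)$ with $s\ge 2$ is potently embedded in $K$; applying this to $\gamma_p(K)$ gives, for $p$ odd,
\[
\gamma_{2(p-1)}(K)=[\gamma_p(K),_{p-2}K]\le\gamma_p(K)^{p},
\]
so Lemma~\ref{lem.exponent} applies with $k=2$, $r=p$, $s=1$ (the inequality $2(p-1)<p+(p-1)$ holds) and yields $\exp(\Omega_i(K))\le p^{\,i+1}$ for all $i$. For $p=2$, Proposition~\ref{propN}(c) together with Corollary~\ref{s-lowercentral} gives $\gamma_3(K)\le\gamma_2(K)^{4}=\gamma_2(K)^{2^2}$, which is the hypothesis with $k=3$, $r=2$, $s=2$, whence $\exp(\Omega_i(K))\le 2^{\,i+2}$. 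In the remaining case $\gamma_p(N)=1$, Proposition~\ref{propN}(a) forces $\gamma_{p+1}(K)=1$, so $K$ has class at most $p$; then $\gamma_{2(p-1)}(K)=1\le\gamma_2(K)^{p^2}$ feeds Lemma~\ref{lem.exponent} and gives the same bounds $p^{\,i+1}$ ($p$ odd), $2^{\,i+2}$ ($p=2$).

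It remains to show $K=\Omega_e(K)$, i.e. that $K$ is generated by elements of order dividing $p^e$. The subgroups $N$ and $N^{\varphi}$ consist of such elements, so the whole difficulty is to control the commutator generators $[n,g^{\varphi}]$ and $[g,n^{\varphi}]$ with $n\in N$, $g\in G$. Here I would use the commutator form of Hall's collection formula (Theorem~\ref{thm.Hall}): for $L=\langle n,[n,g^{\varphi}]\rangle$ one has $[n,g^{\varphi}]^{p^e}\equiv[n^{p^e},g^{\varphi}]=1$ modulo a product of terms $\gamma_{p^j}(L)^{p^{e-j}}$, and the identities of Lemma~\ref{basic.nu} together with the normality of $N$ give $[[n,g^{\varphi}],n]=[[n,g],n^{\varphi}]\in[N,N^{\varphi}]$, whence $\gamma_2(L)\le[N,N^{\varphi}]\unlhd\nu(G)$, so that the correction terms land in $\gamma_2(K)$. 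Converting ``$\in\gamma_2(K)$'' into ``$=1$'' is the heart of the matter: I would induct on the nilpotency class of $K$, working modulo the top term $\gamma_c(K)$ (which by Propositions~\ref{gammanu} and~\ref{propN} is assembled from $\gamma$-terms of $N$ and from $[\,\cdot\,,\cdot^{\varphi}]$ with $N$-entries), and use the potency of $N$ to trade the repeated $N$-entries in the surviving collection terms for genuine $p^{j}$-th powers via Lemma~\ref{lem.hallformula}, absorbing everything into $[N,G^{\varphi}]^{p^e}$ and removing it by Lemma~\ref{normalinc}. Equivalently, this step is the statement that $\exp([N,G^{\varphi}])$, $\exp([G,N^{\varphi}])$ and $\exp([N,N^{\varphi}])$ all divide $p^e=\exp(N)$, and it is exactly where the potency of $N$ (or $\gamma_p(N)=1$, which makes $N$ regular and forces the collection terms to vanish) is indispensable; this is the main obstacle.

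Granting $K=\Omega_e(K)$, the two halves combine immediately: $\exp(K)=\exp(\Omega_e(K))\le p^{\,e+1}=\p\cdot\exp(N)$ for $p$ odd, and $\exp(K)=\exp(\Omega_e(K))\le 2^{\,e+2}=\p\cdot\exp(N)$ for $p=2$, which is the claim. The genuinely delicate point is the generation statement of the previous paragraph; everything else is bookkeeping with Corollary~\ref{s-lowercentral} and Lemma~\ref{lem.exponent}.
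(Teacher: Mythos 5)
The first half of your argument is exactly the paper's: for $p$ odd and $N$ potent, Corollary~\ref{s-lowercentral} gives $\gamma_{2(p-1)}(K)=[\gamma_p(K),_{p-2}K]\leq\gamma_p(K)^p$ and Lemma~\ref{lem.exponent} (with $k=2$, $r=p$, $s=1$) yields $\exp(\Omega_i(K))\leq p^{i+1}$; for $\gamma_p(N)=1$ one uses $\gamma_{p+1}(K)=1$ from Proposition~\ref{propN}(a); and for $p=2$ one uses $\gamma_3(K)\leq\gamma_2(K)^4$ with $k=3$, $r=2$, $s=2$. All of that is correct. The genuine gap is the step you yourself flag as ``the heart of the matter'': you never prove $K=\Omega_e(K)$, you only sketch a plan, and the plan is misguided. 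You set out to show that each commutator generator $[n,g^{\varphi}]$ has order dividing $p^e$, i.e.\ that $\exp([N,G^{\varphi}])$, $\exp([G,N^{\varphi}])$ and $\exp([N,N^{\varphi}])$ divide $\exp(N)$. That is stronger than what the corollary itself asserts (under these hypotheses the paper only ever claims the bound $\p\cdot\exp(N)$, for all of $K$), so one cannot expect it to fall out of a soft collection argument; and indeed your computation stalls with correction terms lying in $\gamma_2(K)$, which you have no means to kill --- ``induct on the class and absorb via Lemma~\ref{lem.hallformula}'' is a hope, not a proof, and it is exactly where the real work would have to happen.

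The point you missed is that $K=\Omega_e(K)$ costs nothing, because $\Omega_e(K)$ is generated by \emph{all} elements of $K$ of order dividing $p^e$, so one is free to choose a convenient generating set. Since $[n,g^{\varphi}]=n^{-1}\,n^{g^{\varphi}}$ and $[g,n^{\varphi}]=\bigl((n^{\varphi})^{-1}\bigr)^{g}\,n^{\varphi}$, and since $[N,G^{\varphi}]$ and $[G,N^{\varphi}]$ are normal in $\nu(G)$ (Lemma~\ref{lem.general}(a)), the subgroup $K=NN^{\varphi}[N,G^{\varphi}][G,N^{\varphi}]$ is generated by the set of conjugates $N^{G^{\varphi}}\cup(N^{\varphi})^{G}$. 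Conjugation preserves orders, so every one of these generators has order dividing $p^e=\exp(N)$, and $K=\Omega_e(K)$ follows in one line --- the individual commutators $[n,g^{\varphi}]$ need not have order dividing $p^e$ at all, and nothing about their orders is needed. This is precisely how the paper argues; with this observation in place, your first half completes the proof.
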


\begin{proof}
Assume that $\exp(N)=p^e$. As $K$ is generated by $N^{G^{\varphi}}$ and  $(N^{\varphi})^G$, it follows that $K=\Omega_e(K)$.

Now let $p\geq 3$. On the one hand, if $N$ is potent, according to Corollary~\ref{s-lowercentral}, we conclude that $$\gamma_{2(p-1)}(K) = [\gamma_p(K),_{p-2}K] \leq \gamma_{p}(K)^p.$$ 

On the other hand, if $\gamma_p(N)=1$, by Proposition~\ref{propN} (a) we have $\gamma_{p+1}(K)=1$. So $$\gamma_{2(p-1)}(K) \leq \gamma_{p+1}(K)=1 \leq \gamma_p(K)^p.$$

In both cases we can apply Lemma~\ref{lem.exponent} with $k=2, r=p$ and $s=1$, obtaining $\exp(K) = \exp(\Omega_e(K))$ divides $p^{e+1}$. 

Analogously, for $p=2$ we have $\gamma_{3}(K) \leq \gamma_{2}(K)^4$, if $N$ is potent, and $\gamma_3(K)=1 \leq \gamma_2(K)^4$, if $\gamma_2(N)=1$. Then, applying Lemma~\ref{lem.exponent} with $k=3, r=2$ and $s=2$, we obtain that $\exp(K)=\exp(\Omega_e(K))$ divides $2^{e+2}$.
\end{proof}

The proof of Theorem \ref{thm.potent} is now easy. 

\begin{proof}[Proof of Theorem~\ref{thm.potent}]
(a) According to Lemma \ref{lem.general}~(b), we deduce that $\exp(\nu(G))$ divides $\exp(\nu(G/N)) \cdot \exp(K)$. 
By Corollary~\ref{corN}, $\exp(K)$ 
divides $\p\cdot \exp(N)$, which completes the proof. \\

\noindent (b) Arguing as in the previous paragraph, it is sufficient to prove that $\exp(K)$ divides $\exp(N)$. So, let $p\geq 5$ and suppose that $\gamma_{p-2}(N) \leq N^p$. Applying Proposition~\ref{propN} (b), for $n=p-2$ we have that $\gamma_{p-1}(K) \leq \gamma_2(K)^p$. In particular, $K$ is potent. By Lemma~\ref{lem.exponent} with $k=1$, $r=2$ and $s=1$ we obtain that $\exp(K)=\exp(\Omega_e(K))$ divides $\exp(N)$, since $K=\ker(\tilde{\pi}) = \Omega_e(K)$.
\end{proof}

Arguing as in the proof of the above result and using Lemma~\ref{lem.general} (c) and the fact that $\exp([N, G^{\varphi}][G, N^{\varphi}])$ divides $\exp(K)$, which in turn divides $\exp(N)$, we obtain the following:

\begin{cor}\label{cor.potent}
Let $N$ be a normal subgroup of a $p$-group $G$. 
\begin{itemize}
    \item[(a)] If $N$ is potent or $\gamma_{p}(N)=1$, then $\exp([G,G^{\varphi}])$ divides $\p\cdot \exp([G/N,(G/N)^{\varphi}])\cdot\exp(N)$.
    \item[(b)] If $\gamma_{p-2}(N) \leq N^p$,  then $\exp([G,G^{\varphi}])$ divides $\exp([G/N,(G/N)^{\varphi}])\cdot\exp(N)$. 
\end{itemize}
\end{cor}

A finite $p$-group $G$ is called regular if $x^py^p \equiv~(xy)^p \mod  H^p$, for every $x,y \in G$ and $H = H(x,y) = \langle x,y\rangle'$. It is well-known that if $G$ is a regular $p$-group and $G$ is generated by a set $X$, then $\exp(G) = \max\{|x| \mid x \in X\}$, where $|x|$ denotes the order of the element $x$ in the group $G$ (see \cite[1.2.13~(i)]{LM}).

\begin{proof}[Proof of Theorem \ref{corlog}]
Recall that $G$ is a $p$-group of nilpotency class $c$ and $n=\lceil \log_{p}(c+1)\rceil$. We prove by induction on $c$ that $\exp([G,G^{\varphi}])$ divides $\exp{(G)}^n$.

For any $c$ let $N=\gamma_{j}(G)$ and $H=[N,G^{\varphi}]\cdot [G,N^{\varphi}]$ where $j=\lceil \frac{c+1}{p}\rceil$. Then $H\leq \gamma_{j+1}(\nu(G))$ and hence $\gamma_p(H)\leq \gamma_{c+p+1}(\nu(G))\leq \gamma_{c+2}(\nu(G))=1$. In particular, $H$ is a regular $p$-group. Let $x\in N$ and $y^{\varphi}\in G^{\varphi}$. We will prove that if $\exp(N)=p^e$, then $H^{p^e}=1$. Applying Theorem \ref{thm.Hall} we have 
\[
[x^{p^e}, y^{\varphi}] \equiv [x,y^{\varphi}]^{p^e}  \pmod {\gamma_{2}(L)^{p^e}\gamma_{p}(L)^{p^{e-1}}\gamma_{p^2}(L)^{p^{e-2}}\ldots \gamma_{p^e}(L)},
\]
where $L= \langle x, [x,y^{\varphi}]\rangle$. On the one hand, $\gamma_p(L) \leq \gamma_{pj+1}(\nu(G)) \leq \gamma_{c+2}(\nu(G))=1$. If $p=2$, then $\gamma_2(L)=1$ and we obtain that $[x,y^{\varphi}]^{p^e}=1$. In particular, $H^{p^e}=1$

If $p$ is odd, then $[x,y^{\varphi}]^{p^e}  \in \gamma_{2}(L)^{p^e}$.  Since $[x,y^{\varphi},x]=[x,y,x^{\varphi}]\in [N,N^{\varphi}]$ we conclude that $[x,y^{\varphi}]^{p^e} \in \gamma_{2}(L)^{p^e} \leq [N,N^{\varphi}]^{p^e}$. Therefore, it is sufficient to prove that $[N,N^{\varphi}]^{p^e}=1$. 

Repeating the process with $a, b\in N$ we obtain that $[a,b^{\varphi}]^{p^e}\in [N',N^{\varphi}]^{p^e}$, so $H^{p^e}\leq [N,N^{\varphi}]^{p^e}\leq [N',N^{\varphi}]^{p^e} $. It is clear that using this process $p$-times eventually we obtain that $[N,N^{\varphi}]^{p^e}=1$, since $\gamma_p(N)=1$. Note that if $c \leq p-1$, then $N=G$ since $j=1$. In particular, $H=[G,G^{\varphi}]$ and $H^{\exp(G)}=1$. Thus, it remains to prove the case when $c\geq p$.

Now, Lemma \ref{lem.general} (c) implies that $\exp([G,G^{\varphi}])$ divides $$\exp([G/N,G^{\varphi}/N^{\varphi}]) \cdot \exp(H).$$
As $G/N$ has nilpotency class $\left\lceil\frac{c+1}{p}\right\rceil - 1$, by induction we obtain that $\exp([G/N,G^{\varphi}/N^{\varphi}])$ divides $\exp(G)^m$ where $m=\left\lceil \log_{p} \left\lceil\frac{c+1}{p}\right\rceil\right\rceil=\left\lceil \log_{p}(\frac{c+1}{p})\right\rceil=\left\lceil\log_{p}(c+1)\right\rceil-~1$. Therefore, $\exp([G,G^{\varphi}])$ divides $\exp(G)^m\cdot \exp(G)$ and 
the result follows.
\end{proof}

The previous result improve the bounds obtained \cite{APT,Moravec.Schur,Sambonet17} (cf. \cite[Theorem 6.5]{APT},   \cite[Section 2]{Moravec.Schur} and \cite[Theorem 1.1]{Sambonet17}).

\section{Applications: finite $p$-groups of fixed coclass}

Recall that the coclass of a $p$-group of order $p^n$ and nilpotency class $c$ is defined to be $r=n-c$. Finite $p$-groups of coclass 1 are also known as $p$-groups of maximal class. For $p=2$ these are known to be either dihedral, semidihedral or quaternion groups \cite[Corollary 3.3.4 (iii)]{LM}. 

Let $G$ be a $p$-group of maximal class of order greater than or equal to $p^4$. We define $G_1=C_G(\gamma_2(G)/\gamma_4(G))$. In other words $G_1$ consists of the elements $x\in G$ such that $[x,\gamma_2(G)]\leq \gamma_4(G)$.
It is well-known that the subgroup $G_1$ is a characteristic maximal subgroup of $G$. Another structural property about the subgroup $G_1$ is given in the next result. 

\begin{thm}\cite[Corollary 3.3.6]{LM}
If $G$ is a $p$-group of maximal class of order greater than or equal to $p^{p+2}$, then $\gamma_p(G)=G_1^p$.
\end{thm}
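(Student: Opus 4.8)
The statement is part of Blackburn's structure theory for $p$-groups of maximal class, and the plan is to prove it by combining the uniserial description of the lower central series with a passage to a regular quotient and P.~Hall's collection formula. First I would fix notation, writing $G_i=\gamma_i(G)$ for $i\geq 2$ and $|G|=p^n$. From the uniserial structure of maximal class groups one has $|G:G_2|=p^2$ and $|G_i:G_{i+1}|=p$ for $2\leq i\leq n-1$; moreover $G_1$ is a maximal subgroup containing $\Phi(G)=\gamma_2(G)$ with $|G_1:G_2|=p$. Since the hypothesis $|G|\geq p^{p+2}$ guarantees $n\geq p+2$, these indices give $|G:\gamma_p(G)|=p^p$, hence $|\gamma_p(G)|=p^{n-p}$ and $|G_1:\gamma_p(G)|=p^{p-1}$. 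The proof then splits into the inclusion $G_1^p\leq\gamma_p(G)$ and the reverse inclusion, the latter of which I would deduce from an order count once the former is in place.

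For the inclusion $G_1^p\leq\gamma_p(G)$ I would pass to the quotient $\overline{G}=G/\gamma_p(G)$. As $\overline{G}$ has nilpotency class at most $p-1<p$, it is regular, and it suffices to prove that the maximal subgroup $\overline{G_1}=G_1/\gamma_p(G)$ has exponent $p$, i.e. that $x^p\in\gamma_p(G)$ for every $x\in G_1$. Writing $G_1=\langle x_1\rangle G_2$ with $x_1\in G_1\setminus G_2$ and choosing uniserial generators $x_2,\dots$ of $G_2$, Corollary~\ref{cor.Hall} reduces the claim to the $p$-th powers of the individual generators, since
\[
(x_1\cdots x_r)^p\equiv x_1^p\cdots x_r^p \pmod{\gamma_2(L)^p\,\gamma_p(L)},\qquad L=\langle x_1,\dots,x_r\rangle,
\]
and the collection error $\gamma_2(L)^p\gamma_p(L)$ is pushed into $\gamma_p(G)$ by the commutator control on $G_1$ (the defining property of the two-step centralizer together with the positive degree of commutativity, $[G_1,G_i]\leq G_{i+2}$). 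The $p$-th powers of elements of $G_2$ lie in $\gamma_{p+1}(G)\leq\gamma_p(G)$ by another application of Theorem~\ref{thm.Hall}, and the power $x_1^p$ of the distinguished generator is forced into $\gamma_p(G)$ by the same commutator estimates; Lemma~\ref{normalinc} is then used to absorb the residual higher commutator terms. This yields $\overline{G_1}^{\,p}=1$, that is, $G_1^p\leq\gamma_p(G)$.

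For the reverse inclusion I would invoke regularity of $\overline{G}$: in a regular $p$-group one has $|\overline{G_1}:\overline{G_1}^{\,p}|=|\Omega_1(\overline{G_1})|$, and the exponent-$p$ conclusion of the previous step combined with $|G_1:\gamma_p(G)|=p^{p-1}$ pins down $|G_1^p|=p^{n-p}=|\gamma_p(G)|$; since $G_1^p\leq\gamma_p(G)$ is already known, equality follows. I expect the main obstacle to be the middle step, namely proving that $\overline{G_1}$ has exponent exactly $p$: this is the technical heart and rests on Blackburn's commutator estimates for maximal class groups, which are precisely what the order hypothesis $|G|\geq p^{p+2}$ secures and which fail for smaller groups (already for $|G|=p^3$ one has $G_1=G$ and the identity breaks down). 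Everything else is bookkeeping with the collection formula (Theorem~\ref{thm.Hall}, Corollary~\ref{cor.Hall}) and repeated use of Lemma~\ref{normalinc}.
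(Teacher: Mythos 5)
A preliminary remark: the paper does not prove this statement at all --- it is imported verbatim from \cite[Corollary 3.3.6]{LM} (Blackburn's theory of maximal class groups) and is only \emph{used}, in the proof of Corollary~\ref{cor.maximal}. So there is no in-paper argument to compare yours against, and your proposal must stand on its own; it does not, in either direction. For the inclusion $G_1^p\leq\gamma_p(G)$: once you pass to $\overline{G}=G/\gamma_p(G)$, regularity alone (the fact quoted in the paper before the proof of Theorem~\ref{corlog}, that a regular group generated by $X$ has exponent $\max\{|x|:x\in X\}$) reduces everything to showing that the uniserial generators $x_1,x_2,\dots$ of $G_1$ satisfy $x_i^p\in\gamma_p(G)$; your detour through Corollary~\ref{cor.Hall} adds nothing. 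But that statement about generators is precisely the content of the theorem, and you never prove it. The justification offered --- that ``the $p$-th powers of elements of $G_2$ lie in $\gamma_{p+1}(G)$ by another application of Theorem~\ref{thm.Hall}'' --- cannot work: the collection formula constrains the power of a \emph{product} or of a \emph{commutator} relative to powers of its constituents; it says nothing whatsoever about the $p$-th power of a single element. The inclusion $G_2^p\leq\gamma_{p+1}(G)$ is an instance of Blackburn's $\gamma_i(G)^p=\gamma_{i+p-1}(G)$, a statement of exactly the same depth as the one being proved, so invoking it is circular; the same objection applies to absorbing $\gamma_2(L)^p$ into $\gamma_p(G)$. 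The genuine argument needs the hypothesis $|G|\geq p^{p+2}$ quantitatively: positive degree of commutativity (Blackburn) gives the uniform-element property $C_G(g)\leq G_1$ for every $g\in\gamma_j(G)\setminus\gamma_{j+1}(G)$ with $2\leq j\leq n-2$; since $s$ centralizes $s^p$, every $s\in G\setminus G_1$ then has $s^p\in\gamma_{n-1}(G)\leq\gamma_p(G)$; and only at that point does Theorem~\ref{thm.Hall}, applied to $s$ and $sx$ with $x\in\gamma_i(G)$, yield $\gamma_i(G)^p\leq\gamma_{i+1}(G)^p\gamma_p(G)$ and hence, descending the chain, $G_1^p\leq\gamma_p(G)$. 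None of this mechanism is present in your sketch; the ``commutator estimates'' are name-dropped but never converted into power information.

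The reverse inclusion is not merely incomplete but invalid as stated. Your inference is: $\overline{G}$ regular, $\overline{G_1}$ of exponent $p$, and $|G_1:\gamma_p(G)|=p^{p-1}$ together ``pin down'' $|G_1^p|=p^{n-p}$. Take $p$ odd and $G=C_p\wr C_p$, of order $p^{p+1}\geq p^4$ and maximal class, for which $G_1$ is the elementary abelian base group: all three inputs hold, yet $G_1^p=1$ while $|\gamma_p(G)|=p$. The reason is that exponent $p$ of $\overline{G_1}$ says exactly $G_1^p\leq\gamma_p(G)$ and nothing more; the regular-group identity $|H:\Omega_1(H)|=|H^p|$ applied to $H=\overline{G_1}$ reads $1=1$ and provides no lower bound on $|G_1^p|$. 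To run an order count one would have to apply that identity to $G_1$ itself, but nothing shows $G_1$ is regular (its class is not bounded by $p-1$ in general), and one would additionally need an upper bound on $|\Omega_1(G_1)|$ --- which is again the hard core of Blackburn's theory. Note that your counting step never uses $|G|\geq p^{p+2}$ at all, which is exactly why the wreath product defeats it; any correct proof must make that hypothesis do real work in both inclusions. Given that the paper only needs the result as a black box, citing \cite{LM} is the right move; a self-contained proof would amount to redeveloping the positive-degree-of-commutativity machinery, not to the bookkeeping with Theorem~\ref{thm.Hall}, Corollary~\ref{cor.Hall} and Lemma~\ref{normalinc} that you describe.
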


More information on $p$-groups of maximal class can be found in \cite[Chapter 3]{LM}. We are now in a position to prove Corollary~\ref{cor.maximal}.

\begin{proof}[Proof of Corollary \ref{cor.maximal}]
First of all,  we prove that $G$ has a potent maximal subgroup or a maximal subgroup of class at most $p-1$. If $p=2$, then $G$ has a cyclic maximal subgroup. Thus we can assume that $p$ is odd.

On the one hand, if $|G|\leq p^{p+1}$, then its maximal subgroups have order at most $p^p$ and then nilpotency class at most $p-1$.

On the other hand, assume that $|G|\geq p^{p+2}$. In this case $[G_1,G_1]=[G_1,\gamma_2(G)]$, since $|G_1:\gamma_2(G)|=p$. Thus, as $[G_1,G_1]\leq \gamma_4(G)$, it follows that $$\gamma_{p-1}(G_1)=[[G_1,G_1],\ _{p-3}\ G_1]\leq [\gamma_4(G),\ _{p-3}\ G_1]\leq \gamma_p(G)=G_1^p,$$ and $G_1$ is a potent maximal subgroup.

Now, by Theorem~\ref{thm.potent} and Corollary~\ref{corN} we obtain that $\exp(\nu(G))$ divides $\p\cdot\exp(\nu(C_p))\cdot \exp(G)=\p^2\cdot \exp(G)$, since $\exp(\nu(C_p))=4$ if $p=2$ or $p$ if $p$ is odd.
\end{proof}

The following result is an immediate consequence of Corollary \ref{cor.maximal}. 

\begin{cor}
Let $p$ be a prime and $G$ a $p$-group of maximal class. Then $\exp(\mu(G))$ and $\exp([G,G^{\varphi}])$ divide $\p^2 \cdot \exp(G)$.
\end{cor}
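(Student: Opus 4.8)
The plan is to observe that both $\mu(G)$ and $[G,G^{\varphi}]$ are subgroups of $\nu(G)$, so that each of their exponents divides $\exp(\nu(G))$, and then to invoke Corollary~\ref{cor.maximal}.

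First I would locate the two groups inside $\nu(G)$. By construction we identify the non-abelian tensor square $G\otimes G$ with the subgroup $[G,G^{\varphi}]$ of $\nu(G)$, so the inclusion $[G,G^{\varphi}]\leq\nu(G)$ is immediate. For $\mu(G)$ I would appeal to the description recorded in Remark~\ref{rem:nu(G)}, namely $\mu(G)=\Theta(G)\cap[G,G^{\varphi}]$; this exhibits $\mu(G)$ as a subgroup of $[G,G^{\varphi}]$, and hence $\mu(G)\leq[G,G^{\varphi}]\leq\nu(G)$. Thus both groups sit inside $\nu(G)$.

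Next I would use the elementary fact that the order of any element of a subgroup coincides with its order in the ambient group, whence the exponent of a subgroup always divides the exponent of the whole group. Applying this to the inclusions above yields that $\exp(\mu(G))$ and $\exp([G,G^{\varphi}])$ both divide $\exp(\nu(G))$.

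Finally I would apply Corollary~\ref{cor.maximal}, which gives $\exp(\nu(G))\mid\p^2\cdot\exp(G)$ for a $p$-group $G$ of maximal class, and combine it with the previous divisibility to conclude that $\exp(\mu(G))$ and $\exp([G,G^{\varphi}])$ divide $\p^2\cdot\exp(G)$. There is no genuine obstacle here, since the statement is an immediate consequence of Corollary~\ref{cor.maximal}; the only point deserving attention is the correct placement of $\mu(G)$ as a subgroup of $\nu(G)$, for which the identity $\mu(G)=\Theta(G)\cap[G,G^{\varphi}]$ from Remark~\ref{rem:nu(G)} is exactly what is needed.
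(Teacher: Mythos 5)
Your proposal is correct and matches the paper's approach: the paper states this result as an immediate consequence of Corollary~\ref{cor.maximal}, precisely because $\mu(G)$ and $[G,G^{\varphi}]$ are subgroups of $\nu(G)$, so their exponents divide $\exp(\nu(G))$. Your identification of $\mu(G)=\Theta(G)\cap[G,G^{\varphi}]$ via Remark~\ref{rem:nu(G)} is exactly the right justification for placing $\mu(G)$ inside $\nu(G)$.
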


Let $p$ be a prime and $r$ an integer we define the integer $m(p,r)$ by $m(p,r)=(p-1)p^{r-1}$ for $p$ odd and $m(2,r)=2^{r+2}$. It is well-known that if $G$ is a $p$-group of coclass $r$, then $\gamma_{m(p,r)}(G)$ is powerful, see for instance \cite[Theorem 6.3.1 and 6.3.2]{LM}. Recall that $d(\gamma_m(G))$ is the minimal cardinality for a generating set of $\gamma_m(G)$. Moreover, we have the following two results.

\begin{thm}\cite[Theorem 6.3.8]{LM}\label{coclass2}
Let $G$ be a finite $2$-group of coclass $r$ and nilpotency class $c$ and let $m=m(2,r)$ and $s=d(\gamma_m(G))$. If $c\geq 2^{r+3}$, then the following hold:
\begin{itemize}
    \item[(a)] $\gamma_i(G)^2=\gamma_{i+s}(G)$ for all $i\geq m$;
    \item[(b)] $s=2^d$ with $0\leq d\leq r+1$.
\end{itemize}
\end{thm}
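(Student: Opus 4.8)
The plan is to derive the statement from the coclass theory of pro-$2$ groups, realising $G$ as a quotient of a $2$-adic space group and reading off the power–commutator behaviour from the uniserial action on its translation lattice. First I would invoke the coclass theorem (Theorem~D of Leedham-Green): there are only finitely many infinite pro-$2$ groups of coclass $r$, and any such $S$ has a maximal abelian normal subgroup $T$—the \emph{translation lattice}—which is a free $\mathbb{Z}_2$-module of finite rank, with finite point group $P=S/T$ acting faithfully on $T$. Since $G$ has coclass $r$ and very large class $c\ge 2^{r+3}$, the classification places $G$ close to one such $S$; concretely $G$ is commensurable with a quotient $S/\gamma_{c+1}(S)$, and the lower central terms $\gamma_i(G)$ with $i\ge m=m(2,r)=2^{r+2}$ correspond to terms of $S$ lying inside $T$.

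The crucial structural input is that the action of $P$ on $T$ is \emph{uniserial}: once the lower central series enters $T$, each successive quotient $\gamma_i(S)/\gamma_{i+1}(S)$ has order $2$, so the filtration descends through $T$ one step at a time. If $T\cong\mathbb{Z}_2^{s}$ and $\gamma_m(S)$ meets $T$ as the top of a uniserial chain $T=T_0\supset T_1\supset\cdots$ with each $T_j/T_{j+1}\cong\mathbb{Z}/2$, then $T/2T\cong(\mathbb{Z}/2)^{s}$ forces $2T=T_{s}$. Translating back, squaring shifts the filtration down by exactly $s$ uniserial steps, i.e.\ $\gamma_i(S)^2=\gamma_{i+s}(S)$ for $i\ge m$. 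Transferring this to $G$ gives part~(a): the hypothesis $c\ge 2^{r+3}$ is precisely what guarantees that the whole range $i\ge m$ sits safely inside the periodic tail, away from the finitely many exceptional terms near the top and the truncation at $\gamma_{c+1}$.

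For part~(b) I would use the module-theoretic description of $T$. The point group acts through an element of $2$-power order, so $T\otimes\mathbb{Q}_2$ is a faithful module over $\mathbb{Q}_2(\zeta_{2^{d+1}})$ for some $d\ge 0$, a cyclotomic field of degree $\varphi(2^{d+1})=2^{d}$; hence the uniserial period $s$, being the $\mathbb{Z}_2$-rank of the relevant lattice, equals $2^{d}$. The bound $d\le r+1$ then follows because the coclass $r$ controls the order of the point group and thereby the size of the cyclotomic extension that can occur: a larger $d$ would force $|P|$, and with it the coclass, to exceed what $r$ permits. Since $s=d(\gamma_m(G))$ is exactly the rank of this lattice, one obtains $s=2^{d}$ with $0\le d\le r+1$.

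The main obstacle I anticipate is not the periodic identity itself but the \textbf{transfer} of the clean statements from the infinite pro-$2$ group $S$ to the finite group $G$. The finite quotient can differ from $S$ both at the top (the terms before $\gamma_m$) and at the bottom (the truncation defining $G$), and one must verify that the inequality $c\ge 2^{r+3}$ genuinely buffers the range $i\ge m$ from both discrepancies; this is the delicate point. The computation of $s=2^{d}$ and the bound $d\le r+1$ then rely on the classification of the finitely many pro-$2$ groups of coclass $r$ together with the admissible $\mathbb{Z}_2[P]$-module structures on their translation lattices.
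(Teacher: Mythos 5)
The paper never proves this statement: it is quoted verbatim from \cite[Theorem 6.3.8]{LM}, so your attempt has to be measured against the proof given there (and reproduced in the coclass literature), which is \emph{intrinsic} to the finite group $G$ and does not pass through infinite pro-$2$ groups at all. Your route — classify the infinite pro-$2$ groups of coclass $r$, read off periodicity from the uniserial action on the translation lattice of $S$, then transfer to $G$ — founders exactly at the step you flag as delicate. The finiteness theorem you invoke does not make $G$ a quotient of any such $S$; ``commensurable'' carries no content for finite groups; and the correct statement (which rests on the much deeper constructibility/periodicity theory for $p=2$, not on the finiteness of pro-$2$ groups of coclass $r$) only says that $G$ agrees with a main-line quotient $S/\gamma_k(S)$ down to boundedly many layers above the \emph{bottom} of its lower central series. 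This is fatal for your argument: the hypothesis $c\geq 2^{r+3}$ buffers the top of the series (it guarantees that $\gamma_m$ lies in the ``lattice region''), but the discrepancy between $G$ and $S$ lives in the bottom layers, and the identity $\gamma_i(G)^2=\gamma_{i+s}(G)$ must be proved there as well — for $i$ close to $c$ it asserts, nontrivially, that the last terms have exponent $2$. Nothing in your sketch addresses this, nor the compatibility of the identification with taking squares and with the invariant $s=d(\gamma_m(G))$.

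Part (b) has a separate gap: the point group of a uniserial $2$-adic space group need not act through a cyclic group, so ``$T\otimes\mathbb{Q}_2$ is a faithful $\mathbb{Q}_2(\zeta_{2^{d+1}})$-module'' is not automatic; the facts that uniserial $2$-adic lattices have $\mathbb{Z}_2$-rank $2^d$ and that $d\leq r+1$ are themselves theorems of the uniserial-module theory, and ``a larger $d$ would force $|P|$ to exceed what $r$ permits'' is not an argument (the coclass bounds the dimension through the uniserial filtration, not through the order of the point group). For contrast, the actual proof runs as follows: by \cite[Theorems 6.3.1 and 6.3.2]{LM} the subgroup $\gamma_m(G)$ is powerful, and since $c\geq 2^{r+3}$ the group $G$ acts uniserially on $\gamma_m(G)$ — every quotient $\gamma_i(G)/\gamma_{i+1}(G)$ with $i\geq m$ has order $2$, and every normal subgroup of $G$ contained in $\gamma_m(G)$ equals some $\gamma_j(G)$. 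Powerfulness gives $|\gamma_i(G):\gamma_i(G)^2|=2^{d(\gamma_i(G))}$, and since $\gamma_i(G)^2$ is normal in $G$, uniseriality forces $\gamma_i(G)^2=\gamma_{i+s}(G)$ with $s=d(\gamma_i(G))$ constant along the chain; part (b) then follows from the dimension theory of uniserial actions of finite $2$-groups. Salvaging your approach would require the full periodicity machinery plus a genuine argument for the bottom layers — considerably more than the intrinsic proof needs.
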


\begin{thm}\cite[Theorem 6.3.9]{LM}\label{coclassp}
Let $G$ be a finite $p$-group of coclass $r$ and nilpotency class $c$ for $p$ odd, and let $m=m(p,r)$ and $s=d(\gamma_m(G))$. If $c\geq 2p^{r}$, then the following hold:
\begin{itemize}
    \item[(a)] $\gamma_i(G)^p=\gamma_{i+s}(G)$ for all $i\geq m$;
    \item[(b)] $s=(p-1)p^d$ with $0\leq d\leq r-1$.
\end{itemize}
\end{thm}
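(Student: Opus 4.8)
The plan is to exploit the deep structure theory of $p$-groups of fixed coclass, reducing the statement to an arithmetic computation in a cyclotomic lattice. The starting point is the uniserial behaviour of the lower central series far down: for $c\geq 2p^{r}$ the hypothesis $i\geq m=(p-1)p^{r-1}$ places us in the ``periodic tail'' of $G$, where each factor $\gamma_i(G)/\gamma_{i+1}(G)$ is cyclic of order $p$ and $\gamma_m(G)$ is powerful. Since $\gamma_m(G)$ is powerful, Lemma \ref{power} gives $\Pi_k(\gamma_m(G))=\gamma_m(G)^{p^{k}}$ for all $k$, so controlling a single $p$-th power of $\gamma_m(G)$ controls all of them; thus it suffices to locate $\gamma_m(G)^p$ inside the lower central series and to identify the shift $s$.

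First I would invoke the coclass structure theorem (the solution of the coclass conjectures, developed in the chapters of \cite{LM} preceding this result): for $c\geq 2p^{r}$ the tail $T=\gamma_m(G)$ is, up to bounded error, the translation subgroup of a $p$-adic space group of coclass $r$, an abelian $\mathbb{Z}_p$-lattice on which a ``uniserial'' element $\sigma\in G$ acts. The crucial point is that $\sigma$ acts on $T$ as a primitive $p^{d+1}$-th root of unity $\zeta$ for some $d\geq 0$, making $T$ a module over $\mathbb{Z}_p[\zeta]\cong\mathbb{Z}_p[x]/(\Phi_{p^{d+1}}(x))$. Under this identification the filtration $\gamma_m(G)\supseteq\gamma_{m+1}(G)\supseteq\cdots$ coincides with the $(\zeta-1)$-adic filtration $T\supseteq(\zeta-1)T\supseteq(\zeta-1)^2T\supseteq\cdots$, each step of index $p$ because the residue field is $\mathbb{F}_p$; this recovers uniseriality and shows that $d(\gamma_m(G))$ equals the $\mathbb{Z}_p$-rank of $T$.

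The arithmetic heart is then the ramification identity in the cyclotomic ring. Since $p$ is totally ramified in $\mathbb{Z}_p[\zeta]$ with uniformizer $(\zeta-1)$ and ramification index
\[
e=\deg\Phi_{p^{d+1}}=\phi(p^{d+1})=(p-1)p^{d},
\]
and since $\Phi_{p^{d+1}}(1)=p$, we have $(\zeta-1)^{e}=p\cdot u$ for a unit $u$, whence $(\zeta-1)^{e}T=pT$. Setting $s=e=(p-1)p^{d}$ and translating back through the filtration correspondence yields, for every $i\geq m$,
\[
\gamma_{i+s}(G)=(\zeta-1)^{s}\,(\text{layer }i)=p\cdot(\text{layer }i)=\gamma_i(G)^{p},
\]
which is (a); comparing $\mathbb{Z}_p$-ranks gives $s=d(\gamma_m(G))$, consistent with the statement. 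For (b), the shape $s=(p-1)p^{d}$ is forced by the cyclotomic degree, while the bound $0\leq d\leq r-1$ comes from a coclass count on the space group: a translation lattice of rank $(p-1)p^{d}$ together with a uniserial element of order $p^{d+1}$ produces, already in the purely cyclic realization $\mathbb{Z}_p[\zeta]\rtimes\langle\sigma\rangle$, a group whose finite quotients have coclass exactly $d+1$, so in general the coclass is at least $d+1$ and hence $d\leq r-1$.

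The main obstacle is the first step: realizing $\gamma_m(G)$ as a cyclotomic translation lattice carrying a root-of-unity action is precisely the content of the coclass theorems, which rest on the classification of infinite pro-$p$-groups of finite coclass and are far from elementary. Once that module structure is in hand, the remainder — the ramification identity $(\zeta-1)^{(p-1)p^{d}}=p\cdot(\text{unit})$ and the bookkeeping that converts it into $\gamma_i(G)^{p}=\gamma_{i+s}(G)$ — is routine. The threshold $c\geq 2p^{r}$ is exactly what guarantees that $G$ has descended into this periodic regime, so that the infinite pro-$p$ picture governs the tail without error terms.
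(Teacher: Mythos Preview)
The paper does not prove this theorem at all: it is quoted verbatim as \cite[Theorem 6.3.9]{LM} and used as a black box in the proof of Theorem~\ref{thm.cc}, so there is no ``paper's own proof'' to compare against. Your sketch is therefore not competing with anything in the paper; it is an outline of how the cited result is established in Leedham-Green--McKay, and in broad strokes it is faithful to that source --- the uniserial tail, the translation lattice of a $p$-adic space group, the $\mathbb{Z}_p[\zeta]$-module structure, and the ramification identity $(\zeta-1)^{(p-1)p^d}\sim p$ are indeed the ingredients behind Theorem~6.3.9.

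That said, as a self-contained argument your write-up has a real gap at the point you yourself flag: the assertion that $\gamma_m(G)$ carries the structure of a cyclotomic $\mathbb{Z}_p$-lattice with a root-of-unity action is not something one can simply ``invoke'' --- it is the output of several chapters of \cite{LM} (uniseriality, the classification of uniserial space groups, and the passage from the finite group to its infinite pro-$p$ limit), and the bound $c\geq 2p^r$ is what makes that passage exact rather than approximate. Also, your coclass count for part~(b) is a bit loose: the inequality $d+1\leq r$ comes from comparing the rank $s=(p-1)p^d$ of the lattice with the dimension bound for uniserial space groups of coclass $r$, not merely from the cyclic realization you mention. None of this is wrong in spirit, but if you were asked to supply a proof rather than a citation, each of these steps would need to be filled in from \cite{LM} rather than asserted.
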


We are now in a position to prove Theorem \ref{thm.cc}. 

\begin{proof}[Proof of Theorem~\ref{thm.cc}]
Let $m=m(p,r)$ and consider the quotient group $\bar{G}=G/\gamma_{m+1}(G)$. By Theorem \ref{corlog} we have that $\exp ([\bar{G},\bar{G}^{\varphi}])$ divides $\exp(\bar{G})^n$ where $n=\lceil \log_{p}(m+1)\rceil$. If $p$ is odd, then  $n\leq \lceil \log_p(p^r)\rceil = r$ as $m+1 \leq p^r$. If $p=2$, then $n=r+3$. By Lemma \ref{lem.general} the kernel of the canonical epimorphism $\widetilde{\pi}: \nu(G) \to \nu(\overline{G})$ is the subgroup $\gamma_{m+1}(G)\gamma_{m+1}(G)^{\varphi}[\gamma_{m+1}(G),G^{\varphi}][G,\gamma_{m+1}(G)^{\varphi}]$ which is contained in $\gamma_{m+1}(\nu(G))$ by Proposition~\ref{gammanu}. Now, applying Theorem \ref{thmA}  we have $\exp(\gamma_{m+1}(\nu(G)))\leq \exp(\gamma_{m}(G))$.
\end{proof}

\section*{Acknowledgements}
The work of the first and the second authors were supported by DPI/UnB and FAPDF-Brazil. The third author was supported by CNPq-Brazil. The last author was supported by the ``National Group for Algebraic and Geometric Structures, and their Applications" (GNSAGA - INdAM). The last author is also grateful to the Department of Mathematics of the University of Brasilia for its hospitality and support, while this investigation was carried out. 
Finally, the authors are very grateful to the referees who have carefully read the manuscripts and pointed out several mistakes and typographical errors. Moreover, their insightful comments were valuable for the improvement of this new version.  

\bibliographystyle{plain}

\end{document}